\definecolor{gr}{rgb}   {0.,   0.69,   0.23 }
\definecolor{bl}{rgb}   {0.,   0.5,   1. }
\definecolor{mg}{rgb}   {0.85,  0.,    0.85}
\definecolor{yl}{rgb}   {0.8,  0.7,   0.}
\definecolor{or}{rgb}  {0.7,0.2,0.2}
\newtheorem{theorem}{Theorem} [section]
\newtheorem{lemma}[theorem]{Lemma}
\newtheorem{proposition}[theorem]{Proposition}
\newtheorem{remark}[theorem]{Remark}
\newtheorem{definition}[theorem]{Definition}
\newtheorem{oldtheorem}{Theorem}
\DeclareMathOperator*{\intt}{\int}
\DeclareMathOperator{\tr}{tr}
\DeclareMathOperator{\Id}{{\bf Id}}
\newcommand{\1}{\hspace{0.5mm}\textup{I}\hspace{0.5mm}}
\newcommand{\II}{\textup{I \hspace{-2.8mm} I} }
\newcommand{\noi}{\noindent}
\newcommand{\Z}{\mathbb{Z}}
\newcommand{\R}{\mathbb{R}}
\newcommand{\T}{\mathbb{T}}
\newcommand{\bul}{\bullet}
\let\P= \undefined
\newcommand{\P}{\mathbf{P}}
\newcommand{\EE}{\mathcal{E}}
\renewcommand{\H}{\mathcal{H}}
\renewcommand{\L}{\mathcal{L}}
\newcommand{\Nf}{\mathfrak{N}}
\newcommand{\NB}{\mathbb{N}}
\newcommand{\ze}{\zeta}
\newcommand{\al}{\alpha}
\newcommand{\be}{\beta}
\newcommand{\dl}{\delta}
\newcommand{\eps}{\varepsilon}
\newcommand{\kk}{\kappa}
\newcommand{\G}{\Gamma}
\newcommand{\s}{\sigma}
\newcommand{\ft}{\widehat}
\newcommand{\Ft}{{\mathcal{F}}}
\newcommand{\wt}{\widetilde}
\newcommand{\dx}{\partial_x}
\newcommand{\dt}{\partial_t}
\newcommand{\LRA}{\Longrightarrow}
\newcommand{\ta}{\theta}
\renewcommand{\l}{\ell}
\newcommand{\Gdl}{\mathcal{G}_{\dl} }
\newcommand{\Tdl}{\mathcal{T}_{\dl} }
\newcommand{\Qdl}{\mathcal{Q}_{\dl}}
\newcommand{\Gd}{\wt{\mathcal{G}}_\dl}
\newcommand{\Tfr}{\mathfrak{T} }
\newcommand{\les}{\lesssim}
\newcommand{\ges}{\gtrsim}
\newcommand{\jb}[1]
{\langle #1 \rangle}
\newcommand{\ind}{\mathbf 1}
\newcommand{\gf}{\mathfrak{g}}
\newcommand{\M}{\mathcal{M}}
\newcommand{\N}{\mathbb{N}}
\newcommand{\NN}{\mathcal{N}}
\newcommand{\xii}{\pmb{\xi}}
\newcommand{\pb}{\mathfrak{p}}
\newcommand{\sbb}{\mathfrak{s}}
\newcommand{\uu}{\mathbf{u}}
\newcommand{\vv}{\mathbf{v}}
\newcommand{\vvd}{\mathbf{v}_\delta}
\newcommand{\ftvvd}{\ft {\mathbf{v}}_\delta}
\newcommand{\KDV}{\text{\rm KdV} }
\newcommand{\too}{\longrightarrow}
\newcommand{\TT}{\mathcal{T}}
\newcommand{\If}{\mathfrak{I}}
\newtheorem*{ackno}{Acknowledgements}
\numberwithin{equation}{section}
\numberwithin{theorem}{section}
\begin{document}
\baselineskip = 14pt

\title[Shallow-water convergence of ILW in  $L^2$]
{Shallow-water convergence of the intermediate long wave equation in $L^2$}

\author[A.~Chapouto, G.~Li, T.~Oh, and T.~Zhao]
{Andreia Chapouto, 
Guopeng Li,  
Tadahiro Oh, 
and
Tengfei Zhao}

\address{
Andreia Chapouto\\
CNRS, Laboratoire de math\'ematiques de Versailles, UVSQ, Universit\'e Paris-Saclay, CNRS, 45 avenue des 
\'Etats-Unis, 78035 Versailles Cedex, France}

\email{andreia.chapouto@uvsq.fr}

\address{Guopeng Li, 
School of Mathematics and Statistics, Beijing Institute of Technology, Beijing 100081, China}

\email{guopeng.li@bit.edu.cn}

\address{
Tadahiro Oh, 
School of Mathematics\\
The University of Edinburgh\\
and The Maxwell Institute for the Mathematical Sciences\\
James Clerk Maxwell Building\\
The King's Buildings\\
Peter Guthrie Tait Road\\
Edinburgh\\
EH9 3FD\\
 United Kingdom\\
and School of Mathematics and Statistics, Beijing Institute of Technology, Beijing 100081, China}


\email{hiro.oh@ed.ac.uk}

\address{Tengfei Zhao, 
 School of Mathematics and Physics, University of Science and Technology
Beijing, Beijing, 100083, China}

\email{zhao.tengfei@ustb.edu.cn}

\subjclass[2020]{35Q35, 35Q53, 37K10, 76B55}

\keywords{intermediate long wave equation;  Korteweg-de Vries equation; 
complete integrability; normal form reduction}

\begin{abstract}

We continue our study on the convergence issue 
of the intermediate long wave
equation (ILW)   on both the real line and  the circle.
In particular, we establish
 convergence
of the scaled ILW dynamics to that 
of the Korteweg-de Vries equation (KdV)
in the shallow-water limit
at the $L^2$-level.
Together with the recent work by the first three authors and D.\,Pilod (2024)
on the deep-water convergence in $L^2$, 
this work completes the well-posedness
and convergence study of ILW
on both geometries
within the $L^2$-framework.
Our proof equally applies to both geometries
and is based
on the following two ingredients:
the complete integrability of ILW
and 
the 
normal form method.
More precisely, by making use of 
the Lax pair structure and the perturbation determinant
for ILW, recently introduced by 
Harrop-Griffths, Killip, and Vi\c{s}an (2025), 
we first establish weakly uniform (in small depth parameters) equicontinuity in $L^2$ of 
solutions to the scaled ILW, providing a control on the high frequency part of solutions.
Then, we treat the low frequency part by implementing a perturbative argument based on 
an infinite iteration of normal form reductions
for KdV.

\end{abstract}

%
\maketitle
%


\tableofcontents


\newpage
\section{Introduction}
\label{SEC:1}

\subsection{Intermediate long wave equation}
\label{SUBSEC:1.1}

In this paper, we study  the following intermediate long wave equation (ILW)
on $\M = \R$ or $\T = (\R/ 2 \pi \Z)$:
\begin{align}
\begin{cases}
\dt u - \mathcal{G}_{\dl}\dx^2 u =\dx(u ^2) \\
u \vert_{t=0}  =u_0,
\end{cases}
\quad 
(t,x)\in \R\times \mathcal{M}. 
\label{ILW1}
\end{align}

\noi
The equation \eqref{ILW1}
was  introduced in \cite{joseph, KKD}
as a  model
 describing the propagation of an internal wave at the interface of a stratified fluid of 
finite depth $\dl \in (0, \infty)$, 
where
 the unknown $u :\R\times \M \to \R$
denotes the amplitude of the internal wave at the interface;
see also Remark~1.1 in \cite{LOZ}.
The  operator $\Gdl $ in \eqref{ILW1}
is defined by
\begin{align}
\Gdl   = 
\Tdl - \dl^{-1} \dx^{-1},  
\label{GG1}
\end{align}

\noi
where $\Tdl$
is given by 
\begin{align}
\ft{\Tdl f}(\xi)  =  -  i \coth(\dl \xi ) \ft f(\xi), 
\quad \xi \in\ft \M.
 \label{GG2}
\end{align}

\noi
Here, $\ft \M$ denotes 
the Pontryagin dual of $\M$, i.e.
$\ft \M = \R$ if $\M = \R$, 
and 
$\ft \M = \Z$ if $\M = \T$.
From the analytical point of view,
 \eqref{ILW1} is  of great  interest due to its rich structure;
 it is a dispersive equation, admitting soliton solutions.
  Moreover,  it is completely integrable with an infinite number of conservation laws \cite{KAS82, KSA, S89}
  and Lax pair structures  \cite{HKV}.
See~\cite{S19, KS21} for an overview of the subject and the references therein.

It is known that the ILW equation \eqref{ILW1}
 serves as an ``intermediate'' equation of finite depth $ \dl \in (0, \infty)$,
bridging the following two equations:

\smallskip
\begin{itemize}
\item
Benjamin-Ono equation, modeling fluid of infinite depth  ($\dl = \infty$), and

\smallskip
\item
Korteweg-de Vries equation, modeling shallow water ($\dl = 0$), 
\end{itemize}

\smallskip

\noi
both of which are also completely  integrable.
We note that 
the operator $\Tdl$ in \eqref{GG1}
 is the inverse of the 
so-called Tilbert transform (with an extra $-$ sign), 
appearing in the study of water waves
of finite depth \cite{BLS, HIT, AIT}.
In particular,  in the deep-water limit ($\dl \to \infty$), 
$\Tdl$ converges to the Hilbert transform $\H$
(associated with 
the Dirichlet-to-Neumann map 
in infinite depth), 
indicating possible convergence of ILW~\eqref{ILW1}
to 
the Benjamin-Ono equation~(BO):
\begin{align}
\dt u -
 \H \partial_x^2 u  =  \dx (u^2).
\label{BO1}
\end{align}

\noi
See Subsection \ref{SUBSEC:1.2}
for a further discussion.
Next, let us turn our attention to 
 the shallow-water limit ($\dl\to0$).
Recalling that  $u$ denotes the amplitude of the internal wave 
at the interface of  a stratified fluid of finite depth $\dl > 0$,
it follows that 
the amplitude $u$ of the internal wave at the interface is $O(\dl)$,  which also  tends to~$0$
in the shallow-water limit.
This suggests that, 
in order to observe any meaningful limiting behavior,
we need to magnify the fluid amplitude by a factor $\sim \frac 1\dl$.
This motivates us to consider the following scaling~\cite{ABFS}:
\begin{equation}
v(t,x)  = \tfrac3\dl u\big(\tfrac3\dl t, x\big),
\label{scale1}
\end{equation}

\noi
which transforms \eqref{ILW1} to  the following scaled ILW equation:
\begin{align}
\dt v   -    \Gd   \dx^2 v= \dx(v^2), 
\qquad \text{where }\ \Gd := \frac3\dl \Gdl.
\label{sILW1}
\end{align}

\noi
Then, under suitable assumptions, 
the scaled ILW~\eqref{sILW1} is known to  converge to the 
Korteweg-de Vries equation (KdV)
in the shallow-water limit ($\dl\to 0$)
\cite{ABFS, Li24, LOZ}:
\begin{align}
\dt v + \dx^3 v = \dx(v^2) . 
\label{KDV}
\end{align}

In recent years, there have been intensive research activities
on mathematical analysis of ILW, 
 from both deterministic and statistical viewpoints;
see  \cite{ABFS, MST, MV, MPV,  
MPS, Li24, LOZ, IS,
CLOP, CFLOP, FLZ, LP, GL} for the known well-\,/\,ill-posedness results
as well as  results on convergence
of  (scaled) ILW to BO in the deep-water limit ($\dl \to \infty$)
and to KdV in the shallow-water limit ($\dl \to 0$);
see also 
\cite{CGLLO, GLLO1, GLLO2}
for results on ILW with modulated dispersion.
On the one hand, 
as seen in  the recent progresses \cite{CLOP, GL}, 
the deep-water convergence of ILW~\eqref{ILW1} to BO~\eqref{BO1}
is relatively well understood;
 see Subsection \ref{SUBSEC:1.2}.
On the other hand, 
our understanding of the shallow-water convergence
of the scaled ILW~\eqref{sILW1}
has been rather poor, 
due to the much worse convergence properties
of the associated linear operators;
in the deep-water limit,
the Fourier multiplier of $\Gdl \dx$ converges
to that of $\H \dx$, uniformly in frequencies, 
whereas 
such uniform convergence does not hold
in the shallow-water limit
(see, for example,  \cite[Subsection~2.1]{LOZ} and 
\cite[Subsection~2.2]{CLO}). 
In this sense,
the shallow-water limit
is  {\it singular} (especially when compared to the deep-water limit), 
making the shallow-water convergence problem much harder.
We also point out that, 
as observed in \cite{LOZ, CLO},  
this singular nature of the shallow-water limit
also appears in
convergence of  conservation laws 
and invariant measures of the scaled ILW
to those of KdV
(exhibiting 
2-to-1 collapses
with regularity jumps and singular supports of measures);
see 
\cite[Remark~1.2]{CLO}.

Our main goal in this paper is to 
establish shallow-water convergence of 
the scaled ILW~\eqref{sILW1} to 
KdV~\eqref{KDV}
in $L^2(\M)$, 
by 
presenting a unified argument 
for both geometries
$\M = \R$ and $\M = \T$;
see Theorem \ref{THM:1}.
We achieve this goal by 
presenting a novel approach, 
based on the following
 two key ingredients:

\smallskip

\begin{itemize}
\item[(i)]
complete integrability approach, in particular, perturbation determinants
and related quantities, recently developed by 
R.\,Killip and M.\,Vi\c{s}an with their collaborators \cite{KVZ, KV19, HKV0, KLV, HKV}, and

\smallskip

\item[(ii)]
normal form approach, 
in particular, an infinite iteration of normal form reductions, 
developed by the third author 
with his collaborators \cite{GKO, OST, KOY, OW}. 

\end{itemize}

\smallskip

\noi
In Subsection \ref{SUBSEC:1.2}, 
we provide a brief overview of 
the deep-water regime
(namely, analysis on ILW~\eqref{ILW1}
and BO~\eqref{BO1} as its deep-water limit).
In Subsection \ref{SUBSEC:1.3}, 
we then consider 
the shallow-water regime
(namely, analysis on the scaled ILW~\eqref{sILW1}
and KdV~\eqref{KDV} as its shallow-water limit)
and state our main result (Theorem \ref{THM:1}).

\subsection{Deep-water regime}
\label{SUBSEC:1.2}

In this subsection, we briefly go over
recent well-posedness 
and deep-water convergence
 results  for ILW~\eqref{ILW1}.
It is known (see \cite{MST, KTz2}) that,  
just like BO~\eqref{BO1}, 
ILW~\eqref{ILW1}
behaves quasilinearly in the sense that 
a contraction argument 
can not be used 
for constructing solutions,
which makes the  well-posedness question 
of ILW 
rather challenging, especially in the low-regularity setting.

The main idea in this deep-water regime is to view
ILW as a perturbation of BO~\eqref{BO1}.
Fix $0 < \dl < \infty$.
Recall that  the Galilean transform
\begin{align}
w(t, x) =  u(t, x + \dl^{-1}t)
\label{gauge0} 
\end{align}

\noi
eliminates the term 
$\dl^{-1}\dx^{-1}$ in \eqref{GG1},
thus allowing us 
to  rewrite \eqref{ILW1}
as the following renormalized ILW:
\begin{align}
\dt w -
 \Tdl \dx^2 w =  \dx (w^2)   
\label{ILW2}
\end{align}

\noi
with $w|_{t = 0} = u_0$, 
where $\Tdl$ 
is as in \eqref{GG2}.
By writing $\Qdl =( \Tdl - \H)\dx$, 
where $\H$ denotes the Hilbert transform, 
we can write \eqref{ILW2} as 
\begin{align}
\dt w - \H \dx^2 w =  \dx (w^2) + \Qdl\dx w.
\label{ILW3}
\end{align}

\noi
Compare \eqref{ILW3} with \eqref{BO1}.
The key observation here is that 
$\Qdl$ has a smoothing property of any order
with a uniform control in $\dl \ges 1$
(see  \cite[Lemma 2.3]{CLOP}),
allowing 
us to view the second term on the right-hand side of \eqref{ILW3}
as a perturbation term.

In a recent work \cite{IS}, 
by combining 
this perturbative viewpoint
with 
the BO gauge transform  and the (quasilinear) normal form method, 
Ifrim and Saut 
proved  global well-posedness of~\eqref{ILW1}
in $L^2(\R)$ for each fixed $0 < \dl< \infty$
(along with long-time behavior of small solutions). 
This argument 
relies on dispersive estimates
and thus is not applicable to the periodic setting.
In \cite{CLOP}, 
the first three authors and Pilod
also 
exploited this perturbative viewpoint~\eqref{ILW3}, 
following~\cite{IS}.
By adapting
a unified
approach
to the $L^2$ well-posedness
of BO~\eqref{BO1}
on both the real line and the circle, 
 developed by Molinet and Pilod~\cite{MP}, 
the authors in \cite{CLOP}
proved global well-posedness of  ILW~\eqref{ILW1}
 in $L^2(\M)$, $\M = \R$ or $\T$. 
Furthermore, 
by exploiting the decay property of $\Qdl$ as $\dl \to \infty$, 
the  authors in  \cite{CLOP}
 proved the following 
deep-water convergence result.

\begin{oldtheorem}\label{THM:A}

Let $\M = \R$ or $\T$
and $s\ge 0$.
Given  $u_0 \in H^s(\M)$, 
let $u$ be the global solution to the BO equation \eqref{BO1}
with $u|_{t = 0} = u_0$ constructed in \cite{MP}.
Let  $\{u_{0, \dl}\}_{1 \le \dl <  \infty}\subset H^s(\M)$
such that 
$u_{0, \dl}$ converges to $u_0$ in $H^s(\M)$ as $\dl \to \infty$, 
and 
let $u_\dl$ be the global solution to the ILW equation~\eqref{ILW1} with 
$u_\dl|_{t = 0} = u_{0, \dl}$
constructed in \cite{CLOP}.
Then, as $\dl \to \infty$, 
$u_\dl$ converges to $u$
in 
 $C(\R; H^s(\M))$, 
endowed with the compact-open topology \textup{(}in time\textup{)}.

\end{oldtheorem}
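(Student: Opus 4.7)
The plan is to view ILW as a perturbation of BO via the decomposition \eqref{ILW3} and to transfer the $L^2$ well-posedness machinery for BO to the difference $w_\dl - u$ with constants that are uniform in $\dl \ges 1$. First, apply the Galilean transform \eqref{gauge0} to convert $u_\dl$ into $w_\dl$ solving the renormalized ILW \eqref{ILW2} with data $u_{0,\dl}$. Since \eqref{gauge0} is an isometry on $H^s(\M)$ and the translation $x \mapsto x + \dl^{-1}t$ tends to the identity uniformly on any compact time interval as $\dl \to \infty$, it suffices to show that $w_\dl \to u$ in $C([-T,T]; H^s(\M))$ for every $T > 0$, where $u$ is the BO solution of \eqref{BO1}.

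Second, I would establish uniform-in-$\dl$ bounds for $w_\dl$ in the resolution space of \cite{MP, CLOP}. The Molinet--Pilod scheme for $L^2$ well-posedness of BO is built from a gauge transformation that tames the high$\times$low interactions and short-time $X^{s,b}$-type estimates; as shown in \cite{CLOP}, these estimates extend to the perturbed equation \eqref{ILW3} with constants independent of $\dl \ges 1$. The crucial input is the smoothing property \cite[Lemma 2.3]{CLOP}, which gives $\|\Qdl f\|_{H^{s+N}} \les C_N \|f\|_{H^s}$ uniformly in $\dl \ges 1$ for any $N \ge 0$, together with a quantitative decay in $\dl$ on frequencies $\les \dl$. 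This yields a uniform a priori bound $\|w_\dl\|_{L^\infty([-T,T]; H^s)} \le C(\|u_0\|_{H^s}, T)$ on any time interval on which the BO solution $u$ exists.

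Third, I would analyze the difference $v_\dl := w_\dl - u$, which satisfies
\begin{align*}
\dt v_\dl - \H \dx^2 v_\dl = \dx\bigl((w_\dl + u) v_\dl\bigr) + \Qdl \dx w_\dl, \qquad v_\dl|_{t=0} = u_{0,\dl} - u_0.
\end{align*}
At $s = 0$, the BO flow map is not uniformly continuous in the usual sense, so one cannot use a naive energy/Duhamel argument on this equation. Instead, I would apply the Tao/Ionescu--Kenig gauge transform to both $w_\dl$ and $u$ (adapted as in \cite{MP, CLOP}) and write the difference equation for the gauged variables. This converts the quasilinear obstruction into a semilinear stability estimate, while the extra forcing term $\Qdl \dx w_\dl$ remains controlled: by the smoothing for $\Qdl$ applied to the uniform bound from step two,
\begin{align*}
\|\Qdl \dx w_\dl\|_{L^1([-T,T]; H^s)} \underset{\dl \to \infty}{\longrightarrow} 0.
\end{align*}
Combining this with continuity of the gauge map in $H^s$ and the short-time stability estimate for the gauged BO equation yields
\begin{align*}
\|v_\dl\|_{L^\infty([-T_0,T_0]; H^s)} \les \|u_{0,\dl} - u_0\|_{H^s} + o_{\dl \to \infty}(1)
\end{align*}
on a small interval $[-T_0, T_0]$ whose length depends only on the uniform a priori bound. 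Iterating this over $[-T,T]$ closes the argument.

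The main obstacle is step three: making the stability estimate uniform in $\dl \ges 1$ despite the quasilinear nature of BO at $L^2$. This requires propagating the $\Qdl$-smoothing through the nonlinear gauge transform and its inverse, and verifying that every multilinear estimate appearing in the Molinet--Pilod framework behaves well under the additional $\Qdl \dx w_\dl$ perturbation. Once this is done, the perturbative term vanishes in the limit $\dl \to \infty$ and the convergence $w_\dl \to u$ follows from the stability of the (gauged) BO flow.
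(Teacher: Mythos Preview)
The paper does not contain its own proof of Theorem~\ref{THM:A}: this result is quoted from \cite{CLOP} as background for the deep-water regime, and no argument for it appears in the present paper. Your proposal follows exactly the strategy that the paper attributes to \cite{CLOP}---the perturbative viewpoint \eqref{ILW3} on top of the Molinet--Pilod $L^2$ framework \cite{MP}, using the uniform-in-$\dl$ smoothing of $\Qdl$ from \cite[Lemma~2.3]{CLOP} and its decay as $\dl\to\infty$---so there is nothing to compare against here beyond confirming that your outline matches the cited approach.
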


We also mention  \cite{CFLOP, FLZ}, 
where 
the perturbative viewpoint \eqref{ILW3}
once again played a crucial role.
In the former paper \cite{CFLOP}, 
by combining this 
perturbative viewpoint
with  the complete integrability of BO, 
the first three authors, Forlano,  and Pilod
 established a global-in-time a priori bound
on the $H^s$-norm of a solution to  ILW for   $ - \frac 12 < s < 0$.
They also showed that 
ILW~\eqref{ILW1}  is ill-posed in $H^s(\M)$ for  $s < - \frac 12$, 
indicating that $s = -\frac 12$ is a critical regularity 
for ILW~\eqref{ILW1} (just like BO \eqref{BO1}).
In the latter paper \cite{FLZ}, 
the second and fourth authors with Forlano
established an unconditional deep-water convergence result
in $H^s(\M)$
for $s > 3 -  \sqrt{\frac{33}{4}} \approx 0.1277$.
We point out that 
the results
in \cite{CFLOP, FLZ}
hold on  both  the real line and the  circle.

\medskip

Before proceeding further, let us recall the notion of 
{\it sensible weak solutions}; see
\cite{OW1, OW, BOW}.

\begin{definition}\label{DEF:sol} \rm
Let $B(\M)$ be a Banach space of functions on $\M$.
Given $u_0 \in B(\M)$, 
we say that
 $u \in C((t_0, t_1); B(\M))$
is a sensible weak solution
to an equation
on a time interval $(t_0, t_1)$, $-\infty \leq t_0 < 0 < t_1 \leq\infty$, if,
for any sequence $\{u_{0, n}\}_{n \in \NB}$ of smooth functions
tending to $u_0$ in $B(\M)$, 
the corresponding (classical) solutions $u_n(t)$ with $u_n|_{t = 0} = u_{0, n}$
converge to $u(t)$ in $C((t_0, t_1); B(\M))$. 

\end{definition}

Note that,  by definition, 
  sensible weak solutions are unique.
We also point out that the notion of sensible weak solutions 
is  rather weak; 
 sensible weak solutions do not have to satisfy 
the equation even in the distributional sense.
For example, in the case of ILW, BO, and KdV, 
the nonlinearity $\dx (u^2)$
does not need to make sense as a space-time distribution
even if $u$ is a sensible weak solution. 
However, note that, given a sequence $\{u_n\}_{n \in \N}$
of smooth approximating solutions, 
it follows from 
Definition \ref{DEF:sol} 
and the equation that 
$\dx (u_n^2)$
converges to some limit  as space-time distributions.
Namely, 
 a sensible weak solution~$u$ (in the case of ILW~\eqref{ILW1}) satisfies
\[ \dt u - \Gdl \dx^2 u =\lim_{n \to \infty} \dx(u_n^2)  \]
 
\noi
as a space-time distribution, where the right-hand side is independent
of a choice of smooth approximation solutions $\{u_n\}_{n \in \N}$.
 Namely, we interpret the nonlinearity
only as the (unique) distributional limit of the nonlinearities
of smooth approximating solutions.
On the one hand,  the notion of sensible weak solutions is 
weaker than the usual notion of (strong) solutions for which
we directly make  sense of the nonlinearity as a space-time distribution.
On the other hand, 
the notion of sensible weak solutions is 
widely accepted, 
for example, 
in the low regularity well-posedness
of integrable dispersive PDEs 
\cite{KT1, KV19, GKT, KLV} 
and singular stochastic PDEs
\cite{Hairer, GKO1, GKO2}.
See~\cite{OW} for a further discussion
on various notions of weak solutions.

There are two recent breakthrough results \cite{GKT, KLV}
on optimal well-posedness of BO~\eqref{BO1}, 
corresponding to the $\dl = \infty$ case.
In  \cite{GKT}, 
G\'erard, Kappeler, and Topalov
proved 
global well-posedness
of BO \eqref{BO1}
in the entire subcritical regime
$H^s(\T)$, $s > -\frac 12$,
via the Birkhoff map approach.
In the same paper, they also proved ill-posedness
of BO~\eqref{BO1} in 
$H^{-\frac 12}(\T)$.
In~\cite{KLV}, 
Killip, Laurens, and Vi\c{s}an 
extended the global well-posedness
for $s > -\frac 12$
to the real line case
via the commuting flow method, 
which equally applies to the periodic case.
Note that solutions constructed in \cite{GKT, KLV}
are interpreted only as sensible weak solutions
in the sense of Definition \ref{DEF:sol}.

In a very recent preprint \cite{GL}, 
Gassot and Laurens
studied well-posedness of ILW~\eqref{ILW1} 
in negative Sobolev spaces on the circle.
By  combining
the perturbative viewpoint~\eqref{ILW3}
following  \cite{IS, CLOP, CFLOP, FLZ}, 
the Birkhoff map for BO \cite{GKT}, 
and the quantity
$\be(\kk; u)$
(related to the perturbation determinant)
defined in 
\cite[Proposition 4.3]{KLV}, 
they proved that ILW~\eqref{ILW1}
is globally well-posed in $H^s(\T)$, $s > - \frac12$, 
and that deep-water convergence 
(as in Theorem~\ref{THM:A})
holds in the same regularity range. 
Note that these results in~\cite{GL}
hold  only in the sense of 
sensible weak solutions defined in Definition~\ref{DEF:sol}.
Since the approach in \cite{GL} relies on
the work~\cite{GKT} for the periodic setting, 
the corresponding results 
in negative Sobolev spaces on the real line are still open at this point.

The perturbative approaches in \cite{CLOP, CFLOP, FLZ, GL}
are
based 
on the Galilean transform~\eqref{gauge0} to remove $\dl^{-1}\dx^{-1}$
in~\eqref{GG1} (this term plays a crucial role in the shallow-water limit, 
cancelling a certain divergence)
and thus are 
 not amenable to studying shallow-water convergence
since the spatial translation by $\dl^{-1} t$ diverges as $\dl \to 0$.
Furthermore, the operator $\Qdl$ in~\eqref{ILW3}
diverges as $\dl \to 0$.
Hence, in order to establish shallow-water convergence
in the low regularity setting,  
one needs to develop a novel approach, 
which we discuss in the next subsection.

We conclude this subsection by mentioning
yet another very recent preprint \cite{HKV}
by 
Harrop-Griffiths, Killip, and Vi\c{s}an, 
where they 
made a 
substantial progress in understanding  the complete integrability structure
of ILW \eqref{ILW1}
in terms of the Lax pair structure and the perturbation determinant, 
and developed a 
{\it non-perturbative} approach
for {\it both} the deep-water and shallow-water regimes
on {\it both} geometries.
We will discuss the work \cite{HKV} further in the next subsection.

\subsection{Shallow-water regime: main result}
\label{SUBSEC:1.3}

In this subsection, we consider the scaled ILW~\eqref{sILW1}.
We first note that, for each fixed $0 < \dl < \infty$, 
the ILW equation \eqref{ILW1} 
and the scaled ILW equation \eqref{sILW1}
are equivalent.
In particular, 
it follows from the discussion presented in the previous subsection that 
the scaled ILW \eqref{sILW1}
is globally well-posed in $L^2(\M)$, $M = \R$ or $\T$, 
and on the circle, this global well-posedness
extends to the range $s > - \frac 12$
(in the sense of sensible weak solutions).

In terms of the shallow-water convergence
of the scaled ILW \eqref{sILW1} to KdV \eqref{KDV}
as $\dl \to 0$, 
the second author \cite{Li24} established
it in $H^s(\M)$ for $s \ge \frac 12$ on both geometries.\footnote{In 
\cite{Li24}, the main statement (\cite[Theorem 1.4]{Li24}) states
shallow-water convergence for $s > \frac 12$.
For the endpoint case $ s= \frac 12$, see \cite[Remark 16]{Li24}.}
The shallow-water convergence problem for $s < \frac12$ 
has remained
a challenging open problem.

We now state our main result, 
establishing shallow-water convergence in $L^2(\M)$.

\begin{theorem}
\label{THM:1}

Let $\M = \R$ or $\T$.
Given  $v_0 \in L^2(\M)$, 
let $v$ be the global solution to the KdV equation \eqref{KDV}
with $v|_{t = 0} = v_0$.
Let  $\{v_{0, \dl}\}_{0 <  \dl \le 1}\subset L^2(\M)$
such that 
$v_{0, \dl}$ converges to $v_0$ in $L^2(\M)$ as $\dl \to 0$, 
and 
let $v_\dl$ be the global solution to the scaled ILW equation~\eqref{sILW1} with 
$v_\dl|_{t = 0} = v_{0, \dl}$
constructed in \cite{CLOP}.
Then, as $\dl \to 0 $, 
$v_\dl$ converges to $v$
in 
 $C(\R; L^2(\M))$, 
endowed with the compact-open topology \textup{(}in time\textup{)}.

\end{theorem}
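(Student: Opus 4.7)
The plan is to combine a high-frequency equicontinuity estimate derived from the complete integrability of ILW with a low-frequency perturbative comparison of the scaled ILW and KdV dynamics via normal form reductions. Fix $T > 0$ and $\varepsilon > 0$. The first step is to establish that there exists $N = N(\varepsilon, T, \{v_{0,\dl}\})$ such that, uniformly in $0 < \dl \le \dl_0$ sufficiently small,
\begin{equation*}
\sup_{|t| \le T} \| P_{>N} v_\dl(t) \|_{L^2(\M)} < \varepsilon.
\end{equation*}
This I would extract from the Lax-pair/perturbation-determinant machinery of Harrop-Griffiths, Killip, and Vi\c{s}an: from their construction one obtains, for the scaled ILW, a family of conserved quantities that controls $\| P_{>\kappa} v_\dl \|_{L^2}^2$ up to errors that are uniformly small in small $\dl$ as $\kappa \to \infty$. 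Combining these with the $L^2$-convergence $v_{0,\dl} \to v_0$ at time zero (yielding equicontinuity of the initial data) then propagates equicontinuity to every $|t| \le T$.

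The second step reduces the theorem to its low-frequency content. Letting $v$ denote the KdV solution with initial data $v_0$, the equicontinuity above together with the analogous statement for KdV (which follows from $L^2$-conservation and smooth approximation) implies that it suffices to prove $P_{\le N}(v_\dl - v) \to 0$ in $C([-T,T]; L^2(\M))$ as $\dl \to 0$ for each fixed $N$. For this I would run an infinite iteration of normal form reductions adapted to the KdV dispersion, in the spirit of \cite{GKO, OST}. The key resonance identity $\Phi(\xi_1,\xi_2,\xi) = \xi^3 - \xi_1^3 - \xi_2^3 = 3\xi_1\xi_2\xi$ on $\xi = \xi_1+\xi_2$ gives a lower bound that provides just enough smoothing to run the scheme at $L^2$-regularity, provided the high-frequency tail is already controlled. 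Applying the scheme to both $v$ (with phase $\Phi$) and $v_\dl$ (with phase $\Phi_\dl$ coming from $\Gd\dx^2$) produces a series representation of each solution as a sum over multilinear trees evaluated on the initial data. Since $\Phi_\dl \to \Phi$ uniformly on any fixed frequency ball and the scaled linear propagator converges to that of KdV on compact frequency sets, a tree-by-tree comparison gives $P_{\le N} v_\dl \to P_{\le N} v$ in $C_t L^2$, once summability of the expansion is secured uniformly in $\dl$.

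The main obstacle I anticipate is twofold. First, extracting uniform-in-$\dl$ equicontinuity from the HKV perturbation determinant is delicate: the symbol of $\Gdl$ is singular at $\xi = 0$ via the $\dl^{-1}\dx^{-1}$ term (absent in KdV but crucial to cancel a low-frequency divergence under the scaling \eqref{scale1}) and changes character entirely as $\dl \to 0$, so one must verify that the resulting conserved quantities dominate the high-frequency $L^2$-mass with constants independent of $\dl$. Second, running the normal form series uniformly in $\dl$ at $L^2$-regularity is borderline: even for KdV alone, infinite normal form reductions at $L^2$ are on the edge of summability, and one must check that all multilinear estimates are uniform in the phase $\Phi_\dl$, not merely pointwise in $\dl$. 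I would address this by exploiting a pointwise comparison $|\Phi_\dl| \gtrsim |\Phi|$ on the truncated region together with the a priori equicontinuity, which confines the nontrivial interactions to a bounded frequency band where $\Phi_\dl$ and $\Phi$ are uniformly comparable. Both the integrable input and the normal form scheme apply equally to $\M = \R$ and $\M = \T$, so the argument runs uniformly across the two geometries.
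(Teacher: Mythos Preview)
Your overall architecture matches the paper: high-frequency control via the HKV perturbation-determinant equicontinuity, combined with a low-frequency comparison via an infinite iteration of normal form reductions. However, your implementation of the normal form step differs from the paper's in a way that creates a real difficulty you have flagged but not resolved. You propose to run \emph{two separate} normal form expansions, one with the KdV phase $\Phi$ and one with the scaled ILW phase $\Phi_\dl$, and then compare tree by tree. The trouble is that $\Phi_\dl$ has no algebraic factorization analogous to $\Phi = -3\xi\xi_1\xi_2$, so there is no clean lower bound on the ILW non-resonant set to divide by. Your suggested fix, $|\Phi_\dl| \gtrsim |\Phi|$ on a truncated frequency region, only helps at the output frequency $|\xi|\le N$; inside a $j$-level tree the \emph{internal} frequencies $\xi^{(k)}_1,\xi^{(k)}_2$ are unconstrained, and these are exactly the variables that enter the denominators $\prod_k \widetilde\mu_{k,\dl}$. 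Equicontinuity controls the high-frequency \emph{mass} of the solution at terminal nodes, but it does not tame a bad multiplier.

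The paper avoids this entirely by a perturbative device: it runs the normal form reduction for \emph{both} $v$ and $v_\dl$ using \emph{only the KdV phase}. Concretely, each time $\partial_t\widehat{\mathbf v}_\dl$ is substituted, the ILW oscillation $e^{it\Xi_\dl}$ is written as $e^{it\Xi_{\mathrm{KdV}}} + \varphi_\dl$, producing at each generation an explicit error operator $\mathcal E_\dl^{(j)}$ carrying the factor $\varphi_\dl(t,\xi^{(j)},\xi_1^{(j)},\xi_2^{(j)}) = e^{it\Xi_\dl} - e^{it\Xi_{\mathrm{KdV}}}$ at the \emph{last} generation only. All denominators are then the KdV quantities $\widetilde\mu_k = \sum_{\ell\le k}\Xi_{\mathrm{KdV}}$, with their nice factorization, and the multilinear estimates are automatically uniform in $\dl$. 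The error $\mathcal E_\dl^{(j)}$ is handled by a dichotomy on $\max(|\xi_1^{(j)}|,|\xi_2^{(j)}|)$: below a threshold $\dl^{-2/5+\varepsilon_0}$ one uses $|\Xi_\dl - \Xi_{\mathrm{KdV}}|\lesssim \dl^2|\xi|^5 \lesssim \dl^{5\varepsilon_0}$ and the mean value theorem; above it one invokes the equicontinuity (at this \emph{much higher} cutoff than $N$) on a terminal factor $\widehat{\mathbf v}_\dl$. A minor additional point: the normal form output is not a series in the initial data but in the interaction representation $\mathbf v(t)$ itself (boundary terms $\mathcal N_0^{(j)}$ at $0$ and $t$, plus time integrals of $\mathcal N_1^{(j)}$), and a separate term $(S(t)-S_\dl(t))\mathbf P_{\le N}\mathbf v(t)$ coming from the linear propagator difference must be estimated directly by dominated convergence.
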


Theorem \ref{THM:1} significantly improves
the previously known results
on shallow-water convergence
and 
completes the well-posedness
and convergence study of ILW
on {\it both} geometries
within the $L^2$-framework
(where strong solutions are known to exist \cite{IS, CLOP}).
As mentioned in Subsection \ref{SUBSEC:1.1}, 
our strategy is based on the following
two ingredients:

\smallskip

\begin{itemize}
\item[(i)]
``weakly uniform (in small depth parameters)'' equicontinuity in $L^2$ of 
solutions to the scaled ILW~\eqref{sILW1}
(see Proposition \ref{PROP:EC} below), 
via the complete integrability approach, 
to treat the high frequency part, and 

\smallskip

\item[(ii)]
a perturbative argument
 based on 
an infinite iteration of normal form reductions
for KdV
to treat the low frequency part.

\end{itemize}

\smallskip

\noi
We
present 
 a proof of Theorem \ref{THM:1}
 in 
 Section \ref{SEC:PF}.

\medskip

Let us first discuss the first ingredient (i)
on 
weakly uniform (in small $\dl \ge 0$) equicontinuity in $L^2$
via  the completely integrability approach.
Before proceeding further, let us recall the notion of equicontinuity.

\begin{definition}\label{DEF:EC}
\rm 
Let $s \in \R$.
We say that a bounded set $E \subset H^s(\M)$ is equicontinuous
if 
\begin{align}
 \lim_{\eps \to 0+} \sup_{f \in E} \sup_{|y|< \eps} 
\| f (\,\cdot + y) - f(\,\cdot\,)\|_{H^s} = 0.
\label{EEC1}
\end{align}

\noi
By Plancherel's identity, \eqref{EEC1} is equivalent to 
\begin{align*}
 \lim_{N\to \infty} \sup_{f \in E}
 \| \P_{> N}f\|_{H^s} = 0
\end{align*}

\noi
where $\P_{> N}$ is the frequency projector onto frequencies
$\{ |\xi| > N\}$.

\end{definition}

In a recent preprint \cite{HKV}, Harrop-Griffiths, Killip, and Vi\c{s}an
introduced the following Lax pair for ILW~\eqref{ILW1}:\footnote{The convention in \cite{HKV}
is slightly different from ours.  The solution $q$ in \cite{HKV} corresponds to $-u$ in our setting.}
\begin{align}
\begin{split}
L_{u(t)} & = - i \dx + \tfrac 1{2\dl}(e^{2 i \dl \dx} - 1) + u(t), \\
P_{u(t)} & = - \tfrac  1 \dl \dx - i \dx^2 + \tfrac 1\dl \dx e^{2 i \dl \dx}
+ i (\Tdl\dx u)(t) + \big(\dx u(t) + u(t) \dx \big).
\end{split}
\label{LA1}
\end{align}

\noi
Then, $u$ is a solution to \eqref{ILW1} if and only if
\[ \frac d{dt} L_{u(t) } = \big[P_{u(t) }, L_{u(t) }\big].\]

\noi
See \cite[(8)]{HKV}.
As in the KdV case \cite{KVZ}, 
the central object is the following (logarithm
of the renormalized) perturbation determinant:
\begin{align}
\al(\kk; u) =  \sum_{j = 2}^\infty
\frac 1j \tr\Big\{
\big(\sqrt{R_\dl(\kk)}u
\sqrt{R_\dl(\kk)}\big)^j
\Big\}, 
\label{LA2}
\end{align}

\noi
where $R_\dl(\kk)$ denotes the free resolvent given by\footnote{Here, 
we changed the notation from \cite{HKV} to emphasize
the $\dl$-dependence.} 
\begin{align}
R_\dl(\kk) = \big(-i \dx + \tfrac 1{2\dl}(e^{2i \dl \dx} - 1) + \kk\big)^{-1}.
\label{LA3}
\end{align}

\noi
Under a smallness condition, 
the quantity $\al (\kk; u)$ is conserved under the flow of ILW~\eqref{ILW1};
see
Lemma \ref{LEM:AL1}.
Using suitable weighted averages (in $\kk$) of $\al(\kk; u)$, 
Harrop-Griffiths, Killip, and Vi\c{s}an \cite{HKV} established
the following uniform (in small $\dl$) equicontinuity of solutions
to the scaled ILW~\eqref{sILW1} in negative Sobolev spaces:

\begin{oldtheorem}\label{THM:B}
Let $\M = \R$ or $\T$
and  $- \frac 12 < s < 0$.
Then, given  $K > 0$, there exists $\dl_K  = \dl_K(s)> 0$
such that the following holds.
Suppose that a set $E \in H^\infty(\M)$
is bounded and equicontinuous in $H^s(\M)$
such that $\|f \|_{H^s}\le K$ for all $f \in E$.
Then, denoting by the set 
\begin{align*}
E_\dl  = \big\{v_\dl(t; f): t \in \R, \, f \in E \big\}
\end{align*}

\noi
the orbits emanating from $E$ under the flow of the scaled ILW \eqref{sILW1}
with the depth parameter $\dl$,
the set $\bigcup_{0 < \dl \le \dl_K} E_\dl$ is equicontinuous in $H^s(\M)$.
Here, $v_\dl(t; f)$ denotes the \textup{(}smooth\textup{)} solution to the scaled ILW \eqref{sILW1}
\textup{(}with the depth parameter $\dl$\textup{)}
with initial data $f$, evaluated at time $t$.

\end{oldtheorem}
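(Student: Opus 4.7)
The plan is to adapt the perturbation determinant strategy of Killip--Vi\c{s}an and collaborators, developed for KdV in~\cite{KVZ} and for BO in~\cite{KLV}, so that all estimates are \emph{uniform} in the depth parameter $\dl \in (0, \dl_K]$. The central object is a weighted $\kk$-average of the conserved quantity $\al(\kk; u)$ from~\eqref{LA2}, of the form
\begin{align*}
\Phi_N^{(s,\dl)}[u] := \int_{I(N, s, \dl)} w_N(\kk)\, \al(\kk; u)\, d\kk,
\end{align*}
where $w_N \ge 0$ and the window $I(N, s, \dl) \subset \R_+$ are tuned so that, up to controllable errors, $\Phi_N^{(s,\dl)}[u]$ is comparable to $\|\P_{> N} u\|_{H^s}^2$ uniformly for $\dl \in (0, \dl_K]$. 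Since $\al(\kk; u)$ is conserved along the ILW flow (Lemma~\ref{LEM:AL1}) and $u_\dl$ is related to $v_\dl$ by the scaling factor $\tfrac{\dl}{3}$ via~\eqref{scale1}, such a comparability propagates the equicontinuity of $E$ at $t = 0$ to the orbit union $\bigcup_{0 < \dl \le \dl_K} E_\dl$.

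The heart of the argument is the analysis of the quadratic part
\begin{align*}
\al_2(\kk; u) = \tfrac 12 \int_{\ft \M} K_\dl(\kk, \xi)\, |\ft u(\xi)|^2\, d\xi,
\end{align*}
where the kernel $K_\dl$ is dictated by the symbol of $R_\dl(\kk)$ from~\eqref{LA3}. Expanding $\tfrac{1}{2\dl}(e^{2i\dl\xi} - 1) = i\xi - \dl \xi^2 + O(\dl^2 \xi^3)$ reveals two regimes: a KdV-type dispersion $\dl\xi^2 + \kk$ for $|\xi| \ll \dl^{-1}$ and a BO-type dispersion $\xi \coth(\dl\xi) - \dl^{-1} + \kk$ for $|\xi| \gtrsim \dl^{-1}$, with a transition near $|\xi| \sim \dl^{-1}$. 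Following~\cite{HKV}, I would choose $w_N(\kk)$ supported in a dyadic band of $\kk$ calibrated to $N$ so that
\begin{align*}
\int w_N(\kk)\, K_\dl(\kk, \xi)\, d\kk \,\sim\, \jb{\xi}^{2s}\, \ind_{\{|\xi| > N\}}(\xi)
\end{align*}
uniformly for $\dl \in (0, 1]$ and every $\xi \in \ft \M$. Verifying this matching across the transition frequency $|\xi| \sim \dl^{-1}$ is the main technical point where uniformity in $\dl$ is earned.

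For the higher-order contributions $\al_j(\kk; u)$ with $j \ge 3$, standard Hilbert--Schmidt and operator-norm bounds on $\sqrt{R_\dl(\kk)}\, u\, \sqrt{R_\dl(\kk)}$, as used in~\cite{KVZ, KLV, HKV}, give
\begin{align*}
\Big| \int w_N(\kk)\, \al_j(\kk; u)\, d\kk \Big| \les C^j\, \|u\|_{H^s}^{j}
\end{align*}
with $C$ independent of $\dl \in (0,1]$. Under $\|f\|_{H^s} \le K$, together with the factor $\tfrac{\dl}{3}$ relating $u_\dl$ to $v_\dl$, one obtains an additional small factor in $\dl$ for each power beyond the quadratic, allowing $\sum_{j \ge 3}$ to be absorbed into the quadratic main term provided $\dl \le \dl_K(K, s)$. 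Combining the coercivity of $\al_2$, the conservation of $\al$, and the equicontinuity of $E$ at $t = 0$ then yields
\begin{align*}
\sup_{0 < \dl \le \dl_K}\, \sup_{t \in \R}\, \sup_{f \in E} \|\P_{> N} v_\dl(t; f)\|_{H^s} \,\longrightarrow\, 0 \quad \textup{as}\ N \to \infty,
\end{align*}
which is the claimed equicontinuity.

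The hardest step is the uniform-in-$\dl$ coercive lower bound on the weighted quadratic term. Because the symbol in~\eqref{LA3} genuinely changes character between the KdV-type and BO-type regimes at the frequency scale $|\xi| \sim \dl^{-1}$, the weight $w_N(\kk)$ and the averaging window must be engineered so that $\int w_N(\kk) K_\dl(\kk, \xi)\, d\kk$ behaves like $\jb{\xi}^{2s}$ simultaneously on both sides of this transition; failure on either side would destroy the uniformity as $\dl \to 0$. Balancing this against the $\dl$-dependent smallness inherited from~\eqref{scale1} is exactly what fixes the threshold $\dl_K = \dl_K(s, K)$ and prevents an unconditional statement in $\dl$.
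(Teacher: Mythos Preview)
The paper does not prove Theorem~\ref{THM:B}; it is quoted from \cite[Theorem~1.2]{HKV}, and the paper only recalls the ingredients of that proof in Section~\ref{SEC:EC} in order to prove the related $L^2$-level statement (Proposition~\ref{PROP:EC}). Your high-level strategy---conserved $\al(\kk;u)$, coercivity of the quadratic piece via a $\kk$-average of $F(\xi;\kk,\dl)$, smallness of the $j\ge 3$ tail from Hilbert--Schmidt bounds, and the $\tfrac\dl3$ scaling linking $u_\dl$ to $v_\dl$---is exactly the one used in \cite{HKV} and recalled here.

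One concrete inaccuracy: the weight is not ``supported in a dyadic band of $\kk$''. As the paper records in~\eqref{BL1}, the correct averaging is a two-piece power-law integral,
\[
\int_{\mu^2\dl}^{\dl^{-1}} F(\xi;\kk,\dl)\,\dl^{-s-\frac32}\kk^{s+\frac12}\,d\kk
\;+\;
\int_{\dl^{-1}}^{\infty} F(\xi;\kk,\dl)\,\dl^{-2}\kk^{2s}\,d\kk
\;\sim_s\;
\frac{1}{\dl^{2}\big(\mu^{2|s|}+|\xi|^{2|s|}\big)},
\]
with the split at $\kk=\dl^{-1}$ matching the transition between the KdV-type regime $\mu^2\dl<\kk<\dl^{-1}$ and the BO-type regime $\kk>\dl^{-1}$ that you correctly identified. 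The resulting multiplier is $\big(\mu^{2|s|}+|\xi|^{2|s|}\big)^{-1}$ rather than a sharp $\jb{\xi}^{2s}\ind_{|\xi|>N}$, and the role of $N$ is played by the lower threshold $\mu\ge\mu_K$ from~\eqref{BL0c}--\eqref{BL0d}. The factor $\dl^{-2}$ is precisely cancelled by the scaling $v_\dl=\tfrac3\dl u_\dl$, which is why the coercivity descends to $v_\dl$ uniformly in $\dl$. Once you replace your dyadic-band ansatz by this two-regime weight, the rest of your outline is the proof in \cite{HKV}.
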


See \cite[Theorem 1.2]{HKV}, where they also stated an a priori
$H^s$-bound on a solution to the scaled ILW \eqref{sILW1}.
In the same paper, they also establish an analogous
uniform (in large $\dl$)
 equicontinuity of solutions
to  ILW~\eqref{ILW1} in $H^s(\M)$, $- \frac 12 < s \le 0$;
see \cite[Theorem 1.1]{HKV}.

We note that, unlike the deep-water regime (\cite[Theorem 1.1]{HKV}), 
Theorem \ref{THM:B}
(namely \cite[Theorem~1.2]{HKV}) misses the endpoint case $s= 0$.
This is due to the fact that the convergence of $\kk F(\xi; \kk, \dl)$
(see \eqref{AL6} for the definition of $F$) as $\kk \to \infty$
is uniform only in   $\dl$ away from  $\dl = 0$ (see \cite[Lemma 4.2]{HKV})
and such uniformity seems false for $0 < \dl \ll 1$.

Our first key ingredient for proving Theorem \ref{THM:1}
is the following
weakly uniform (in small $\dl \ge 0$) equicontinuity in $L^2$
of 
solutions to the scaled ILW~\eqref{sILW1}.
Hereafter, 
 it is understood that the $\dl = 0$ case corresponds to solutions to KdV \eqref{KDV}.

\begin{proposition}\label{PROP:EC}
Let $\M = \R$ or $\T$.
Suppose that a set $E \in L^2(\M)$
is bounded and equicontinuous in $L^2(\M)$
such that 
\begin{align*}
\sup_{f \in E}
\|  f\|_{L^2} \le K
\end{align*}

\noi
for some $K > 0$.
Then, given any small $\eps > 0$, there exist
$N_1 = N_1(\eps, K)\gg1 $ and small $\dl_1 = \dl_1(\eps, K) > 0$ such that 
\begin{align}
\sup_{0 \le \dl \le \dl_1}
 \sup_{t\in \R} \sup_{f \in E}
 \| \P_{> N}v_\dl(t; f) \|_{L^2} < \eps
\label{LA5}
\end{align}

\noi
for any $N \ge N_1$, 
where  $v_\dl(t; f)$ is 
as in Theorem \ref{THM:B}.

\end{proposition}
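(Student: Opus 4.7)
The plan is to exploit the complete integrability of the scaled ILW via the perturbation determinant \eqref{LA2}--\eqref{LA3}, following and extending the density-of-states philosophy of \cite{KVZ, KLV, HKV}. The crucial observation that allows us to reach the $L^2$ endpoint (which is beyond the scope of Theorem \ref{THM:B}) is that the proposition permits $\dl_1$ to depend on $\eps$; in particular, we are free to restrict to a small-$\dl$ regime in which the Fourier multiplier defining the free resolvent $R_\dl(\kk)$ can be controlled uniformly in $\dl$, circumventing the obstruction responsible for the failure of \cite[Theorem 1.2]{HKV} at $s=0$.

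Concretely, for a smooth cutoff $\chi \in C_c^\infty((1,2))$ and a scale parameter $N\gg 1$, I would consider a weighted average
\begin{align*}
\Phi_{N,\dl}(v) \, := \, \int \chi(\kk/N)\, \kk\, \alpha(\kk;v)\, d\kk,
\end{align*}
which, by the conservation of $\alpha(\kk;u)$ (Lemma \ref{LEM:AL1}), is preserved along the scaled ILW flow as long as $N$ is sufficiently large compared to $\|v\|_{L^2}\le K$. Expanding \eqref{LA2} in $v$ and isolating the quadratic term yields a representation
\begin{align*}
\Phi_{N,\dl}(v) \, = \, \langle m_{N,\dl}(D)\,v,\,v\rangle \, + \, \mathcal{E}_{N,\dl}(v),
\end{align*}
where $m_{N,\dl}(\xi)$ is a Fourier multiplier obtained by integrating the symbol of $R_\dl(\kk)$ against $\kk\,\chi(\kk/N)$, and $\mathcal{E}_{N,\dl}(v)$ collects the cubic and higher-order terms, controllable via Hilbert-Schmidt and Schatten norm estimates of the form $O(N^{-\beta}K^j)$ for some $\beta>0$. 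Combining conservation of $\Phi_{N,\dl}$ with the assumed $L^2$-equicontinuity of the set $E$ at $t=0$ then transfers smallness of the high-frequency $L^2$-mass from the initial data to the solution at arbitrary time, yielding \eqref{LA5}.

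The main technical obstacle is showing that $m_{N,\dl}(\xi)$ behaves essentially as $\ind_{|\xi|\gtrsim N}$ \emph{uniformly} in $\dl\in[0,\dl_1]$ for a suitable $\dl_1=\dl_1(\eps,K)$. The Fourier symbol of $R_\dl(\kk)^{-1}$ is $\xi + \tfrac{1}{2\dl}(e^{-2\dl\xi}-1) + \kk$, which expands as $\kk + \dl\xi^2 + O(\dl^2|\xi|^3)$ when $\dl|\xi|$ is small but degenerates once $\dl|\xi|\gtrsim 1$; this is precisely the obstruction that forces Theorem \ref{THM:B} to exclude $s=0$. My remedy is to first fix $N=N_1(\eps,K)$ large enough that the multiplier comparison and the error $\mathcal{E}_{N,\dl}$ are favorable in the $\dl=0$ (KdV) case, and then to take $\dl_1$ so small that $\dl_1 N_1 \ll 1$; in this regime, a Neumann-type comparison of $R_\dl(\kk)$ to $R_0(\kk)=(-i\dx+\kk)^{-1}$ converges uniformly for $\dl\in[0,\dl_1]$ on all frequencies supporting $m_{N,\dl}$. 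Executing this plan reduces the problem to a delicate but essentially symbolic comparison of $R_\dl$ with $R_0$ in appropriate Schatten classes, which together with the conservation of $\Phi_{N,\dl}$ should deliver the weakly uniform equicontinuity claimed in \eqref{LA5}.
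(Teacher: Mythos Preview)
Your overall framework---use the perturbation determinant $\al(\kk;\cdot)$, take a weighted $\kk$-average, and exploit conservation to transfer high-frequency smallness from $t=0$ to all times---is the same as the paper's. However, your specific implementation has a genuine gap.

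The claim that localizing $\kk\sim N$ produces a multiplier $m_{N,\dl}$ behaving like $\ind_{|\xi|\gtrsim N}$ is incorrect in the regime $\dl N\ll 1$ you work in. From \eqref{AL7}, when $\dl\kk\ll 1$ and $\dl|\xi|\ll 1$ one has $\kk F(\xi;\kk,\dl)\sim (\dl\kk)^{-1/2}\kk/(\dl\xi^2+\kk)$, so integrating $\kk F$ over $\kk\sim N$ gives a multiplier $\sim N^{1/2}\dl^{-1/2}\cdot N/(\dl\xi^2+N)$, which is \emph{constant} on $|\xi|\lesssim (N/\dl)^{1/2}$ and then decays---a low-pass filter at the $\dl$-dependent scale $(N/\dl)^{1/2}$, not a high-pass filter at scale $N$. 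Relatedly, your $R_0(\kk)=(-i\dx+\kk)^{-1}$ is the deep-water (BO) resolvent; in the shallow-water limit the symbol $a_\dl(\xi)=\dl\xi^2+O(\dl^2|\xi|^3)\to 0$, so $R_\dl(\kk)\to\kk^{-1}$, and a Neumann comparison to $R_0$ has no content here.

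The paper circumvents this as follows. It fixes an auxiliary index $-\tfrac12<s<0$ and uses the full HKV weighted integral \eqref{BL1} (over $\kk>\mu^2\dl$, with two different $\kk$-weights above and below $\dl^{-1}$) to realize, in terms of the \emph{unscaled} ILW solution $u_\dl=\tfrac{\dl}{3}v_\dl$, the multiplier $\dl^{-2}\mu^{2|s|}/(\mu^{2|s|}+|\xi|^{2|s|})$---again a low-pass object. The key trick is then the identity \eqref{BL5}: subtracting this from the conserved $L^2$-norm converts it into the \emph{high-pass} multiplier $|\xi|^{2|s|}/(\mu^{2|s|}+|\xi|^{2|s|})$ acting on $v_\dl$, up to the remainder $A_\dl(u_\dl)(t)$, which is then bounded using conservation of $\al(\kk;u_\dl)$ and the cubic-and-higher Schatten estimates \eqref{CL3}--\eqref{CL11}. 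This $L^2$-subtraction step is the missing idea in your proposal and is precisely what rescues the $s=0$ endpoint.
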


As in the proof of Theorem \ref{THM:B}
presented in \cite[Section 4]{HKV}, 
the main 
approach is to take a suitable average (in $\kk$) of 
 $\al (\kk; u)$ in \eqref{LA2}.
We present a proof of Proposition \ref{PROP:EC} 
in Section~\ref{SEC:EC}.

\medskip

Theorem \ref{THM:1} 
on shallow-water convergence
follows once we prove the following statement;
given any $T > 0$ and any small $\eps > 0$, there exists $\dl_* = \dl_*(\eps, T)> 0$ such that 
\begin{align}
\|v-  v_\dl \|_{C_T L^2_x} <  \eps
\label{LA6}
\end{align}

\noi
for any $0 < \dl < \dl_*$, 
where 
$ v = v(v_0)$ and $v_\dl = v_\dl(v_{0, \dl})$ 
denote the solutions to 
KdV~\eqref{KDV} 
and 
the scaled ILW \eqref{sILW1}
(with the depth parameter $\dl$)
with initial data
 $ v|_{t = 0} = v_0$
and $v_\dl|_{t = 0} = v_{0, \dl}$, respectively.
In view of 
the assumed convergence of $v_{0, \dl}$
to $v_0$ in $L^2(\M)$
(which in particular implies that 
the family $\{v_{0, \dl}\}_{0 \le \dl \le 1}$
with $v_{0, 0} := v_0$
is equicontinuous in $L^2(\M)$
thanks to the Arzel\`a-Ascoli theorem), 
Proposition \ref{PROP:EC}
states that 
$N = N\big(\eps, \|v_0\|_{L^2}\big)\gg1 $ and small $\dl_1 = \dl_1\big(\eps, \|v_0\|_{L^2}\big) > 0$ such that 
\begin{align*}
\| \P_{> N} (v - v_\dl)\|_{C_T L^2_x} <  \tfrac \eps2
\end{align*}

\noi
for any $0 < \dl < \dl_1$.
Then, 
the bound
\eqref{LA6} follows
once we prove
that, given any small $\eps > 0$, 
there  exists
 small $\dl_2 = \dl_2\big(\eps, T, N, \|v_0\|_{L^2}\big) > 0$ such that 
\begin{align}
\| \P_{\le N} (v - v_\dl)\|_{C_T L^2_x} <  \tfrac \eps2 
\label{LA7}
\end{align}

\noi
for any $0 < \dl < \dl_2$, where
we may need to take possibly larger $N = N\big(\eps, T, \|v_0\|_{L^2}\big)$.
Here, $\P_{\le N}$ denotes the
frequency projector  onto frequencies
$\{ |\xi| \le N\}$.
In Section \ref{SEC:NF}, 
we implement a novel {\it perturbative} argument
based on an infinite iteration of normal form reductions
for KdV and prove the bound \eqref{LA7}
(at the level of the interaction representations;
see 
Proposition~\ref{PROP:low}
for a precise statement).
Here, the word ``perturbative''
means that 

\smallskip

\begin{itemize}
\item
we apply a normal form reduction 
(namely, integration by parts in time) only 
to the KdV multilinear dispersive terms, 
and 

\smallskip

\item
whenever a time derivative hits 
the solution $v_\dl$ to the scaled ILW \eqref{sILW1}, 
we simply replace the trilinear dispersive term
$e^{i t \Xi_\dl(\xi, \xi_1, \xi_2)}$
for the scaled ILW \eqref{sILW1}
by 
the trilinear dispersive term
$e^{i t \Xi_\KDV(\xi, \xi_1, \xi_2)}$
for KdV~\eqref{KDV}, 
where
$\Xi_\dl(\xi, \xi_1, \xi_2)$
and $\Xi_\KDV(\xi, \xi_1, \xi_2)$
are the resonance functions
(for the scaled ILW and KdV, respectively)
as in \eqref{Xi1}, 
and
control the difference 
$e^{i t \Xi_\dl(\xi, \xi_1, \xi_2)}
- e^{i t \Xi_\KDV(\xi, \xi_1, \xi_2)}$, 
corresponding to the error term 
$\EE_\dl^{(j)}$
defined in 
\eqref{ER1} and \eqref{NF13};
see Proposition \ref{PROP:NN}\,(iii).

\end{itemize}

\noi
Moreover, in handling the high frequency parts
on each multilinear term, arising in normal form reductions, 
we apply
 the weakly uniform 
equicontinuity (Proposition \ref{PROP:EC})
(but with much higher frequency cutoff; see \eqref{PE7c}).
See Sections~\ref{SEC:NF}, 
\ref{SEC:per}, 
and  \ref{SEC:Euc} for further details.


\begin{remark}\label{REM:high}\rm
(i) 
Theorem \ref{THM:1}, 
establishes shallow-water convergence in $L^2(\M)$.
We expect that Theorem \ref{THM:1} extends to the range $0 < s <  \frac 12$.
More precisely, by employing 
the differencing technique as in \cite[Lemma 3.5]{KVZ}
or the approach as in \cite{OW1}, 
we expect that 
Proposition~\ref{PROP:EC}
extends to the 
range $0 < s <  \frac 12$
together with a uniform (in small $\dl$) a priori bound
on the $H^s$-norm of solutions to the scaled ILW~\eqref{sILW1}.
As for Proposition \ref{PROP:low}, 
our perturbative normal form approach
easily extends to the case $0 < s  < \frac 12 $, 
provided that 
Proposition~\ref{PROP:EC}
and 
the aforementioned uniform (in small $\dl$) a priori $H^s$-bound
hold for
$0 < s  < \frac 12 $.
For simplicity of the presentation, however, 
we do not pursue this issue further in this paper.

For $- \frac 12 < s < 0$, 
the question of shallow-water convergence in $H^s(\M)$
remains a challenging open problem.
In this regime, 
  uniform (in small $\dl$) equicontinuity of solutions
to the scaled ILW~\eqref{sILW1} 
hold true in $H^s(\M)$ (Theorem \ref{THM:B}).
However, the normal form approach completely breaks down
in negative Sobolev spaces.
It would be of interest to investigate this issue, 
by further exploiting  the completely integrable structure
of the equation (such as the Lax pair structure in \eqref{LA1}).

\medskip

\noi
(ii) 
By adapting the approach in 
 \cite{Li24}
to the modified energy method developed in  \cite{MPV, MPV2}, 
we expect that shallow-water convergence in $H^s(\M)$, $\M = \R$ or $\T$,  follows
for  the range $\frac 14 < s < \frac 12$
without using any input from the complete integrability of ILW.

%

\end{remark}

\begin{remark}\rm
(i) In \cite{BO2}, Bourgain proved global 
well-posedness of KdV \eqref{KDV} in $L^2(\M)$, $\M = \R$ or $\T$, 
where solutions are shown 
to be unconditionally unique
(namely, uniqueness holds in the entire class $C(\R; L^2(\M))$)
in later works
\cite{Zhou, BIT}.\footnote{In \cite{BIT}, Babin, Ilyin, and Titi
presented a new approach to construct solutions to KdV in $L^2(\T)$, without referring to unconditional uniqueness
in $L^2(\T)$. However, since their construction does not involve any 
auxiliary function spaces, it yields unconditional uniqueness
of solutions to KdV \eqref{KDV} in $L^2(\T)$.
This point was first noted in \cite{KO, GKO}.}
This is the reason that  we did not specify how
the $L^2$-solution $v$ to KdV \eqref{KDV}
was constructed in Theorem \ref{THM:1}.

\medskip

\noi
(ii) 
As mentioned above, Zhou \cite{Zhou}
proved unconditional uniqueness in $L^2(\R)$
of solutions to KdV \eqref{KDV} on the real line
by working within the framework of the Fourier restriction norm method.
In Sections \ref{SEC:NF}, \ref{SEC:per}, and \ref{SEC:Euc}, we implement an infinite iteration
of normal form reductions in $L^2(\M)$.
We expect that, by elaborating the argument presented
in Sections~\ref{SEC:NF} and~\ref{SEC:Euc}, it would provide
an alternative proof of the unconditional uniqueness in $L^2(\R)$
of solutions to KdV~\eqref{KDV} on the real line.
We, however, do not pursue this issue in this paper.

\end{remark}

\begin{remark}\label{REM:NF1} \rm

(i)
In a seminal work \cite{BIT}, Babin, Ilyin, and Titi
proved unconditional well-posedness of the periodic KdV in $L^2(\T)$
by implementing two iterations of normal form reductions.\footnote{In \cite{BIT}, 
the authors called their approach
 ``differentiation by parts'' without  addressing a normal form method.
This association with normal form reductions was made in a later work \cite{GKO}.}
As mentioned above, 
 our proof of  the low frequency estimate \eqref{LA7}
 (more precisely, Proposition \ref{PROP:low})
is based on (a perturbative argument via) normal 
form reductions for KdV, 
and,  as such, two iterations indeed allow
us to prove 
Proposition \ref{PROP:low} in the periodic setting.
As seen in the modified KdV case \cite{KO, KOY},
however, an infinite iteration seems to be necessary 
in the real line case.
In order to present a unified
approach for both geometries, 
we  decided to 
 present a proof of Proposition \ref{PROP:low}
by implementing an infinite iteration of normal form reductions, 
even in the periodic setting.

\medskip

\noi
(ii) 
In \cite{Ki2}, 
Kishimoto 
introduced an iterative approach 
to an infinite iteration of normal form reductions, 
where one applies basic nonlinear\footnote{bilinear in our case.} estimates;
see also \cite{KOY}.
It seems that such an iterative approach, relying
on basic bilinear estimates,  is not suitable for our problem, 
since 
our argument involves careful analysis
on derivative losses and multilinear dispersions
from  different generations.
See, for example,~\eqref{PQ9}, 
\eqref{PE3}, 
and 
analysis on $G_{\TT_j}$ defined in \eqref{RE4}.

\medskip

\noi
(iii) 
The normal form method 
presented in Section \ref{SEC:NF}
corresponds to the so-called 
 Poincar\'e-Dulac normal form reduction;
see \cite[Section 1]{GKO}.
The Poincar\'e-Dulac normal form reductions have been 
used in various contexts:
small data global well-posedness \cite{Shatah}, 
unconditional uniqueness \cite{BIT, KO, GKO, OW, Ki2, Ki3, MP2, FLZ}
also including the stochastic and modulated settings
\cite{GLLO1,  OSW}, 
nonlinear smoothing and the nonlinear Talbot effect \cite{ETz, ETz3, ETz2, CD}, 
existence of global attractors~\cite{ETz3}, 
reducibility~\cite{CGKO}, 
improved energy estimates 
in both the deterministic and probabilistic contexts
\cite{TT, GO, OW2, OST}, 
and numerical schemes~\cite{CFO}.
Our proof of Theorem~\ref{THM:1} 
presents  a novel  application 
of the 
 Poincar\'e-Dulac normal form reductions.
See Section~\ref{SEC:NF}
for a further discussion.

\end{remark}

%

\section{Preliminaries}

\subsection{Notations}
We use $A\les B$ if there exists $C>0$ such that $A \le CB$, and $A\sim B$ if $A \les B$ and $B \les A$. 
We also write $A \ll B$ if $A \le c B$ for some small constant $c>0$. 
We may write  $\les_{\al}$, $\sim_{\al}$, and $\ll_\al$ to 
emphasize the dependence  on an external parameter $\al$.
We use $C>0$ to denote various constants, which may vary line by line,
and we may write $C_\al$ or $C(\al)$ to  signify  dependence on an external parameter
$\al$.

Given $\M = \R$ or $\T$, 
let   $\ft \M$ denote the Pontryagin dual of $\M$, 
i.e.~
\begin{align*}
\ft \M = \begin{cases}
 \R, & \text{if }\M = \R, \\
 \Z, & \text{if } \M = \T.
\end{cases}
\end{align*}

\noi
When $\ft \M = \Z$, 
we endow it with the counting measure.
Our conventions for the Fourier transform are as follows:
\[ \ft f(\xi) = \frac{1}{\sqrt{2\pi}} \int_\R f(x) e^{-ix \xi} dx
\qquad\text{and}\qquad
 f(x) = \frac{1}{\sqrt{2\pi}} \int_\R \ft f(\xi) e^{ix \xi} d\xi
\]

\noi
for functions on the real line $\R$
and 
\[ \ft f(\xi) =  \frac 1{\sqrt{2\pi}}\int_0^{2\pi } f(x) e^{-ix \xi} dx
\qquad\text{and}\qquad
 f(x) = \frac 1{\sqrt{2\pi}}
 \int_{ \Z}\ft f(\xi) e^{ix \xi} d\xi 
\]

\noi
for functions on the circle $\T$, 
where the latter integral on $\Z$ is with respect to the counting measure:
\begin{align*}
\int_{ \Z} f(\xi) d\xi 
:= 
\sum_{\xi \in   \Z} f(\xi) .
\end{align*}

\noi
Then, 
Plancherel's identity is expressed as 
\begin{align}
 \|f \|_{L^2(\M)} = \| \ft f \|_{L^2(\ft \M)}.
\label{int2}
 \end{align}

\noi
We also  record the following identity:
\begin{align}
\ft{fg}(\xi) = \frac{1}{\sqrt{2\pi}} \int_{\ft \M} \ft f(\eta) \ft g(\xi - \eta) d\eta, 
\quad \xi \in \ft \M.
\label{int3}
\end{align}

\noi
On $\T$, we work with mean-zero functions in Sections
 \ref{SEC:NF} and  \ref{SEC:per}.
For this purpose, we set 
 \[\Z_* : = \Z \setminus\{0\}.\]


Given $N\in\N$, $\P_{\le N}$ denotes the Dirichlet projector onto spatial frequencies 
$\{ |\xi| \le N\}$ defined by 
\begin{align}
\P_{\le N} f(x )  =  \frac{1}{\sqrt{2 \pi}} \int_{\ft \M} \ind_{|\xi|\le N} \cdot \ft{f}(\xi) e^{ix\xi}d\xi. 
\label{P1}
\end{align}

\noi
We set $\P_{> N} = \Id - \P_{\le N}$.
Given dyadic $N \in 2^\Z$, we use $\P_N$
to denote the Littlewood-Paley projector
onto frequencies $\{|\xi|\sim N\}$.

We use $S(t)$ and $S_\dl(t)$, $0 < \dl < \infty$, 
to denote the linear propagators
for KdV \eqref{KDV} and the scaled ILW \eqref{sILW1}, respectively:
\begin{align}
S(t) = e^{- t \dx^3} 
\qquad 
\text{and}
\qquad 
S_\dl(t) = e^{t \Gd \dx^2 }, 
\label{lin1}
\end{align}

\noi
where $\Gd$ is as in \eqref{sILW1}; see also \eqref{GG1} and \eqref{GG2}.

\subsection{On the basic  multiplier}

We recall some results in \cite{Li24, LOZ, CLO} on the dispersive operator $\Gd\dx^2$
in \eqref{sILW1}; see also \eqref{GG1} and \eqref{GG2}.
Given $0<\dl < \infty$, let $\L_\dl(\xi)$ be the multiplier for $\Gd\dx$ given by
\begin{align}
\L_\dl(\xi)
= i   \xi \ft{\Gd}(\xi)
=  \dl^{-1} \big( \xi \coth(\dl \xi) - \dl^{-1}\big),
\label{L1}
\end{align}

\noi
where $ \ft{\Gd}(\xi)$ denotes the multiplier for $ \Gd$.

\begin{lemma}\label{LEM:multip}
Let $0< \dl< \infty$. Then, the following statements hold.

\smallskip
\noi{\rm(i)} $0 < \L_\dl(\xi) \le \min( \xi^2, \tfrac1\dl |\xi| )$.

\smallskip
\noi{\rm(ii)} We have
\begin{align*}
\L_\dl(\xi) 
&
= 6 \xi^2 \sum_{k=1}^\infty \frac{1}{k^2 \pi^2 + \dl^2 \xi^2}
\le \xi^2
\end{align*}

\noi
\noi
for any $0 < \dl < \infty$.
Moreover, for fixed $\xi \in \ft \M$, $\L_\dl(\xi)$ is decreasing in $\dl$, and 
\begin{align*}
\lim_{\dl\to 0} \L_\dl(\xi) = \xi^2, \quad \xi  \in \ft \M. 
\end{align*}

\smallskip
\noi{\rm(iii)} We can also write $\L_\dl(\xi)$ as follows
\begin{align}
\notag
\L_\dl(\xi) 
=  \xi^2 \big( 1 - h(\dl, \xi) \big), 
\end{align}
where $h$ is given by 
\begin{align}
\label{hdef}
h(\dl, \xi)
&
= 6 \dl^2 \xi^2 \sum_{k=1}^\infty \frac{1}{k^2 \pi^2 (k^2 \pi^2 + \dl^2 \xi^2)} \in (0,1], 
\\
\label{limh}
\sum_{n\in\Z_*} h^2(\dl, \xi) 
&= \infty
, 
\quad 
\text{and}
\quad 
\lim_{\dl\to 0} h(\dl, \xi) =0. 
\end{align}

\end{lemma}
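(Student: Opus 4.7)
The central input is the Mittag-Leffler partial-fraction expansion
\[
\coth(y) = \frac{1}{y} + \sum_{k=1}^\infty \frac{2y}{y^2 + k^2\pi^2}, \qquad y \neq 0,
\]
which can be derived by logarithmic differentiation of the Weierstrass product for $\sinh$. Substituting $y = \dl\xi$ into this identity and inserting the result into the defining formula for $\L_\dl(\xi)$ (so that the $\dl^{-1}$ constant term and the $1/y$ summand cancel, after accounting for the $\tfrac{3}{\dl}$ prefactor in $\Gd = \tfrac{3}{\dl}\Gdl$), I would read off directly the series representation claimed in part (ii). From that representation, the monotonicity of $\L_\dl(\xi)$ in $\dl$ for fixed $\xi$ is immediate since each summand $(k^2\pi^2 + \dl^2\xi^2)^{-1}$ is decreasing in $\dl$; and the pointwise limit $\lim_{\dl\to 0}\L_\dl(\xi) = \xi^2$ reduces via dominated convergence to the Basel identity $\sum_{k\ge 1}(k\pi)^{-2} = 1/6$.

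Granted the series representation, part (i) follows in a few lines. Positivity is visible term by term, and $\L_\dl(\xi) \le \xi^2$ drops out by discarding $\dl^2\xi^2$ from each denominator and invoking Basel. The complementary bound involving $|\xi|/\dl$ requires slightly more care, since termwise estimates of the form $(k^2\pi^2 + \dl^2\xi^2)^{-1} \le (2k\pi\dl|\xi|)^{-1}$ produce a divergent harmonic tail; to bypass this, I would compare the sum with the integral
\[
\sum_{k=1}^\infty \frac{1}{k^2\pi^2 + \dl^2\xi^2} \le \int_0^\infty \frac{ds}{s^2\pi^2 + \dl^2\xi^2} = \frac{1}{2\dl|\xi|},
\]
obtained via the arctangent antiderivative. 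This is the only point in the proof where a purely termwise estimate fails, and it is the main (minor) technical subtlety.

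For (iii), both the precise form of $h$ and its range come out of combining the series representation of $\L_\dl$ from (ii) with the Basel-rewritten identity $\xi^2 = 6\xi^2\sum_{k\ge 1}(k\pi)^{-2}$: termwise subtraction gives
\[
\xi^2 - \L_\dl(\xi) = 6\xi^2\sum_{k=1}^\infty\left(\frac{1}{k^2\pi^2} - \frac{1}{k^2\pi^2 + \dl^2\xi^2}\right) = 6\dl^2\xi^4\sum_{k=1}^\infty\frac{1}{k^2\pi^2(k^2\pi^2 + \dl^2\xi^2)},
\]
which isolates $h(\dl,\xi)$ in the stated form. The inclusion $h(\dl,\xi) \in (0,1]$ is then a direct rephrasing of the two-sided bound $0 < \L_\dl(\xi) \le \xi^2$ from (i). The limit $\lim_{\dl\to 0} h(\dl,\xi) = 0$ for fixed $\xi$ follows by dominated convergence, since the summand is dominated by $k^{-4}\pi^{-4}$ uniformly in $\dl$ while the prefactor $\dl^2\xi^2$ vanishes. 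Finally, for the divergence $\sum_{n\in\Z_*} h^2(\dl,n) = \infty$ on the circle, I would use that the bound $\L_\dl(n) \lesssim |n|/\dl$ from (i) forces $\L_\dl(n)/n^2 \to 0$ as $|n|\to\infty$, hence $h(\dl,n) \to 1$; so $h^2(\dl,n)$ is bounded below by a positive constant outside a finite set of $n$, making the sum over $\Z_*$ diverge.
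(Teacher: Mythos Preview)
The paper does not supply a proof of this lemma; it is recalled from \cite{Li24, LOZ, CLO}. Your argument via the Mittag--Leffler expansion of $\coth$ and the Basel identity is correct and complete.

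Two minor observations. First, your integral comparison for the bound $\L_\dl(\xi) \le \tfrac{1}{\dl}|\xi|$ actually yields $6\xi^2 \cdot \tfrac{1}{2\dl|\xi|} = 3|\xi|/\dl$ rather than $|\xi|/\dl$; this is sharp, and reflects an inconsistency between the displayed formula \eqref{L1} (which is missing the factor $3$ coming from $\Gd = \tfrac{3}{\dl}\Gdl$) and the coefficient $6$ in the series of part (ii), not a defect in your method. Second, the inclusion $h \in (0,1]$ does not follow from $0 < \L_\dl \le \xi^2$ as you write --- that gives $h = 1 - \L_\dl/\xi^2 \in [0,1)$. To obtain $h > 0$ you need the \emph{strict} inequality $\L_\dl(\xi) < \xi^2$, which you can read off termwise from the series in (ii) whenever $\dl > 0$ and $\xi \ne 0$.
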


We note that \eqref{hdef} implies the following bound:
\begin{align}
h(\dl, \xi)  \le \dl^2 |\xi|^2
\label{L3}
\end{align}

\noi
for any $\xi \in \ft \M$.

\section{Shallow-water convergence in $L^2$}
\label{SEC:PF}

In this section, we present a proof of our main result (Theorem~\ref{THM:1})
on 
shallow-water convergence of $L^2$-solutions to the scaled ILW \eqref{sILW1} to 
that to KdV \eqref{KDV}. 
Given  $v_0 \in L^2(\M)$, 
let $v  \in C(\R; L^2(\M))$ be the  solution to KdV~\eqref{KDV}
with $v|_{t = 0} = v_0$.
Let  $\{v_{0, \dl}\}_{0 <  \dl \le 1}\subset L^2(\M)$
such that 
$v_{0, \dl}$ converges to $v_0$ in $L^2(\M)$ as $\dl \to 0$, 
and 
let $v_\dl \in  C(\R; L^2(\M)) $ be the  solution to the scaled ILW~\eqref{sILW1} with 
$v_\dl|_{t = 0} = v_{0, \dl}$.
We then denote by  $\vv$ and $\vvd$
 the interaction representations  of  $v$ and $v_\dl$ defined by 
\begin{align}
\label{inter}
\vv(t)  = S(-t) v(t)
\qquad 
\text{and}
\qquad 
\vvd(t) = S_\dl(-t) v_\dl(t) , 
\end{align}

\noi
respectively, 
where
$S(t)$ and $S_\dl(t)$ are as in \eqref{lin1}.

As mentioned in Section \ref{SEC:1}, 
the high frequency part is handled by 
the weakly uniform (in small $\dl$)
equicontinuity (Proposition \ref{PROP:EC})
whose proof is presented in 
Section \ref{SEC:EC}.
For the low frequency part of the difference, we have the following statement.

\begin{proposition}
\label{PROP:low}
Let $\M = \R$ or $\T$ and fix $T > 0$.
Given  $v_0 \in L^2(\M)$, 
let $v$, $\{v_{0, \dl}\}_{0 <  \dl \le 1}$, and $v_\dl$ be as above.
Then, 
given $\eps>0$, there exists $N_2=N_2\big(\eps,  T, \|v_0\|_{L^2}\big) \gg1$
such that
the following statement holds; given any $N \ge N_2$, 
there exists
small 
$\dl_2 = \dl_2\big(\eps, T,  N, \|v_0\|_{L^2}\big) >0$
such that 
\begin{align}
\label{low00}
\| \P_{\le N  } (\vv - \vv_\dl ) \|_{C_T L^2_x}  
&
< \eps
\end{align}

\noi
for any $0 < \dl \le \dl_2$, 
where $\vv$ and $\vvd$ are the interaction representations 
of $v$ and $v_\dl$, respectively, defined in \eqref{inter}.
\end{proposition}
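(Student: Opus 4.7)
The plan is to expand both $\mathbf{v}$ and $\mathbf{v}_\delta$ via Duhamel's formula and perform an infinite iteration of Poincar\'e-Dulac normal form reductions on the KdV side, then execute the \emph{same} integration-by-parts scheme on the scaled ILW side with the KdV resonance factors, treating the resulting mismatch as an error controlled by the convergence $\mathcal{L}_\delta(\xi) \to \xi^2$ from Lemma~\ref{LEM:multip}. On the Fourier side, the nonlinear Duhamel terms carry the oscillations $e^{is\Xi_{\KDV}(\xi,\xi_1)}$ with $\Xi_{\KDV}(\xi,\xi_1) = -3\xi\xi_1(\xi-\xi_1)$ for $v$, and $e^{is\Xi_\delta(\xi,\xi_1)}$ for $v_\delta$, where $\Xi_\delta$ is built from $\mathcal{L}_\delta$. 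By Lemma~\ref{LEM:multip}, on the frequency cube $\{\max(|\xi|,|\xi_1|,|\xi-\xi_1|) \le M\}$ we have $|\Xi_\delta - \Xi_{\KDV}| \lesssim \delta^2 M^5$, so for fixed $M$ the two phases are uniformly close as $\delta \to 0$.

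First I would write
\begin{align*}
\mathbf{P}_{\le N}(\mathbf{v}(t) - \mathbf{v}_\delta(t)) = \mathbf{P}_{\le N}(v_0 - v_{0,\delta}) + \int_0^t \mathbf{P}_{\le N}\bigl(\mathcal{N}_{\KDV}(\mathbf{v})(s) - \mathcal{N}_\delta(\mathbf{v}_\delta)(s)\bigr)\, ds,
\end{align*}
where $\mathcal{N}_{\KDV}$ and $\mathcal{N}_\delta$ denote the nonlinearities in the interaction picture. On the KdV integrand I integrate by parts in time exploiting $\partial_s e^{is\Xi_{\KDV}} = i\Xi_{\KDV} e^{is\Xi_{\KDV}}$, which yields boundary terms at $s=0$ and $s=t$ plus higher-order multilinear terms when $\partial_s$ hits one of the two $\mathbf{v}$ factors and brings in another copy of the nonlinearity. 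I perform the identical integration-by-parts on the scaled ILW side using the same denominator $\Xi_{\KDV}$ (not $\Xi_\delta$); the difference generates the error $\mathcal{E}_\delta^{(j)}$ indicated in the excerpt, whose integrand contains the factor $e^{is\Xi_\delta} - e^{is\Xi_{\KDV}}$. Iterating produces a telescoping identity relating $\mathbf{P}_{\le N}(\mathbf{v}-\mathbf{v}_\delta)$ to a sum of generation-$j$ boundary and error terms.

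At each generation $j$, I would split the $(j{+}1)$-linear term into a low-frequency piece where all inputs are $\mathbf{P}_{\le M}$-projections and a remainder in which at least one input carries frequency $>M$. The high-frequency remainder is handled by Proposition~\ref{PROP:EC}: applying it to the equicontinuous family $\{v_{0,\delta}\}_{0 \le \delta \le 1}$ (which is equicontinuous in $L^2$ by Arzel\`a-Ascoli) yields, for $M$ large depending on $\epsilon, j, \|v_0\|_{L^2}$, a bound by $\epsilon\cdot 2^{-j}$ uniformly in $0\le\delta\le\delta_1$ and $t\in[0,T]$. The low-frequency piece, together with the errors $\mathcal{E}_\delta^{(j)}$, lives on a frequency set of size $\lesssim M$ and is controlled by crude $L^1_\xi \hookrightarrow L^\infty \hookrightarrow L^2$ estimates combined with the uniform $L^2$-bound $\|\mathbf{v}_\delta(s)\|_{L^2} = \|v_{0,\delta}\|_{L^2} \lesssim \|v_0\|_{L^2}$ from conservation; here the smallness $|\Xi_\delta - \Xi_{\KDV}| \to 0$ as $\delta \to 0$ ensures $\mathcal{E}_\delta^{(j)} \to 0$ for each fixed $j$, $N$, $M$, $T$.

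The hard part will be controlling the \emph{infinite} iteration uniformly in $\delta$: on $\R$ one lacks any spectral gap for the resonance function, so one cannot terminate the iteration after finitely many steps as in \cite{BIT}. This is precisely the difficulty already encountered for mKdV in \cite{KO, KOY}, and as in those works I expect the bilinear KdV estimate (on the frequency-truncated piece) to give a factor strictly smaller than $1$ per generation once $N$ (and $M$) are large enough depending on $T$ and $\|v_0\|_{L^2}$, thus closing the geometric sum. The bookkeeping challenge lies in tracking how derivative losses from successive normal form denominators combine with multilinear dispersion inherited from earlier generations (cf.\ the objects $G_{\TT_j}$ referenced in the introduction), and ensuring that the error terms $\mathcal{E}_\delta^{(j)}$ sum absolutely in $j$ with constants independent of $\delta$, so that first sending $\delta \to 0$ (for fixed $N, M$) and then taking $N, M$ large delivers the bound~\eqref{low00}.
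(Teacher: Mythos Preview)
Your overall strategy is on target and matches the paper's: perform the Poincar\'e--Dulac iteration using the KdV phase $\Xi_{\KDV}$ on \emph{both} equations, so that the discrepancy on the ILW side produces exactly the error hierarchy $\mathcal{E}_\delta^{(j)}$, and control the latter using $\Xi_\delta \to \Xi_{\KDV}$ together with Proposition~\ref{PROP:EC}. But two structural ingredients are missing, and without them the sketch does not close.

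First, you never introduce a resonant/non-resonant splitting. The paper does \emph{not} integrate by parts on the whole nonlinearity at generation $j$; it splits $\NN^{(j)} = \NN^{(j)}_1 + \NN^{(j)}_2$ according to whether $|\widetilde\mu_j| \le (j+2)^4 |\widetilde\mu_{j-1}|$ or not, and only reduces the non-resonant piece $\NN^{(j)}_2$. The growing thresholds guarantee $|\widetilde\mu_k| \gtrsim ((k+2)!K)^4$ on $\bigcap A_k^c$, and it is this \emph{factorial} decay in the denominators that beats the $j!$ growth of $|\mathfrak{T}(j)|$ and makes the series $\sum_j \NN_0^{(j)}$, $\sum_j \NN_1^{(j)}$, $\sum_j \EE_\delta^{(j)}$ converge (Proposition~\ref{PROP:NN}). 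Your proposed mechanism---choose $M = M_j$ growing so that equicontinuity gives a bound $\varepsilon\cdot 2^{-j}$ on the high-frequency remainder---does not substitute for this: the complementary low-frequency piece then lives on a box of side $M_j$, the ``crude $L^1_\xi$'' estimates there lose powers of $M_j$, and with $j!$ trees the sum diverges. Summability must come from the normal-form denominators, not from equicontinuity; the paper uses equicontinuity only inside the $\EE_\delta^{(j)}$ analysis with a \emph{$\delta$-dependent} threshold $\delta^{-2/5+\varepsilon_0}$ (see \eqref{PE7c}), after summability in $j$ is already secured.

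Second, you are missing the time subdivision and the difference structure that closes the estimate. The paper's multilinear bounds are \emph{difference} estimates of the form
\[
\big\|\NN_0^{(j)}(\vv)-\NN_0^{(j)}(\vvd)\big\|_{L^2_\xi} \le C K^{-4j\theta}\,R^{j}\,\|\vv-\vvd\|_{L^2_\xi},
\]
which, summed in $j$ with $K^{4\theta}>R$, give $\tfrac14\|\P_{\le N}(\vv-\vvd)\|$ plus small remainders; the $\NN_1^{(j)}$ sum is handled similarly with an extra factor $\tau K^6$. One then partitions $[0,T]$ into subintervals $I_\ell$ of length $\tau=\tau(R)$ small enough to absorb these fractions, obtains the recursion $\|\P_{\le N}(\vv-\vvd)\|_{L^\infty_{I_\ell}L^2} \le 2\|\P_{\le N}(\vv-\vvd)(\ell\tau)\|_{L^2} + \tfrac34\varepsilon'$, and iterates over $\ell=0,\dots,[T/\tau]$. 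Your ``first $\delta\to 0$, then $N,M$ large'' order of limits does not reproduce this feedback loop, and without it the difference $\vv-\vvd$ reappears on the right-hand side with a coefficient you cannot make small over the full interval $[0,T]$.
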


In Section \ref{SEC:NF}, we present a proof of Proposition \ref{PROP:low}
by 
implementing
a perturbative argument, based on an infinite iteration of normal form reductions
for KdV.
We note that, in order to control the high frequency parts
on each multilinear term, arising in normal form reductions, 
we need to apply
 the weakly uniform 
equicontinuity (Proposition \ref{PROP:EC}), 
but with much higher frequency cutoff; see \eqref{PE7c}.

\medskip

We now present a proof of Theorem \ref{THM:1} by 
assuming Propositions \ref{PROP:EC} and \ref{PROP:low}.

\begin{proof}[Proof of Theorem \ref{THM:1}]
Given  $v_0 \in L^2(\M)$, 
let $v$, $\{v_{0, \dl}\}_{0 <  \dl \le 1}$, and $v_\dl$ be as above.
Let  $\vv$ and $\vvd$ be the interaction representations 
of $v$ and $v_\dl$ as in \eqref{inter}, respectively.

Fix large $T > 0$ and small $\eps> 0$. 
Our goal is to prove the bound
\eqref{LA6}.
From \eqref{inter}
and the unitarity in $L^2(\M)$ of $S_\dl(t)$, we have 
\begin{align}
\begin{split}
\| v - v_\dl \|_{C_T L^2_x} 
&
\le 
\sup_{|t|\le T}\| (S(t) - S_\dl(t) ) \P_{\le N} \vv(t) \|_{L^2_x} \\
&\quad  +\| \P_{\le N} (\vv - \vvd ) \|_{C_T L^2_x} 
+
\| \P_{> N} (v- v_\dl  ) \|_{C_T L^2_x}.
\end{split}
\label{PF1}
\end{align}

\noi
From Proposition \ref{PROP:EC}, 
there exist $N_1 = N_1\big(\eps, \|v_0\|_{L^2}\big) \gg1$
and small $\dl_1 = \dl_1\big(\eps, \|v_0\|_{L^2}\big)> 0$ 
such that 
\begin{align}
\| \P_{> N} (v- v_\dl  ) \|_{C_T L^2_x} < \frac \eps 3
\label{PF2}
\end{align}

\noi
for any $N \ge N_1$ and $0 < \dl \le \dl_1$.
From Proposition \ref{PROP:low}, 
we can then choose 
 $N=N\big(\eps, T,  \|v_0\|_{L^2}\big) \gg N_1$
and small 
$\dl_2 = \dl_2\big(\eps, T, N,   \|v_0\|_{L^2}\big) >0$
such that 
\begin{align}
\| \P_{\le N  } (\vv - \vv_\dl ) \|_{C_T L^2_x}  
< \frac \eps3
\label{PF3}
\end{align}

\noi
for any $0 < \dl \le \dl_2$.

We now consider the first term on the right-hand side of \eqref{PF1}.
It follow from the uniform continuity of $\vv$ on the
compact time interval $[-T, T]$
with 
 the unitarity in $L^2(\M)$ of $S(t)$ and $S_\dl(t)$
 that there exist $M \in \N$ 
 and $\{t_j\}_{j = 1}^M \subset [-T, T]$ such that 
\begin{align}
\begin{split}
& \sup_{|t|\le T}\| (S(t) - S_\dl(t) ) \P_{\le N} \vv(t) \|_{L^2_x} \\
& \quad
<  \max_{j = 1, \dots, M}\| (S(t_j) - S_\dl(t_j) ) \P_{\le N} \vv(t_j) \|_{L^2_x} 
+ \frac \eps 6.
\end{split}
\label{PF4}
\end{align}

\noi
From \eqref{lin1}, the mean value theorem, 
\eqref{L1}, 
and 
Lemma \ref{LEM:multip}\,(ii), we have
\begin{align*}
\big| \Ft_x\big( (S(t) -  S_\dl(t) ) f \big) (\xi) \big|
= 
|t\xi| |\xi^2 - \L_\dl(\xi) |
|\ft f(\xi)|
\too 0 
\end{align*}
as $\dl\to0$
for each fixed $t \in \R$ and $ \xi \in \ft \M$.
Thus, it follows from 
the dominated convergence theorem
that there exists small $\dl_3  = \dl_3(\eps, T, v_0)> 0$ such that 
\begin{align}
\max_{j = 1, \dots, M}\| (S(t_j) - S_\dl(t_j) ) \P_{\le N} \vv(t_j) \|_{L^2_x} 
< \frac \eps 6
\label{PF5}
\end{align}

\noi
for any $0 < \dl < \dl_3$.
Due to the use of the dominated convergence theorem, 
$\dl_3$ depends on the profile of the initial data $v_0$
(and not just on its $L^2$-norm).

Therefore, putting \eqref{PF1}, 
\eqref{PF2}, \eqref{PF3}, 
\eqref{PF4}, and \eqref{PF5}
together, we obtain
\begin{align*}
\|v-  v_\dl \|_{C_T L^2_x} <  \eps
\end{align*}

\noi
for any $0 < \dl < \dl_*
 := \min(\dl_1, \dl_2, \dl_3)$.
This proves \eqref{LA6}.
\end{proof}

\section{High frequency analysis: weakly uniform equicontinuity}
\label{SEC:EC}

In this section, we establish
the weakly uniform (in small $\dl$) equicontinuity
(Proposition~\ref{PROP:EC}).
For this purpose, we first recall some notations
and results from \cite{KVZ, HKV}.

For an operator $A$ on $L^2(\M)$ with a continuous  integral kernel $K(x,y)$, 
we  define its trace by
\[\tr(A)=\int_{\M} K(x,x) dx.\]

\noi
In particular, if $A$ is a Hilbert-Schmidt operator with an integral kernel $K(x,y)\in L^2(\M^2)$, 
then we have 
\[\tr (A^2) = \iint_{\M^2} K(x,y) K(y,x) dxdy.\]

\noi
We also set
\begin{align*}
\| A \|_{ \If_2}^2 =  \tr(A^* A) = \iint_{\M^2} | K(x,y)|^2 dxdy.
\end{align*}

\noi
In particular, if $A$ is self-adjoint, 
then we have 
\begin{align}
\tr (A^2) = \| A \|_{ \If_2}^2.
\label{AL1}
\end{align}

\noi
Recall from   \cite[Lemma 1.4]{KVZ} that
\begin{align}
|\tr(A_1 \cdots A_k)| \leq \prod_{j = 1}^k \|A_j\|_{\If_2}
\label{AL2}
\end{align}

\noi
for an integer $k \ge 2$.

In the following, we summarize important properties of $\al(\kk; u)$ 
defined in \eqref{LA2}.
We first note  from \eqref{AL1} that    the leading term (corresponding to $j = 2$)
of the series expansion 
of $\al(\kk; u)$ 
in~\eqref{LA2} is given by 
\begin{align}
\frac 12 
\tr\Big\{
\big(\sqrt{R_\dl(\kk)}u
\sqrt{R_\dl(\kk)}\big)^2
\Big\}
= 
\frac 12 
\big\|\sqrt{R_\dl(\kk)}u
\sqrt{R_\dl(\kk)}\big\|_{\mathfrak{I}_2(\M)}^2.
\label{AL3}
\end{align}

\noi
Given $0 < \dl < \infty$, 
define $a_\dl (\xi)$ to be the Fourier multiplier
of the inverse of the free resolvent $R_\dl(\kk)$ in \eqref{LA3}
with $\kk = 0$, given by 
\begin{align*}
a_\dl (\xi) = \xi + \tfrac 1{2\dl}(e^{-2\dl \xi} - 1), \quad \xi \in \ft \M.
\end{align*}

\noi
The next lemma summarizes the basic properties
of the leading term in \eqref{AL3}.
See
\cite[Lemma~2.4 and Proposition 4.1]{HKV}
for the proof.
In the periodic setting, our convention
of the domain and the Fourier transform is different
from that in \cite{HKV}.
In particular, Plancherel's identity \eqref{int2}
and the identity \eqref{int3}
give the factor of $2\pi$ in 
\eqref{AL6} even in the periodic setting;
see, for example, for the proof of 
\cite[Lemma 3.2]{OW1}
for an analogous computation.

\begin{lemma}\label{LEM:AL1}
\textup{(i)}
For $\dl, \kk > 0$, we have 
\begin{align}
\big\|\sqrt{R_\dl(\kk)}u
\sqrt{R_\dl(\kk)}\big\|_{\mathfrak{I}_2(\M)}^2
= \int_{\ft \M} F(\xi; \kk, \dl)|\ft u(\xi)|^2 d\xi, 
\label{AL5}
\end{align}

\noi
where $F(\xi; \kk, \dl)$ is defined by 
\begin{align}
F(\xi; \kk, \dl)
= \frac 1{2\pi}\int_{\ft \M} \big(a_\dl(\eta) + \kk\big)^{-1}
\big(a_\dl(\xi+ \eta) + \kk\big)^{-1} d\eta.
\label{AL6}
\end{align}

\noi
Then, the function
$F(\xi; \kk, \dl)$ is smooth, non-negative, and even in $\xi$, 
satisfying
\begin{align}
F(\xi;\kk, \dl) \sim 
\Big\{\sqrt{\tfrac{1 + \dl \kk }{\dl \kk}}
+ \log\big(1 + \tfrac{\dl|\xi|}{1+ \dl \kk}\big)\Big\}\Big/
\Big\{\tfrac {\dl\xi^2}{1+\dl |\xi|} + \kk\Big\}.
\label{AL7}
\end{align}

\smallskip

\noi
\textup{(ii)}
Let $u$ be a smooth solution to ILW \eqref{ILW1}.
Then, we have
\begin{align*}
\frac d {dt} \al(\kk; u(t)) = 0, 
\end{align*}

\noi
provided that $\kk > 0$
is sufficiently large such that 
\begin{align}
\big\|\sqrt{R_\dl(\kk)}u(0)
\sqrt{R_\dl(\kk)}\big\|_{\mathfrak{I}_2(\M)}^2
< \tfrac 1{36}.
\label{AL8}
\end{align}

\end{lemma}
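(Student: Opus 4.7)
For part (i), the first step is to recognize that $R_\dl(\kk)$ is the Fourier multiplier with real-valued symbol $(a_\dl(\xi)+\kk)^{-1}$, strictly positive for $\kk>0$ since $a_\dl(\xi)\ge 0$ (as $a_\dl$ is strictly convex with unique minimum $a_\dl(0)=0$). Hence $R_\dl(\kk)$ is a positive self-adjoint operator on $L^2(\M)$, and $\sqrt{R_\dl(\kk)}$ is the Fourier multiplier with symbol $(a_\dl(\xi)+\kk)^{-1/2}$. Using \eqref{int3}, the operator $\sqrt{R_\dl(\kk)}\,u\,\sqrt{R_\dl(\kk)}$ has integral kernel
\[ K(x,y) = \frac{1}{2\pi}\iint_{\ft\M\times\ft\M}\frac{\ft u(\eta_1-\eta_2)\,e^{i(x\eta_1-y\eta_2)}}{\sqrt{(a_\dl(\eta_1)+\kk)(a_\dl(\eta_2)+\kk)}}\,d\eta_1\,d\eta_2. \]
Applying Plancherel in both $x$ and $y$ to $\iint|K(x,y)|^2\,dxdy$ and then substituting $\xi=\eta_1-\eta_2$, $\eta=\eta_2$ separates out the $\xi$-integral and identifies the weight as $F(\xi;\kk,\dl)$ of \eqref{AL6}, yielding \eqref{AL5}. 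Non-negativity of $F$ is immediate from \eqref{AL6}; evenness follows from chaining the substitutions $\eta\mapsto -\xi-\eta$ and $\eta\mapsto-\eta$ in \eqref{AL6}; smoothness in $\xi$ comes from differentiation under the integral, justified once integrability is in hand from \eqref{AL7}.

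The asymptotic \eqref{AL7} is the analytic heart of the lemma, and I would follow the case analysis of \cite{HKV}. The multiplier $a_\dl(\eta)$ satisfies $a_\dl(\eta)\sim \dl\eta^2$ for $|\eta|\les \dl^{-1}$ and $a_\dl(\eta)\sim |\eta|$ for large positive $\eta$ (the negative side grows exponentially and only strengthens the bound). The plan is to partition the $\eta$-integral in \eqref{AL6} into three regions: both $\eta$ and $\xi+\eta$ near zero — producing the factor $\sqrt{(1+\dl\kk)/(\dl\kk)}$ coming from the resolvent of $-\dl\dx^2+\kk$ on the small-frequency regime; an intermediate band of length $\sim|\xi|$ lying between $0$ and $-\xi$ — producing the logarithm $\log(1+\dl|\xi|/(1+\dl\kk))$; and the tail — producing the denominator scale $\tfrac{\dl\xi^2}{1+\dl|\xi|}+\kk$ as the effective weight at frequency $|\xi|$. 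Summing these three contributions and matching them against \eqref{AL7} completes part (i).

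For part (ii), I use the Lax structure \eqref{LA1}. The operator ODE $\partial_t U(t)=P_{u(t)}U(t)$ with $U(0)=I$ can be solved for smooth data, and since $P_{u(t)}$ is antiself-adjoint on $L^2(\M)$ (each summand in \eqref{LA1} is antiself-adjoint, using that $e^{2i\dl\dx}$ has real symbol and commutes with $\dx$), the family $U(t)$ is unitary. The Lax equation then gives the isospectral representation $L_{u(t)}=U(t)L_{u(0)}U(t)^{-1}$. By the cyclic property of the trace, each $\tr\{(\sqrt{R_\dl(\kk)}\,u(t)\,\sqrt{R_\dl(\kk)})^j\}$ can be rewritten — via $L_u+\kk=(L_0+\kk)(I\pm R_\dl(\kk)u)$ and the Neumann expansion of $(L_u+\kk)^{-1}$ — as a spectral quantity of $L_{u(t)}$ (a log-perturbation-determinant), hence conserved under the isospectral flow. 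The corresponding $j=1$ trace $\tr(R_\dl(\kk)u)=R_\dl(\kk)(0,0)\int_\M u\,dx$ is independently conserved by mass conservation of ILW, so the renormalized series \eqref{LA2} (starting at $j=2$) is itself conserved. Absolute convergence of \eqref{LA2} comes from \eqref{AL2} and \eqref{AL1}, which give the pointwise bound $|\tr\{(\sqrt{R_\dl}u\sqrt{R_\dl})^j\}|\le \|\sqrt{R_\dl}u\sqrt{R_\dl}\|_{\If_2}^j$; the margin $<\tfrac{1}{36}$ in \eqref{AL8} is chosen so that, by continuity of the HS norm along the smooth flow, this bound persists with room on a neighborhood of $t=0$, which legitimizes term-by-term differentiation in $t$ and yields $\frac{d}{dt}\al(\kk;u(t))=0$; a bootstrap extends the conservation.

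The main obstacle is the asymptotic \eqref{AL7}: the kernel identity in (i) is a direct computation, and the conservation in (ii) is conceptually standard once the Lax pair \eqref{LA1} is available, but \eqref{AL7} has no $\eta$-symmetry to exploit and must interpolate among the four scales $|\xi|$, $\kk$, $\dl^{-1}$, $\sqrt{\kk/\dl}$, so a careful region-by-region analysis is unavoidable.
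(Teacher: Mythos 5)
The paper does not give a proof of this lemma; it simply cites \cite[Lemma~2.4 and Proposition~4.1]{HKV} and remarks on the $2\pi$ normalization in the periodic convention. So the right comparison is against what HKV do, and measured that way your part (i) is essentially correct: identifying $\sqrt{R_\dl(\kk)}$ as the multiplier with symbol $(a_\dl(\xi)+\kk)^{-1/2}$, writing out the kernel, computing $\|\cdot\|_{\If_2}^2$ by Plancherel in both variables, and changing variables to peel off the $\xi$-integral and identify $F(\xi;\kk,\dl)$ is the standard route. (For evenness a single shift $\eta\mapsto\eta+\xi$ already does it; your chain of two substitutions is fine but unnecessary.) The region-by-region analysis of \eqref{AL7} along the scales $|\xi|$, $\kk$, $\dl^{-1}$, $\sqrt{\kk/\dl}$ is indeed what is needed, and you correctly flag this as the technical core.

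Part (ii), as you have written it, has a genuine gap. You argue: the full log-perturbation-determinant is conserved by isospectrality, the $j=1$ trace $\tr(R_\dl(\kk)u)=R_\dl(\kk)(0,0)\int u\,dx$ is conserved by mass conservation, hence the renormalized series $\al$ starting at $j=2$ is conserved. But the $j=1$ trace is not a finite quantity: on $\R$ the diagonal $R_\dl(\kk)(0,0)=\tfrac{1}{2\pi}\int(a_\dl(\eta)+\kk)^{-1}d\eta$ diverges logarithmically (since $a_\dl(\eta)\sim\eta$ as $\eta\to+\infty$), and on $\T$ with mean-zero $u$ it is the indeterminate $\infty\cdot 0$. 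Consequently the full $\log\det$ is also not finite, and the step ``conserved minus conserved equals conserved'' is formal. Your phrasing that ``each $\tr\{(\sqrt{R_\dl}u\sqrt{R_\dl})^j\}$ can be rewritten $\dots$ hence conserved'' also reads as if the individual traces are separately conserved, which is false; they are polynomial functionals of $u$ and change along the flow, and only the (renormalized) sum is invariant. The observations that $P_{u}$ is anti-self-adjoint (your term-by-term check is correct) and that the flow is isospectral are right and conceptually motivate the result, but they do not by themselves produce the renormalized conservation law. What actually closes the argument is a direct computation of $\tfrac{d}{dt}\al(\kk;u(t))$: differentiate the $j\ge2$ terms, substitute $\dt u=\dx(\Gdl\dx u+u^2)$, and use cyclicity of the trace to reorganize the resulting sum so that the contributions cancel, never invoking the divergent $j=1$ object as a finite quantity. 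That cancellation computation is the missing step in your sketch.
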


In the following, we further collect 
useful tools
from the proof of  \cite[Theorem 1.2]{HKV}.
Fix $- \frac 12 < s < 0$
and let 
$C_s$ be as in \cite[(19) and (20)]{HKV}.
Given $K > 0$, choose $\dl_0= \dl_0(s, K)> 0$
such that 
\begin{align}
\tfrac 1{\dl_0} \ge \big (1 + \tfrac{\dl_0^2}{36}C_sK^2\big)^\frac{1}{1-2|s|}.
\label{BL0a}
\end{align}

\noi
Then, for any $f \in H^s(\M)$ with $\|f\|_{H^s} \le K$
and $0 < \dl < \dl_0$, we have
\begin{align}
\big\|\sqrt{R_\dl(\kk)}f
\sqrt{R_\dl(\kk)}\big\|_{\mathfrak{I}_2(\M)}^2
< \tfrac 1{36}
\label{BL0b}
\end{align}

\noi
for any $\kk > \dl^{-1}$, 
verifying the condition \eqref{AL8} in Lemma \ref{LEM:AL1}\,(ii).
Now, define $\mu_K = \mu_K(s) > 0$ by setting
\begin{align}
\mu_K^2 = \big(1 + \tfrac 1{36}C_s K^2\big)^\frac{1}{\frac 32 - |s|}.
\label{BL0c}
\end{align}

\noi
Then, as observed in the proof of 
\cite[Theorem 1.2]{HKV}, 
the bound \eqref{BL0b} remains true
for the additional range $\mu_K^2 \dl \le \kk \le \dl^{-1}$
(with $\|f\|_{H^s} \le K$
and
$0 < \dl < \dl_0$).

Now, suppose that $\mu, \dl > 0$ satisfy
\begin{align}
\mu \ge \mu_K, \quad 0 < \dl < \dl_0, 
\quad \text{and}\quad
\mu < \dl^{-1}.
\label{BL0d}
\end{align}

\noi
Then, it follows from the proof of \cite[Theorem~1.2]{HKV}
that 
\begin{align}
\begin{split}
\frac{1}{\dl^2(\mu^{2|s|} + |\xi|^{2|s|})}
& \sim_s 
\int_{\mu^2\dl}^{\dl^{-1}}
F(\xi, \kk, \dl) \dl^{-s - \frac 32} \kk^{s+ \frac 12} d\kk\\
& \quad + 
\int_{\dl^{-1}}^\infty
F(\xi, \kk, \dl) \dl^{-2} \kk^{2s} d\kk.
\end{split}
\label{BL1}
\end{align}

\medskip

We are now ready to present a proof of Proposition \ref{PROP:EC}.

\begin{proof}[Proof of Proposition \ref{PROP:EC}]

The $\dl = 0$ case, corresponding to the KdV case,  was already treated in \cite[Proposition A.3\,(c)]{KV19}.
While 
\cite[Proposition A.3\,(c)]{KV19} is stated for the fifth order KdV, it also applies
to KdV~\eqref{KDV} since they shared the same
(logarithm of the renormalized) perturbation determinant.

Let $E \subset L^2(\M)$
be  bounded and equicontinuous in $L^2(\M)$
such that 
\begin{align}
\sup_{f \in E}
\|  f\|_{L^2} \le K
\label{BL2}
\end{align}

\noi
for some $K > 0$.
Fix small $\eps > 0$.
Then, it follows from the equicontinuity of $E$ in $L^2(\M)$ that there exists 
$M = M(\eps) \gg1 $ such that 
\begin{align}                                     
\sup_{f \in E}
\| \P_{> M} f\|_{L^2} < \frac \eps 4.
\label{BL3}
\end{align}

\noi
In the following, 
fix $- \frac 12 < s < 0$
and
we assume that $\mu, \dl > 0$ satisfy \eqref{BL0d}.
Moreover, we suppress $s$-dependence for notational simplicity.

Fix  $f \in E$.
Given $0 < \dl \le 1$, 
 let $v_\dl$
be the solution to the scaled ILW \eqref{sILW1}
(with the depth parameter $\dl$)
with $v_\dl |_{t = 0} = f$.
Then, from 
\eqref{BL2} and   \eqref{BL3}, we have
\begin{align}
\begin{split}
\int \frac{|\xi|^{2|s|}}{\mu^{2|s|} + |\xi|^{2|s|}} |\ft f(\xi)|^2d\xi 
& = \intt_{|\xi| \le \sqrt \mu} \frac{|\xi|^{2|s|}}{\mu^{2|s|} + |\xi|^{2|s|}} |\ft f(\xi)|^2d\xi \\
& \quad
+  \intt_{|\xi| >  \sqrt \mu} \frac{|\xi|^{2|s|}}{\mu^{2|s|} + |\xi|^{2|s|}} |\ft f(\xi)|^2d\xi \\
& \le \frac{1}{\mu^{|s|}} K^2
+ \| \P_{> \sqrt \mu} f\|_{L^2}^2\\
& < \frac{\eps^2}{4}
\end{split}
\label{BL4}
\end{align}

\noi
by choosing $\mu \gg1$ sufficiently large
(which implies choosing $\dl > 0$ sufficiently small in view of \eqref{BL0d}), depending on $\eps > 0$
and $K > 0$.
Let $u_\dl$ be the solution to ILW \eqref{ILW1}
(with the depth parameter $\dl$)
with $u_\dl |_{t = 0} = \frac\dl3f$.
Namely, 
recalling the scaling~\eqref{scale1}, 
we have
\begin{equation}
v_\dl(t,x)  = \tfrac3\dl u_\dl\big(\tfrac3\dl t, x\big).
\label{scale2}
\end{equation}

\noi
Then, from \eqref{scale2}
and the conservation of the $L^2$-norm under ILW, we have
\begin{align}
\begin{split}
& \int \frac{|\xi|^{2|s|}}{\mu^{2|s|} + |\xi|^{2|s|}} |\ft f(\xi)|^2d\xi 
 = \frac 9 {\dl^2}\int \frac{|\xi|^{2|s|}}{\mu^{2|s|} + |\xi|^{2|s|}} |\ft u_\dl(0, \xi)|^2d\xi \\
& \quad
= \frac 9 {\dl^2}\|u_\dl(0)\|_{L^2}^2
- 
\frac 9 {\dl^2}\int \frac{\mu^{2|s|}}{\mu^{2|s|} + |\xi|^{2|s|}} |\ft u_\dl(0, \xi)|^2d\xi \\
& \quad 
= \frac 9 {\dl^2}\big\|u_\dl\big(\tfrac3\dl t\big)\big\|_{L^2}^2
- 
\frac 9 {\dl^2}\int \frac{\mu^{2|s|}}{\mu^{2|s|} + |\xi|^{2|s|}} \big|\ft u_\dl\big(\tfrac3\dl t, \xi\big)\big|^2d\xi + 
A_\dl(u_\dl)(t)\\
& \quad 
=  \int \frac{|\xi|^{2|s|}}{\mu^{2|s|} + |\xi|^{2|s|}} |\ft v_\dl(t, \xi)|^2d\xi + A_\dl(u_\dl)(t), 
\end{split}
\label{BL5}
\end{align}

\noi
where $A_\dl(u_\dl)(t)$ is given by 
\begin{align}
\begin{split}
A_\dl(u_\dl)(t)
& = \frac 9 {\dl^2}\int \frac{\mu^{2|s|}}{\mu^{2|s|} + |\xi|^{2|s|}} 
\Big(
\big|\ft u_\dl\big(\tfrac3\dl t, \xi\big)\big|^2 - |\ft u_\dl(0, \xi)|^2\Big)d\xi .
\end{split}
\label{BL6}
\end{align}

Suppose for now that 
\begin{align}
\sup_{t \in \R} 
|A_\dl(u_\dl)(t)| < \frac {\eps^2}{4}, 
\label{BL7}
\end{align}

\noi
uniformly in $f \in E$, 
provided that 
$\mu \gg1$ sufficiently large
 and $\dl> 0$ is sufficiently small, depending on $\eps > 0$
 and $K > 0$, 
(satisfying \eqref{BL0d}).
Then, 
by choosing $N \ge \mu$, 
it follows from~\eqref{BL5} with \eqref{BL4}
and \eqref{BL7} that
\begin{align}
\sup_{t \in \R} 
\| \P_{> N} v_\dl(t) \|_{L^2}^2 
\le 2 \sup_{t \in \R} 
\int \frac{|\xi|^{2|s|}}{\mu^{2|s|} + |\xi|^{2|s|}} |\ft v_\dl(t, \xi)|^2d\xi
< \eps^2.
\label{BL8}
\end{align}

\noi
Since our choice of the initial data 
 $f \in E$ for $v_\dl$ was arbitrary, 
the desired bound  \eqref{LA5}
follows from \eqref{BL8}
and 
 taking a supremum over  $f \in E$.

\medskip

It remains to prove \eqref{BL7}.
From \eqref{BL6}
and \eqref{BL1}, we have 
\begin{align}
\begin{split}
& A_\dl(u_\dl)(t)\\
& \ \ 
\sim 
\mu^{2|s|} \int_{\mu^2\dl}^{\dl^{-1}}
\bigg(\int F(\xi; \kk, \dl) 
\Big(
\big|\ft u_\dl\big(\tfrac3\dl t, \xi\big)\big|^2 - |\ft u_\dl(0, \xi)|^2\Big)d\xi 
\bigg)
\dl^{-s - \frac 32} \kk^{s+ \frac 12} d\kk
\\
& \ \  \quad + 
\mu^{2|s|} \int_{\dl^{-1}}^\infty
\bigg(\int
F(\xi; \kk, \dl)
\Big(
\big|\ft u_\dl\big(\tfrac3\dl t, \xi\big)\big|^2 - |\ft u_\dl(0, \xi)|^2\Big)d\xi 
\bigg)
 \dl^{-2} \kk^{2s} d\kk.
\end{split}
\label{CL1}
\end{align}

\noi
On the other hand, from \eqref{AL5}, \eqref{AL3}, and \eqref{LA2}, 
we have 
\begin{align*}
\int F(\xi; \kk, \dl) 
|\ft u_\dl (t, \xi)|^2d\xi 
= 
2\al(\kk; u_\dl(t)) - 
\sum_{j = 3}^\infty
\frac 2j \tr\Big\{
\big(\sqrt{R_\dl(\kk)}u_\dl(t)
\sqrt{R_\dl(\kk)}\big)^j
\Big\}.
\end{align*}

\noi
Recall from \cite{HKV} that the condition 
\eqref{BL0d} with 
\eqref{BL0a} and \eqref{BL0c}
guarantees \eqref{AL8}.
Then, it follows from 
\eqref{CL1} and Lemma~\ref{LEM:AL1}\,(ii)
with \eqref{AL2}
that 
\begin{align}
\begin{split}
& \sup_{t \in \R}|A_\dl(u_\dl)(t)|\\
& \quad
\les
\sup_{t \in \R}
\sum_{j = 3}^\infty
\frac 1j 
\mu^{2|s|} \int_{\mu^2\dl}^{\dl^{-1}}
\big\|\sqrt{R_\dl(\kk)}u_\dl(t)
\sqrt{R_\dl(\kk)}\big\|_{\mathfrak{I}_2(\M)}^j
\dl^{-s - \frac 32} \kk^{s+ \frac 12} d\kk
\\
& \quad \quad + 
\sup_{t \in \R}
\sum_{j = 3}^\infty
\frac 1j 
\mu^{2|s|} \int_{\dl^{-1}}^\infty
\big\|\sqrt{R_\dl(\kk)}u_\dl(t)
\sqrt{R_\dl(\kk)}\big\|_{\mathfrak{I}_2(\M)}^j
 \dl^{-2} \kk^{2s} d\kk\\
 & \quad 
 =: \1 + \II.
\end{split}
\label{CL3}
\end{align}

In view of \eqref{AL5} in Lemma~\ref{LEM:AL1}\,(i), 
we estimate the terms $\1$ and $\II$
by first bounding $F(\xi; \kk, \dl)$
defined in \eqref{AL6}, satisfying \eqref{AL7}.

\medskip

\noi
$\bullet$ {\bf Case 1:}
$\kk > \dl^{-1}$.
\\
\indent
This case corresponds to the second term $\II$
on the right-hand side of \eqref{CL3}.
First, suppose that $|\xi|\ges \dl^{-1}$.
Then, from \eqref{AL7}, we have
\begin{align}
F(\xi;\kk, \dl) \sim 
\frac {1  
+ \log(1 + \kk^{-1}|\xi|)}
{ |\xi| + \kk} \les \frac 1\kk.
\label{CL4}
\end{align}

\noi
Next, consider the case  $|\xi| \ll \dl^{-1}$, 
which implies $|\xi|\ll \kk$.
Then, from \eqref{AL7}, we have
\begin{align}
F(\xi;\kk, \dl) 
 \les \frac 1\kk.
\label{CL5}
\end{align}

\noi
Hence, from \eqref{AL5} in Lemma \ref{LEM:AL1}\,(i), 
\eqref{CL4}, \eqref{CL5}, 
 the conservation of the $L^2$-norm
 for $u_\dl$
with  $u_\dl |_{t = 0} = \frac\dl3f$, \eqref{BL0b}, 
and \eqref{BL2}, we have 
\begin{align}
\begin{split}
\sup_{t \in \R}
\sum_{j = 3}^\infty
\frac 1j 
\big\|\sqrt{R_\dl(\kk)}u_\dl(t)
\sqrt{R_\dl(\kk)}\big\|_{\mathfrak{I}_2(\M)}^j
& \les 
\sup_{t \in \R}
\sum_{j = 3}^\infty
\kk^{-\frac j2}
\|u_\dl(t)\|_{L^2}^j\\
& \les \kk^{-\frac 32} \dl^3K^3, 
\end{split}
\label{CL6}
\end{align}

\noi
provided that $\dl > 0$ is sufficiently small, depending on $K > 0$.
Therefore, from \eqref{CL3} and \eqref{CL6} (recall that $-\frac 12 < s < 0$) with  \eqref{BL0d}, we obtain
\begin{align}
\II \les \mu^{2|s|}\dl K^3 \int_{\dl^{-1}}^\infty
 \kk^{2s - \frac 32} d\kk
\sim \dl^\frac 32 K^3\ll \eps^2, 
\label{CL7} 
\end{align}

\noi
provided that $\dl > 0$ is sufficiently small, depending on $\eps > 0$ and $K > 0$.

\medskip

\noi
$\bullet$ {\bf Case 2:}
$\mu^2 \dl < \kk \le  \dl^{-1}$.
\\
\indent
This case corresponds to the first term $\1$
on the right-hand side of \eqref{CL3}.
First, suppose that $|\xi|\ges \dl^{-1}$, 
which implies $|\xi|\ges \dl^{-1} \ge  \kk$.
Then, from \eqref{AL7}, we have
\begin{align}
F(\xi;\kk, \dl) \sim 
\frac {(\dl \kk)^{-\frac 12}  
+ \log(1 + \dl |\xi|)}
{ |\xi| + \kk} \les \frac 1{\dl^\frac 12\kk^\frac 32}
+ \dl.
\label{CL8}
\end{align}

\noi
Next, consider the case  $|\xi| \ll \dl^{-1}$.
In this case, we have 
\begin{align}
F(\xi;\kk, \dl) \sim 
\frac {(\dl \kk)^{-\frac 12}  }
{\dl  |\xi|^2 + \kk} \les \frac 1{\dl^\frac 12\kk^\frac 32}.
\label{CL9}
\end{align}

\noi
Hence, proceeding as in \eqref{CL6} with \eqref{CL8} and \eqref{CL9}, 
we have 
\begin{align}
\sup_{t \in \R}
\sum_{j = 3}^\infty
\frac 1j 
\big\|\sqrt{R_\dl(\kk)}u_\dl(t)
\sqrt{R_\dl(\kk)}\big\|_{\mathfrak{I}_2(\M)}^j
& \les 
(\dl^\frac 94 \kk^{-\frac 94}
+ \dl^{\frac 92 }) K^3
\label{CL10}
\end{align}

\noi
provided that $\dl > 0$ is sufficiently small, depending on $K > 0$.
Therefore, from \eqref{CL3} and \eqref{CL10} (recall that $- \frac 12 < s < 0$) with  \eqref{BL0d}, we obtain
\begin{align}
\begin{split}
\1 
& \les \mu^{2|s|}\dl^{-s+ \frac 34} K^3 
 \int_{\mu^2\dl}^{\dl^{-1}}
\kk^{s- \frac 74} d\kk
+ 
\mu^{2|s|} 
\dl^{-s+ 3  }K^3
\int_{\mu^2\dl}^{\dl^{-1}}
 \kk^{s+ \frac 12} d\kk
\\
& \les \mu^{-\frac 32}
+ \mu^{-2s} \dl^{- 2s+ \frac 32}
\le  \mu^{-\frac 32}
+ \dl^{\frac 32}
\ll \eps^2, 
\end{split}
\label{CL11} 
\end{align}

\noi
provided that 
$\mu \gg 1$ is sufficiently large and 
$\dl > 0$ is sufficiently small, depending on $\eps > 0$ and $K > 0$.

\medskip

Therefore, putting  \eqref{CL3}, 
 \eqref{CL7}, and
 \eqref{CL11} together, we obtain \eqref{BL7}.
This concludes the proof of Proposition \ref{PROP:EC}.
\end{proof}

\section{Low frequency analysis: normal form method}
\label{SEC:NF}

In this section, we present a proof of Proposition \ref{PROP:low}
based on an infinite iteration of normal form reductions.
More precisely, we 
implement a  novel perturbative argument
based on an infinite iteration of normal form reductions
for KdV.
Namely, 
we apply 
normal form reductions 
only to KdV Duhamel integral terms; 
when we encounter 
the scaled ILW  Duhamel integral operator, 
we replace it by that for KdV and view the difference
as an error term (see
$\EE_\dl^{(j)}$
in \eqref{ER1}
and \eqref{NF13})
which is then handled by 
the  convergence of the resonance function
$\Xi_\dl (\bar \xi)$
for the scaled ILW (see \eqref{Xi1})
to the resonance function
$\Xi_{\KDV} (\bar \xi)$
for KdV, 
together with
 the weakly uniform 
equicontinuity (Proposition \ref{PROP:EC})
controlling a very high frequency part;
see \eqref{PE7c}.
An infinite iteration of normal form reductions
was first introduced in \cite{GKO}
and was further developed in \cite{OW,OST, KOY, Ki2, Ki3}. 
As mentioned in Remark \ref{REM:NF1}, 
it has been applied in various contexts.
As such, we will keep our presentation brief
on 
what is now standard,  and 
aim to emphasize the novel perturbative aspect 
of the argument.

\medskip

In the periodic setting, 
in order to avoid the problem at the zeroth frequency, 
it is crucial to work with mean-zero functions, 
(which makes the periodic KdV non-resonant).
In view of the conservation of the spatial mean, 
the following Galilean transform  (with $\mu (u) = \frac 1{2\pi}\int_\T u(0, x) dx$):
\begin{align*}
\wt u(t, x) 
= u (t, x- 2\mu(u) t) - \mu (u)
\end{align*}

\noi
allows us to  reduce the problem to the mean-zero case.
Hence, we assume that, in the periodic case, 
all the functions have spatial mean zero in the following.
For this reason, we set 
\begin{align*}
\Z_* = \Z \setminus\{0\}\qquad \text{and}
\qquad 
\ft \M_* = \begin{cases}
 \R, & \text{if }\M = \R, \\
\Z_*,& \text{if } \M = \T.
\end{cases}
\end{align*}

We emphasize that our goal is {\it not} to prove unconditional uniqueness of solutions.
Hence, by a density argument, 
we may assume that  solutions to KdV and the scaled ILW
are nice, namely, Schwartz solutions in the real line case
and $C^\infty$-solutions in the periodic case, 
allowing us to justify all the formal steps
appearing  in  normal form reductions.\footnote{As in the usual
implementation of an infinite iteration of normal form reductions, 
the $L^2$-regularity
is indeed sufficient to justify all the formal steps.
We, however, do not worry about this point.}
Lastly, for simplicity of the presentation, 
we restrict our attention to 
positive times in the remaining part of this paper.

In 
Subsection \ref{SUBSEC:NF1}, 
we describe our perturbative approach
in the first step of normal form reductions.
After introducing notations, 
we then go over the general step in 
Subsection \ref{SUBSEC:NF2}.
In Subsection \ref{SUBSEC:NF3}, 
we first state
key multilinear estimates
and convergence results
 (Proposition~\ref{PROP:NN})
whose proof is postponed to 
Sections~\ref{SEC:per}
and 
\ref{SEC:Euc}.
We then 
present a proof of Proposition~\ref{PROP:low}.

\subsection{Perturbative approach: first step}
\label{SUBSEC:NF1}

Fix  $v_0 \in L^2(\M)$, 
and let  $\{v_{0, \dl}\}_{0 <  \dl \le 1}\subset L^2(\M)$
such that 
$v_{0, \dl}$ converges to $v_0$ in $L^2(\M)$ as $\dl \to 0$.
Then, 
let $v, v_\dl   \in C(\R; L^2(\M))$ be the  solution to KdV~\eqref{KDV}
and the scaled ILW~\eqref{sILW1}
with $v|_{t = 0} = v_0$
and $v_\dl|_{t = 0} = v_{0, \dl}$, respectively.
We denote by  $\vv$ and $\vvd$
 the interaction representations  of  $v$ and $v_\dl$ defined in
\eqref{inter}.

 From  \eqref{KDV}
 and 
 \eqref{sILW1}
 with 
 \eqref{inter}, 
the interaction representations $\vv$ and $\vvd$ 
satisfy the following integral equations:
\begin{align}
\begin{split}
\ft\vv(t, \xi) 
&= 
\ft v _0(\xi)
+ \int_0^t 
\int_{ \G(\xi)} 
e^{it'\Xi_\KDV(\bar \xi)} i \xi  \, \ft\vv(t', \xi_1) \ft\vv(t', \xi_2)  d\xi_1dt', \\
\ftvvd(t, \xi) 
&= 
\ft v_{0, \dl}(\xi)
+ \int_0^t 
\int_{ \G(\xi)} 
e^{it'\Xi_\dl(\bar \xi)} i \xi  \, \ftvvd(t', \xi_1) \ftvvd(t', \xi_2) d\xi_1dt'
\end{split}
\label{NF1}
\end{align}

\noi
for each  $\xi\in\ft \M_*$ , 
where 
 $\G(\xi)$ is defined as
\begin{align*}
\G(\xi)
&
=
\big\{ 
(\xi_1, \xi_2) \in (\ft\M_*)^2:
\xi = \xi_1 + \xi_2,
\ \xi \xi_1 \xi_2 \ne 0 
\big\}, 
\end{align*}
with the last condition being relevant in the periodic case  $\M = \T$, 
and we used the following short-hand notation:
\[\int_{ \G(\xi)} f(\bar \xi) d\xi_1 
:=
\int_{ (\xi_1, \xi_2) \in \G(\xi)} f(\bar \xi) d\xi_1 .\]

\noi
Here, 
$\Xi_\KDV(\bar \xi) $ and $\Xi_\dl(\bar \xi) $
denote the resonance functions for KdV and 
the scaled ILW, respectively, given by
\begin{align}
\begin{split}
\Xi_{\KDV} (\bar \xi) 
& = \Xi_{\KDV} (\xi,\xi_1,\xi_2)
 = 
- \xi^3 + \xi_1^3 + \xi_2^3 \\
& =- 3 \xi\xi_1\xi_2, 
\\
\Xi_\dl (\bar \xi)
& = 
\Xi_\dl (\xi, \xi_1,\xi_2)
=
- \xi \L_\dl(\xi) + \xi_1 \L_\dl (\xi_1) + \xi_2 \L_\dl (\xi_2)
\\
&
=
- \xi^3 ( 1 - h(\dl, \xi)) + \xi_1^3 (1 - h(\dl,\xi_1)) + \xi_2^3 (1 - h(\dl, \xi_2))
\\
&
= \Xi_\KDV(\xi,\xi_1,\xi_2) +
\Big( \xi^3 h(\dl,\xi) - \xi_1^3 h(\dl, \xi_1) - \xi_2^3 h(\dl, \xi_2) \Big), 
\end{split}
\label{Xi1}
\end{align}

\noi
where $\L_\dl(\xi)$ and $h(\dl, \xi)$ are as in \eqref{L1} and \eqref{hdef}, while
the last equality for $\Xi_{\KDV}(\bar \xi)$ holds under the condition $\xi=\xi_1 + \xi_2$.  
Recall from \eqref{limh} that 
\begin{align}
\lim_{\dl \to 0}\Xi_\dl (\xi, \xi_1,\xi_2)
= \Xi_\KDV (\xi, \xi_1,\xi_2)
\label{NF2}
\end{align}

\noi
for each   $\xi, \xi_1, \xi_2 \in \ft \M_*$.
Let  $\varphi_\dl$ denote the difference 
of the oscillatory factors in~\eqref{NF1}
by setting 
\begin{align}
\label{phi1}
\varphi_\dl(t, \xi, \xi_1,\xi_2)  =  e^{it \Xi_\dl(\xi,\xi_1,\xi_2)}
- e^{it \Xi_\KDV(\xi,\xi_1,\xi_2)}.
\end{align}

\noi
Then, from \eqref{phi1} and  \eqref{NF2}
we have the following pointwise convergence:
\begin{align}
\lim_{\dl\to0} \varphi(t, \xi, \xi_1,\xi_2) = 0 
\label{phi2}
\end{align}

\noi
for each  $t\in\R$ and $\xi, \xi_1, \xi_2 \in \ft \M_*$.

Define the  bilinear operator $\NN^{(1)}$  by setting
\begin{align}\label{NN1}
 \NN^{(1)} ( f_1, f_2) (\xi) =  
 \int_{ \G(\xi)} e^{it \Xi_{\KDV}(\bar \xi) } i \xi \ft f_1(\xi_1) \ft f_2(\xi_2) d\xi_1. 
\end{align}

\noi
When two arguments coincide, 
we simply set  $\NN^{(1)}(f) = \NN^{(1)}(f,f)$. 
Moreover, due to the presence of 
the time-dependent phase factor $e^{i t\Xi_\KDV(\bar \xi)}$, 
the bilinear operator  $\NN^{(1)}$
is non-autonomous (depending on $t$). 
For simplicity of notations, however, we suppress such $t$-dependence
when there is no confusion.
When we need to emphasize its dependence on $t$ (as in Proposition \ref{PROP:NN}), 
we write 
\begin{align*}
 \NN^{(1)} (t)(f) (\xi).
\end{align*}

\noi
For a time-dependent function $\uu$, 
we simply set 
\begin{align*}
 \NN^{(1)} (\uu) (t, \xi)
 =  \NN^{(1)}(t) (\uu(t)) (\xi).
\end{align*}

\noi
We apply this convention for all the multilinear operators
appearing in the remaining part of this paper.
Lastly, 
we emphasize 
 that the time-dependence
of all the multilinear operators  
appears only in the phase factors.

Using \eqref{NN1}, we can write the equation for $\ftvvd$ in \eqref{NF1} as 
\begin{align}
\ftvvd(t,\xi)
=  \ft v_{0, \dl}(\xi)
+ 
\int_0^t 
\Big(
\NN^{(1)} ( \vvd) (t', \xi)
+
\EE_\dl^{(1)} ( \vvd) (t', \xi)
\Big)
dt', 
\label{NF3}
\end{align}

\noi
where the error term $\EE_\dl^{(1)} ( \vvd)$
is defined by 
\begin{align}
\label{ER1}
\begin{split}
\EE_\dl^{(1)} (f) ( \xi)
& = \int_{\G(\xi) }
 \varphi(t, \bar \xi) 
  i \xi \,  \ft f(\xi_1) \ft f(\xi_2) d\xi_1\\
&   = \int_{\G(\xi) }
 \Big( e^{it \Xi_\dl (\bar \xi)}  - e^{it \Xi_\KDV(\bar \xi)}\Big)
  i \xi \,  \ft f(\xi_1) \ft f(\xi_2) d\xi_1, 
\end{split}
\end{align}

\noi
with $ \varphi(t, \bar \xi) $ as in \eqref{phi1}.
With these notations, we can rewrite KdV \eqref{KDV}
and the scaled ILW~\eqref{sILW1}
as
\begin{align}
\label{dt}
\begin{split}
\dt \ft\vv (\xi)
&
= \NN^{(1)} (\vv) (\xi)
, 
\\
\dt \ftvvd (\xi)
&
= 
 \NN^{(1)} (\vvd) (\xi)
+
 \EE^{(1)}_\dl (\vvd) (\xi)
. 
\end{split}
\end{align}

Let us now turn to the first step of  normal form reductions. 
We point out  that 
for $\vvd$, 
we apply a normal form reduction (namely, integration by parts in time)
only
to\footnote{More precisely, 
to the non-resonant part of $\NN^{(1)}(\vvd)$.} $\NN^{(1)}(\vvd)$ in~\eqref{NF3} in  this first step.
 The error part 
$\EE_\dl^{(1)} ( \vvd)$
will be handled  in a direct manner
thanks to the low frequency cutoff $\P_{\le N}$
in \eqref{low00}
with the convergence~\eqref{phi2}
and  the weakly uniform 
equicontinuity (Proposition \ref{PROP:EC});
see Proposition \ref{PROP:NN}\,(iii).

As in \cite{GKO, OW}, 
we  divide 
 $\NN ^{(1)}$
 into the nearly resonant part 
  $\NN ^{(1)}_1$
 and the (highly) non-resonant part  $\NN ^{(1)}_2$
 as the restrictions
 of  $\NN ^{(1)}$
 onto $A_1(\xi)$ 
 and its complement
$A_1(\xi)^c$, respectively:
\begin{align}
\label{A1}
\begin{split}
A_1(\xi)
& : = 
\big\{ (\xi_1,\xi_2) \in \G(\xi)  : \  |\Xi_{\KDV}(\xi,\xi_1,\xi_2)| \le (3 K)^4 \big\}
, 
\\
A_1(\xi)^c
& : = 
\big\{ (\xi_1,\xi_2) \in \G(\xi)  : \  |\Xi_{\KDV}(\xi,\xi_1,\xi_2)| > (3 K)^4 \big\}. 
\end{split}
\end{align}

\noi
Obviously, we have 
\begin{align}
\NN^{(1)} = \NN^{(1)}_1  + \NN^{(1)}_2.
\label{S1}
\end{align}

\noi
By 
 applying a normal form reduction to the non-resonant part $\NN_2^{(1)}$
in the form of ``differentiation by parts'' \cite{BIT}, 
we have\footnote{Hereafter, we often suppress time dependence of $\uu$ and $\vv$, etc.
for notational simplicity.}
\begin{align}
\begin{split}
\NN^{(1)}_2(\uu)(t, \xi)
&
= \int_{A_1(\xi)^c} \dt \Big( \frac{e^{it \Xi_{\KDV} (\bar \xi)}}{ i \Xi_\KDV(\bar \xi)  } \Big) 
i \xi
\ft\uu(\xi_1) \ft\uu(\xi_2)
d\xi_1
\\
&
= 
\dt \bigg( \int_{A_1(\xi)^c}  \frac{e^{it \Xi_{\KDV} (\bar \xi)}}{ i \Xi_\KDV(\bar \xi)  }  
i \xi \ft\uu(\xi_1) \ft\uu(\xi_2) 
d\xi_1
\bigg)\\
& \quad - 
\int_{A_1(\xi)^c} \frac{e^{it \Xi_{\KDV} (\bar \xi)}}{ i \Xi_\KDV(\bar \xi)  } 
i \xi \dt \big( \ft\uu(\xi_1) \ft\uu(\xi_2) \big)
d\xi_1\\
&
= : \dt \NN_0^{(1)} (\uu)(t, \xi ) + \wt\NN^{(2)}(\uu)(t,\xi)
\end{split}
\label{NF3a}
\end{align}

\noi
for $\uu = \vv$ or $\vvd$.
By applying the product rule and \eqref{dt}, 
we have 
\begin{align}
\begin{split}
\wt\NN^{(2)}(\vv) (t, \xi)
&=- \sum_{\s \in S_2} \int_{A_1(\xi)^c} 
\frac{e^{it \Xi_{\KDV} (\bar \xi)}}{ i \Xi_\KDV(\bar \xi)  } 
i \xi
 \NN^{(1)} (\vv)(\xi_{\s(1)})  \ft\vv(\xi_{\s(2)}) 
d\xi_1
\\
&=: \NN^{(2)} (\vv) (t, \xi)
\end{split}
\label{NF3b}
\end{align}

\noi
for $\uu = \vv$, 
where $S_2$ is the symmetric group on $\{1, 2\}$, 
and 
\begin{align}
\begin{split}
\wt\NN^{(2)}(\vvd) (t, \xi)
&
= 
- \sum_{\s \in S_2}\int_{A_1(\xi)^c} 
\frac{e^{it \Xi_{\KDV} (\bar  \xi)}}{ i \Xi_\KDV(\bar  \xi)  } 
i \xi
 \NN^{(1)} (\vvd)(\xi_{\s(1)})  \ftvvd(\xi_{\s(2)}) 
d\xi_1
\\
&
\quad 
-  \sum_{\s \in S_2}
 \int_{A_1(\xi)^c} 
\frac{e^{it \Xi_{\KDV} (\bar  \xi)}}{ i \Xi_\KDV(\bar  \xi)  }
i \xi
 \EE_\dl^{(1)} (\vvd)(\xi_{\s(1)})  \ftvvd(\xi_{\s(2)}) 
d\xi_1
\\
&
=: \NN^{(2)}(\vvd)(t,\xi)
+ \EE^{(2)}_\dl (\vvd) (t,\xi) 
\end{split}
\label{NF3c}
\end{align}

\noi
for $\uu = \vvd$. 
Hence, 
from \eqref{NF1}, \eqref{S1}, \eqref{NF3}, \eqref{NF3a}, \eqref{NF3b}, and \eqref{NF3c}, 
we obtain
\begin{align}
\begin{split}
\ft\vv(t, \xi) 
& =  \ft v_0(\xi)
+   \NN^{(1)}_0(\vv)(t', \xi) \bigg|_0^t
+ \int_0^t 
\NN^{(1)}_1  (\vv)(t', \xi) 
+ 
\NN^{(2)}  (\vv)(t', \xi) dt'
, 
\\
\ftvvd(t, \xi) 
& =  \ft v_{0, \dl}(\xi)
+   \NN^{(1)}_0(\vvd)(t',  \xi) \bigg|_0^t
+ 
\int_0^t 
\NN^{(1)}_1  (\vvd)(t',  \xi) 
+ 
\NN^{(2)}  (\vvd)(t',  \xi) 
dt'
\\
&
\quad 
+
\sum_{j=1}^2
\int_0^t
 \EE^{(j)}_\dl (\vvd) (t',  \xi)
dt'
\end{split}
\label{NF4}
\end{align}

\noi
 after the first step of normal form reductions.
As for the term  $\NN ^{(2)}(\uu)$
 in \eqref{NF3b} and~\eqref{NF3c}, 
we need to split it into the nearly resonant and non-resonant parts
and apply a normal form reduction on the non-resonant part.
We then iterate this process indefinitely.
In the next subsection, 
 we first 
recall the notion of (binary) ordered trees and relevant  definitions
from \cite{GKO, OW2}
and then present a general step of normal form reductions.

\subsection{General step: ordered trees}
\label{SUBSEC:NF2}

We recall the notion of  ordered trees 
from \cite{GKO, OW2}
(but adapted to  binary trees).

\begin{definition}\label{DEF:tree2}\rm
(i) Let $\TT$ be  a partially ordered set $\TT$ with partial order $\le$.
Given two distinct elements $a, b \in \TT$, 
 we say that $b$ is a child of $a$ if $b \le c \le a$ implies that 
 either $c = a$ or $c = b$.
If the latter condition holds, we also say that $a$ is the parent of $b$
and write 
 $\pb(b) = a$.

\smallskip

\noi
(ii) A tree $\TT$ is a finite partially ordered set satisfying the following properties:

\smallskip
\begin{itemize}
\item Let $a_1, a_2, a_3, a_4 \in \TT$.
If $a_4 \leq a_2 \leq a_1$ and  
$a_4 \leq a_3 \leq a_1$, then we have $a_2\leq a_3$ or $a_3 \leq a_2$,

\smallskip

\item
A node $a\in \TT$ is called terminal, if it has no child.
A non-terminal node $a\in \TT$ is a node 
with  exactly two children denoted by $a_1$ and $a_2$.
In the latter case, 
we use
$\sbb(a_1) = a_2$ and 
$\sbb(a_2) = a_1$, 
where
$\sbb(a) $
 denotes the (unique) sibling node of a child $a$,

\smallskip
\item There exists a maximal element $r \in \TT$ (called the root node) such that $a \leq r$ for all $a \in \TT$.
We assume that the root node is non-terminal,

\smallskip

\item $\TT$ consists of the disjoint union of $\TT^0$ and $\TT^\infty$,
where $\TT^0$ and $\TT^\infty$
denote  the collections of non-terminal nodes and terminal nodes, respectively.
We use the following convention;
given $a \in \TT^0$, we use $a_1$ and $a_2$ to denote
its two children, 
where $a_1$ (and $a_2$) denotes
the left child (and the right child, respectively)
 in the planar graphical representation of $\TT$.

\end{itemize}

\smallskip

\noi
The number of nodes $|\TT|$ in tree $\TT$ is $2j+1$ for some $j\in\N$, where $|\TT^0| = j$ and $|\TT^\infty|=j+1$. 

\noi 
Given $j\in \N$, we use $T(j)$ to denote  the collection of trees of $j$ generations:
\begin{align*}
T(j)
 = 
\big\{
\TT: \  \TT \text{ is a tree with } |\TT| = 2j+1
\big\}.
\end{align*}

\smallskip

\noi(iii) (ordered tree) We say that a sequence $\{ \TT_j \}_{j=1}^J$ is a chronicle of $J$ generations if 

\smallskip

\begin{itemize}
\item $\TT_j \in T(j)$ for each $j=1,\ldots,J$,

\smallskip

\item $\TT_{j+1}$ is obtained by changing 
one of  the terminal nodes in $\TT_j$, 
denoted by $p^{(j)}$, 
into a non-terminal node (with two children), $j=1, \ldots, J-1$. 

\smallskip

\end{itemize}
Given a chronicle $\{\TT_j\}_{j=1}^J$ of $J$ generations, we refer to $\TT_J$ as an \textit{ordered tree} of the $J$th generation. 
We denote the collection of ordered trees of the $J$th generation by $\Tfr(J)$.
Note that the cardinality of $\mathfrak{T}(J)$ is given by 
\begin{align}
\label{TJ0}
| \Tfr(J) | = J!. 
\end{align}

\end{definition}

Roughly speaking, 
 ordered trees are the trees
 that remember how they ``grew''.
As seen in  \cite{GKO, OW}, 
this property is suitable for  encoding 
successive applications
of the product rule for differentiation.
In the following, we simply refer to an ordered tree $\TT_J$ of the $J$th generation
but 
it is understood that there is an underlying chronicle $\{ \TT_j\}_{j = 1}^J$.

\begin{definition} \label{DEF:tree3}\rm
Given an ordered tree $\TT_J \in \Tfr(J)$ 
with a chronicle $\{\TT_j\}_{j = 1}^J$, 
we define  ``projections'' $\Pi_j$ and $\pi_j$, $j = 1, \dots, J$,  from $\TT_J$
to subtrees in $\TT_J$ 
by setting
\begin{itemize}

\item[$\bullet$]
$\Pi_j(\TT_J) = \TT_j$, 

\smallskip

\item[$\bullet$]
 $\pi_1(\TT_J) = \TT_1$, 

\smallskip

\item[$\bullet$] 
$\pi_j(\TT_J)$ to be the tree 
of one generation, 
formed by the two terminal nodes in $\TT_j \setminus \TT_{j-1}$
and their parent,  $j = 2, \dots, J$.   
Intuitively speaking, 
$\pi_j(\TT_J)$ is the tree added in transforming $\TT_{j-1}$ into $\TT_j$.

\end{itemize}

\noi
We use $r^{(j)}$ to denote the root node of $\pi_j(\TT_J)$
and refer to it as the {\it $j$th root node}.
By definition, we have
\begin{align*}
r^{(j)} = p^{(j-1)}, 
\end{align*}

\noi
where 
$p^{(j-1)}$ is as in Definition
\ref{DEF:tree2}\,(iii).
Note that
$r^{(j)}$  is not necessarily a node in 
$\pi_{j-1}(\TT_J)$.
Given a non-terminal node $a \in \TT_j^0$, 
we have $a = r^{(k)}$ for some $k$.
We then define  the function 
$\gf: \TT_j^0 \to \{1, \dots, j\}$
by setting $\gf(a) = k$ with 
$a = r^{(k)}$.
Namely,  
$\gf(a)$ denotes the generation
where $a$ appears as a parent.

\end{definition}

\medskip

Given a tree $\TT$, we associate each terminal node $a\in\TT^\infty$ with 
the  Fourier transform of the interaction representation $\vv$ (or $\vvd$) and integrate over all possible frequency choices. 
For this purpose, we introduce the index function $\xii$ assigning frequencies to 
all the nodes in $\TT$ in a consistent manner.

\begin{definition}[index function]\label{DEF:index}
\rm 

Let  $J\in\N$.  Given  a tree $\TT\in T(J)$, 
we define an index function $\xii: \TT \to \ft\M_*$ such that

\smallskip

\begin{enumerate}
\item[(i)] $\xi_a = \xi_{a_1} + \xi_{a_2}$ for $a\in \TT^0$, where $a_1$ and $a_2$ denote the children of $a$, 

\smallskip

%

\item[(ii)] 
Let $ \mu_1 =  \Xi_{\KDV}(\xi_r, \xi_{r_1} ,  \xi_{r_2})$, 
 where $r$ is the root node with children $r_1$ and $r_2$.
 Then, we have
\begin{align}
|\mu_1|  > (3K)^4.
\label{TR1}
\end{align}
\end{enumerate}

\smallskip

\noi 
Here, 
 we identified $\xii: \TT \to\ft\M_*$ with $\{\xi_a\}_{a\in\TT}\in (\ft\M_*)^\TT$. We use $\Nf(\TT) \subset (\ft\M_*)^\TT$ to denote the collection of such index functions $\xii$. 

\end{definition}

We note that the condition \eqref{TR1} is consistent with 
the (highly) non-resonant assumption (i.e.~on $A_1(\xi)^c$ defined in  \eqref{A1}).

\medskip

Given an ordered tree 
$\TT_J$ of the $J$th generation with a chronicle $\{ \TT_j\}_{j = 1}^J$ 
and associated index functions $\xii \in \mathfrak{N}(\TT_J)$,
 we use superscripts to 
  denote  ``generations'' of frequencies.
 The first generation $\TT_1$ has frequencies:
\begin{align*}
\big(\xi^{(1)} , \xi_1^{(1)}, \xi_2^{(1)}\big) : = (\xi_r ,  \xi_{r_1}, \xi_{r_2}), 
\end{align*}

\noi
where $r$ denotes the root node
with its  children $r_1$ and $r_2$.
The ordered tree of the second generation $\TT_2$ is obtained from $\TT_1$ by changing
one of its terminal nodes $a = r_k \in \TT^\infty_1$ for some $k \in \{1, 2\}$
 into a non-terminal node (with two children $a_1$ and $a_2$). 
 Then, we define
the second generation of frequencies by setting
\[\big(\xi^{(2)}, \xi^{(2)}_1, \xi^{(2)}_2\big) :=(\xi_a, \xi_{a_1}, \xi_{a_2}).\]

\noi
Note that  we have $\xi^{(2)} = \xi^{(1)}_k = \xi_{r_k}$.
This corresponds to introducing a new set of frequencies
after the first step of normal form reductions.

After  $j - 1$ steps, the ordered tree $\TT_j$ 
of the $j$th generation is obtained from $\TT_{j-1}$ by
changing one of its terminal nodes $a  \in \TT^\infty_{j-1}$
into a non-terminal node (with two children $a_1$ and $a_2$).
Then, we define
the $j$th generation of frequencies by setting
\begin{align}
\big(\xi^{(j)}, \xi^{(j)}_1, \xi^{(j)}_2\big) :=(\xi_a, \xi_{a_1}, \xi_{a_2}).
\label{fre1}
\end{align}

\noi
Note that these frequencies
satisfy  Definition \ref{DEF:index}\,(i):
$\xi_a^{(j)} = \xi_{a_1}^{(j)} + \xi_{a_2}^{(j)}$.

\medskip

Lastly, we introduce the resonance function $\mu_j$ in the $j$th generation:
\begin{align}
\begin{split}
\mu_j = \mu_j\big(\xi^{(j)}, \xi^{(j)}_1, \xi^{(j)}_2\big) 
: \!& = \Xi_\KDV\big(\xi^{(j)}, \xi^{(j)}_1, \xi^{(j)}_2\big)\\
& = -3\xi^{(j)} \xi^{(j)}_1 \xi^{(j)}_2, 
\end{split}
\label{mu1}
\end{align}

\noi
where $\Xi_\KDV$ is as in \eqref{Xi1}. 
Then,  by setting
\begin{align}
\wt\mu_j = \sum_{k=1}^j \mu_k, 
\label{mu2}
\end{align}

\noi
we  define the nearly resonant set $A_j$ for the $j$th generation:
\begin{align}
\label{A2}
A_j 
= \big\{ 
\xii \in \Nf(\TT_j): | \wt\mu_j| \le (j+2)^4  | \wt\mu_{j-1}|
\big\}.
\end{align}

\noi
Its complement $A^c_j$
is defined by 
\begin{align}
A^c_j
=
\big\{ 
\xii \in \Nf(\TT_j): | \wt\mu_j| > (j+2)^4 | \wt\mu_{j-1}|
\big\}
\label{A3}
\end{align}

\noi
corresponds to the (highly) non-resonant set  for the $j$th generation.
Compare \eqref{A2} and~\eqref{A3} with \eqref{A1}.
From \eqref{TR1}, \eqref{mu2}, \eqref{A2}, and \eqref{A3}
with \eqref{mu1}, we have 
\begin{align}
\label{A4}
\begin{split}
&  \xii \in \bigcap_{k=1}^{j} A_k^c  \\
&   \LRA  \ \ 
 |\wt\mu_k|
 \sim |\mu_k|
 \sim |\xi^{(k)}\xi^{(k)}_1 \xi^{(k)}_2|
  > \big((k+2)! K\big)^4
 , \ k=1, \ldots, j, 
\end{split}
\end{align}

\noi
and
\begin{align}
\begin{split}
& \xii \in A_j \cap \bigg(\bigcap_{k=1}^{j-1} A_k^c \bigg)\\
&  \LRA  \ \ 
 |\wt\mu_k| \sim |\mu_k|  
  \sim |\xi^{(k)}\xi^{(k)}_1 \xi^{(k)}_2| > \big((k+2)! K\big)^4, \ 
 k=1, \ldots, j-1, 
\\
& \hphantom{\LRA} \ \ 
\ \
\text{and}
\ \ 
|\mu_{j-1}| \ges j^{-4} |\mu_j|.
\end{split}
\label{A4a}
\end{align}

\begin{remark} \label{REM:terminal}
\rm

Note that $\xii = \{\xi_a\}_{a\in\TT}$ is completely determined
once we specify the values $\xi_a$ for $a \in \TT^\infty$.
Moreover, 
given an ordered tree $\TT_j$ of the $j$th generation, we have 
\begin{align*}
\xi_ r = \sum_{a \in \TT_j^\infty}  \xi_{a}.
\end{align*}

\noi
Recalling 
$|\TT^\infty_j| = j + 1$, 
this implies that the symbol 
\[\intt_{\substack{\xii\in \mathfrak{N}(\TT_{j})\\\xi_r=\xi}}\]

\noi
means that we are performing $j$ integrations
in  $\xi_a$, $a \in \TT_j^\infty \setminus \{b\}$
(satisfying $\xi_r = \xi$)
by first fixing some $b\in \TT_j^\infty$.

\end{remark}

Let us now go back to the normal form procedure.
As mentioned above, we split $\NN^{(2)}$ in~\eqref{NF4}
into the nearly
resonant part
 $\NN^{(2)}_1$ (as the restriction
 onto $A_2$ defined in \eqref{A2}) 
and  the non-resonant part
 $\NN^{(2)}_2$
  (as the restriction
 onto $A_2^c$ defined in \eqref{A3}) 
  and apply the second step of normal form reductions 
 to 
$\NN^{(2)}_2$.
As for $\NN^{(2)}_2(\vv)$, 
the  procedure is precisely the same as those in \cite{GKO, OW2}.
As seen in \eqref{NF4}, 
however, 
two terms appear when we apply a normal form reduction to 
 $\NN^{(2)}_2(\vvd)$
 and replace $\partial_t \ftvvd$, using \eqref{dt}, 
which is a novel point of our perturbative argument.

After $j-1$ steps, we have the following non-resonant part
\begin{align*}
\NN^{(j)}_2 (\uu) (\xi) 
 =
(-1)^{j-1}
\sum_{\TT_j \in \Tfr(j)} \intt_{\substack{\xii \in \Nf(\TT_j) \\ \xi_r = \xi}} \ind_{\bigcap_{k=1}^j A_k^c} \frac{ i e^{it\wt\mu_j} \prod_{k=1}^j  \xi^{(k)} }{\prod_{k=1}^{j-1}  \wt\mu_k} 
\prod_{a\in\TT^\infty_j} \ft\uu(\xi_a).
\end{align*}

\noi
Applying the $j$th normal form reduction
with the product rule, we obtain
\begin{align}
\begin{split}
&  \NN^{(j)}_2 (\uu) (\xi) \\
&
\quad = 
\dt 
\bigg(
(-1)^{j-1}
\sum_{\TT_j \in \Tfr(j)} \intt_{\substack{\xii \in \Nf(\TT_j) \\ \xi_r = \xi}} \ind_{\bigcap_{k=1}^j A_k^c} 
\frac{e^{it\wt\mu_j} \prod_{k=1}^j \xi^{(k)} }{\prod_{k=1}^{j} \wt\mu_k} 
\prod_{a\in\TT^\infty_j} \ft\uu(\xi_a) \bigg)
\\
&
\quad \quad 
+
(-1)^j
\sum_{\TT_j \in \Tfr(j)} \sum_{b\in \TT^\infty_j} \intt_{\substack{\xii \in \Nf(\TT_j) \\ \xi_r = \xi}} \ind_{\bigcap_{k=1}^j A_k^c} \frac{ e^{it\wt\mu_j} \prod_{k=1}^j  \xi^{(k)} }{\prod_{k=1}^{j}  \wt\mu_k} \\
& \hphantom{XXXXXXXXXXXXXX}
\times \big( \dt \ft\uu(\xi_b) \big)
\prod_{a\in\TT^\infty_j \setminus\{b\}} \ft\uu(\xi_a).
\end{split}
\label{NF5}
\end{align}

\noi
We first consider the case $\uu = \vv$.
Then, from \eqref{NF5} and \eqref{dt} with \eqref{NN1} and \eqref{mu2}, we have 
\begin{align}
& \NN^{(j)}_2 (\vv) (\xi) \notag \\
&\quad 
= 
\dt 
\bigg(
(-1)^{j-1}
\sum_{\TT_j \in \Tfr(j)} \intt_{\substack{\xii \in \Nf(\TT_j) \\ \xi_r = \xi}} \ind_{\bigcap_{k=1}^j A_k^c} \frac{ e^{it\wt\mu_j} \prod_{k=1}^j  \xi^{(k)} }{\prod_{k=1}^{j}  \wt\mu_k} 
\prod_{a\in\TT^\infty_j} \ft\vv(\xi_a) 
\bigg)
\notag
\\
&\quad \quad 
+
(-1)^j
\sum_{\TT_{j+1}\in \Tfr(j+1)}  \intt_{\substack{\xii \in \Nf(\TT_{j+1}) \\ \xi_r = \xi}} \ind_{\bigcap_{k=1}^j A_k^c} \frac{ i e^{it\wt\mu_{j+1}} \prod_{k=1}^{j+1}  \xi^{(k)} }{\prod_{k=1}^{j}  \wt\mu_k} 
\prod_{a\in\TT^\infty_{j+1}} \ft\vv(\xi_a) 
\notag
\\
&\quad 
=: \dt \NN_0^{(j)} (\vv) ( \xi ) + \NN^{(j+1)} (\vv) (\xi).
\label{NF6}
\end{align}

\noi
Next, we consider the case $\uu = \vvd$, 
where we use the second equation in \eqref{dt}.
This leads to 
\begin{align}
\label{NF7}
\NN^{(j)}_2 (\vvd) (\xi) 
&
= \dt \NN_0^{(j)}(\vvd)(\xi) + \NN^{(j+1)}(\vvd) (\xi) + \EE^{(j+1)}_\dl (\vvd)(\xi).
\end{align}

\noi
Here,  the last term 
denotes the contribution
from $\EE^{(1)}_\dl(\vvd)$
(see \eqref{ER1})
in replacing 
 $\dt\ftvvd$, and  is given by
\begin{align}
\begin{split}
\EE^{(j+1)}_\dl (\vvd)(\xi)
&
=
(-1)^j
\sum_{\TT_{j+1}\in \Tfr(j+1)}  \intt_{\substack{\xii \in \Nf(\TT_{j+1}) \\ \xi_r = \xi}} \ind_{\bigcap_{k=1}^j A_k^c} \\
& 
\quad \times \varphi_\dl \big(t, \xi^{(j+1)}, \xi^{(j+1)}_1,  \xi^{(j+1)}_2\big) 
\frac{i e^{it\wt\mu_{j}} \prod_{k=1}^{j+1} \xi^{(k)} }{\prod_{k=1}^{j}  \wt\mu_k} 
\prod_{a\in\TT^\infty_{j+1}} \ftvvd(\xi_a), 
\end{split}
\label{NF8}
\end{align}

\noi
where $\varphi_\dl$ is as in \eqref{phi1} and $(\xi^{(j+1)}, \xi^{(j+1)}_1, \xi^{(j+1)}_2)$ denotes the frequencies of the last generation. 
We note that the only difference between $\NN^{(j+1)}$ and $\EE^{(j+1)}_\dl$ in \eqref{NF6} and \eqref{NF8}, respectively, appears in the multiplier, where,  in the latter, 
\begin{align*}
 \varphi_\dl \big(t, \xi^{(j+1)}, \xi^{(j+1)}_1,  \xi^{(j+1)}_2\big)
 e^{it \wt\mu_{j}}
\quad 
\text{replaces} 
\quad 
e^{it \wt\mu_{j+1}} 
= e^{it \wt\mu_{j}} e^{it \mu_{j+1}}
\ \text{in }  \NN^{(j+1)}.
\end{align*}

In order to proceed to the $(j+1)$th normal form reduction, 
 we now split $\NN^{(j+1)}$, 
appearing in~\eqref{NF6} and \eqref{NF7},  into
the nearly resonant part $\NN^{(j+1)}_1$ and the (highly) non-resonant part~$\NN^{(j+1)}_2$:
\begin{align*}
\NN^{(j+1)} = \NN^{(j+1)}_1 + \NN^{(j+1)}_2, 
\end{align*}

\noi
where the first term is 
the restriction of $\NN^{(j+1)}$ onto
 $A_{j+1}$  in \eqref{A2}, 
 while  $\NN^{(j+1)}_{2}$ is the restriction of $\NN^{(j+1)}$
 onto $A_{j+1}^c$ in \eqref{A3}. 
 We then apply a normal form reduction to $\NN^{(j+1)}_2$
 and iterate this process indefinitely.

\subsection{Proof of Proposition \ref{PROP:low}}
\label{SUBSEC:NF3}

In this subsection, we state crucial multilinear estimates
and convergence results
on the multilinear operators
appearing in 
our perturbative normal form procedure (Proposition \ref{PROP:NN})
whose proof is presented in 
Sections \ref{SEC:per}
and \ref{SEC:Euc}.
We then present a proof of Proposition \ref{PROP:low}, 
which is the second fundamental  ingredient
in proving Theorem~\ref{THM:1}.

After $(J-1)$-normal form reductions,
 the discussion in the previous two subsections 
 leads to  the following formulations:
\begin{align}
\label{NF10}
\begin{split}
\ft\vv(t, \xi) & =  \ft v_0(\xi)
+ \sum_{j=1}^{J-1} \NN_0^{(j)} ( \vv )(t', \xi) \bigg|_{0}^{t} \\
& \quad +
\int_0^t 
\sum_{j=1}^{J}
\NN_1^{(j)} (\vv) (t', \xi)
dt' 
+ 
\int_0^t 
\NN^{(J)}_2 ( \vv) (t', \xi) 
dt'
\end{split}
\end{align}

\noi
for the interaction representation $\vv$ of the solution to KdV \eqref{KDV}, 
and 
\begin{align}
\begin{split}
\ftvvd(t, \xi) 
& =  \ft v_{0, \dl}(\xi)
+ 
\sum_{j=1}^{J-1} \NN_0^{(j)} ( \vvd )(t', \xi) \bigg|_{0}^{t} \\
& \quad +
\int_0^t 
\sum_{j=1}^{J}
\Big(
\NN_1^{(j)} (\vvd) (t', \xi)
+
\EE_\dl^{(j)} (\vvd) (t', \xi)
\Big)
dt' 
\\
&
\quad 
+ 
\int_0^t 
\NN^{(J)}_2 ( \vvd) (t', \xi) 
dt' 
\end{split}
\label{NF11}
\end{align}

\noi
for the interaction representation $\vvd$ of the solution to the scaled ILW \eqref{sILW1},
where 
 $\NN_0^{(j)}$, $\NN_1^{(j)}$, $\NN_2^{(j)}$, and $\EE^{(j)}_\dl$ are
the $(j+1)$-linear operators given by 
\begin{align}
\label{NF12}
\begin{split}
\NN^{(j)}_0 (\uu) (t, \xi) 
&
=
(-1)^{j-1} \sum_{\TT_j \in \Tfr(j)}
\intt_{\substack{\xii \in \Nf(\TT_j) \\ \xi_{r} = \xi} } \ind_{\bigcap_{k=1}^j A_k^c} \frac{ e^{it \wt\mu_j}  \prod_{k=1}^j  \xi^{(k)}  }{\prod_{k=1}^{j}  \wt\mu_k} 
\prod_{a \in \TT^\infty_j} \ft\uu(t, \xi_a)
,
\\
\NN_1^{(j)} (\uu) (t, \xi)
&
=
(-1)^{j-1} \sum_{\TT_j \in \Tfr(j)}
\intt_{\substack{\xii \in \Nf(\TT_j) \\ \xi_{r} = \xi} } \ind_{A_j \cap (\bigcap_{k=1}^{j-1} A_k^c)} \\
& 
\hphantom{XXXXXXXXX}
\times \frac{ i e^{it \wt\mu_j}  \prod_{k=1}^j  \xi^{(k)}  }{\prod_{k=1}^{j-1}  \wt\mu_k} 
\prod_{a \in \TT^\infty_j} \ft\uu(t, \xi_a)
, 
\\
\NN_2^{(j)} (\uu) (t, \xi)
&
 =
(-1)^{j-1} \sum_{\TT_j \in \Tfr(j)}
\intt_{\substack{\xii \in \Nf(\TT_j) \\ \xi_{r} = \xi} } \ind_{\bigcap_{k=1}^{j} A_k^c} \frac{i  e^{it \wt\mu_j}  \prod_{k=1}^j   \xi^{(k)}  }{\prod_{k=1}^{j-1}  \wt\mu_k} 
\prod_{a \in \TT^\infty_j} \ft\uu(t, \xi_a),
\end{split}
\end{align}

\noi
and
\begin{align}
\begin{split}
\EE_\dl^{(j)} (\uu) (t, \xi)
& =(-1)^{j-1} \sum_{\TT_j \in \Tfr(j)}
\intt_{\substack{\xii \in \Nf(\TT_j) \\ \xi_{r} = \xi} } \ind_{\bigcap_{k=1}^{j-1} A_k^c} 
\varphi_\dl\big(t, \xi^{(j)}, \xi^{(j)}_1, \xi^{(j)}_2\big) \\
& \hphantom{XXXXXXXXX}\times
\frac{i  e^{it \wt\mu_{j-1}}  \prod_{k=1}^j  \xi^{(k)}  }{\prod_{k=1}^{j-1}  \wt\mu_k} 
\prod_{a \in \TT^\infty_j} \ft\uu(t, \xi_a).
\end{split}
\label{NF13}
\end{align}

\noi
Here, 
 $\varphi_\dl$ is as in \eqref{phi1}. 
We also define 
$\wt \NN_2^{(j)} (\uu)$ by setting
\begin{align}
\wt \NN_2^{(j)} (\uu) (t, \xi)
= \big(\ind_{|\xi|\ge \frac{1}{2}}  \xi^{-1} + \ind_{|\xi|<  \frac{1}{2}} \big)\cdot  \NN_2^{(j)} (\uu) (t, \xi).
\label{NF13a}
\end{align}

We now state key multilinear estimates
and convergence results
on these multilinear operators.

\begin{proposition}\label{PROP:NN}
Let $\M = \T$ or $\R$ and   $K\ge1$.
Let  
$0 <  \ta < \frac13$ when $\M = \T$
and 
$\frac 1{14} <  \ta < \frac12$ when $\M = \R$.

\smallskip

\noi{\rm(i)} 
We  have 
\begin{align}
\begin{split}
\sup_{t \in \R}
& \big\| \NN_0^{(j)} (t) (\uu_1) - \NN_0^{(j)} (t)(\uu_2)  \big\|_{L^2_\xi} 
\\
& \le 
C
K^{-4 j\ta}
\Big( \| \ft\uu_1\|_{L^2_\xi} + \| \ft\uu_2\|_{L^2_\xi}  \Big)^{j} 
 \|\ft\uu_1 - \ft\uu_2\|_{L^2_\xi},
\end{split}
\label{NX2}
\end{align}

\noi
uniformly in  $j \in \N$ and $K \ge1$.
We also have 
\begin{align}
\begin{split}
\sup_{t \in \R}
& \big\|   \NN^{(j)}_1(t)  (\uu_1) - \NN^{(j)}_1(t) (\uu_2) \big\|_{L^2_\xi}
\\
&  \le C 
 K^{6-4j \ta}
\Big( \| \ft\uu_1\|_{L^2_\xi} + \| \ft\uu_2\|_{L^2_\xi} \Big)^j
 \|\ft\uu_1 - \ft\uu_2\|_{L^2_\xi},
\end{split}
\label{NX1}
\end{align}

\noi
uniformly in  $j \in \N$
and $K \ge1$.

\medskip
\noi{\rm(ii)} 
Let $\wt \NN_2^{(j)}$ be as in \eqref{NF13a}.
Then, 
the following convergence\textup{:}
\begin{align}
\lim_{j\to\infty} 
\sup_{t \in \R}
\big\|  \wt \NN_2^{(j)} (t)(\uu)  \big\|_{L^\infty_\xi}  = 0
\label{NX3}
\end{align}

\noi
holds uniformly on bound sets in $L^2(\M)$.

\medskip

\noi{\rm(iii)} 
Suppose that  $\{\uu_\dl\}_{0 < \dl \le 1}$
satisfies the uniform bound\textup{:}
\begin{align}
\sup_{0 < \dl \le 1} \|\uu_\dl\|_{L^2} \le R
\label{NX3a}
\end{align}
for some $R> 0$.
In addition, suppose that, given small $\eps > 0$, 
there exist $N_3 = N_3(\eps, R)  \gg 1 $ 
and small $\dl_3 = \dl_3(\eps, R) > 0$ 
such that 
\begin{align}
\sup_{0 < \dl \le \dl_3} \|\P_{> N} \uu_\dl\|_{L^2} < \eps
\label{NX5}
\end{align}

\noi
for any $N \ge N_3$.
Then, given any $N \in \N$, $T > 0$, and small $\eps > 0$,  
there exist 
small 
$\tau_* = \tau_*(R)> 0$
and 
$\dl_4 = \dl_4(\eps, T,N,  R) > 0$ 
such that 
\begin{align}
\tau  \sum_{j=1}^\infty 
\sup_{|t|\le T }
\| \ind_{|\xi| \le N} \cdot  \EE^{(j)}_\dl (t) (\uu_\dl) \|_{L^2_\xi} < \eps
\label{NX6}
\end{align}

\noi
for any $0 < \tau < \tau_*$ and $0 < \dl < \dl_4$, 
provided that $K = K(R) > 0$ satisfies 
\begin{align}
K^{4\ta} > R.
\label{NX4a}
\end{align}

\end{proposition}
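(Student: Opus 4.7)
The plan is to treat (i) and (ii) by the now-standard tree analysis for infinite iterations of normal form reductions developed in \cite{GKO, OST, KOY}, while (iii)---the main novelty---requires a splitting (in the generation index $j$ and in frequency scale) that couples the assumed equicontinuity \eqref{NX5} with the pointwise convergence \eqref{phi2} of $\varphi_\dl$.

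\smallskip
\noindent\textbf{Proof of (i) and (ii).} On $\bigcap_{k=1}^j A_k^c$, the characterization \eqref{A4} together with \eqref{mu1} gives $|\wt\mu_k|\sim|\xi^{(k)}\xi_1^{(k)}\xi_2^{(k)}|>\bigl((k+2)!\,K\bigr)^4$. Cancelling $\prod_{k=1}^j|\xi^{(k)}|$ in the numerator of $\NN_0^{(j)}$ against $\prod_{k=1}^j|\wt\mu_k|$ in the denominator leaves the derivative-smoothing weight $\prod_{k=1}^j 1/|\xi_1^{(k)}\xi_2^{(k)}|$ plus a reserve gain of $((k+2)!\,K)^{-4}$ per generation. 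Splitting each reserve as $K^{-4\ta}\cdot K^{-4(1-\ta)}((k+2)!)^{-4}$, the first factor produces the announced $K^{-4j\ta}$ in \eqref{NX2}, the second absorbs the per-generation loss in a bilinear $L^2_\xi$ estimate (this is where the ranges $0<\ta<1/3$ on $\T$ and $1/14<\ta<1/2$ on $\R$ enter, cf.\ \cite{GKO, KOY}), and $\prod_k ((k+2)!)^{-4(1-\ta)}$ overpowers the tree count $|\Tfr(j)|=j!$ from \eqref{TJ0}. The difference estimates follow by telescoping over the $j+1$ copies of $\uu$; the extra $K^6$ in \eqref{NX1} reflects the presence of only $j-1$ denominators and a mild derivative loss on the nearly resonant slab $A_j$ described in \eqref{A4a}. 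For (ii), the same argument in $L^\infty_\xi$---after pulling out the harmless $\ind_{|\xi|\ge 1/2}\xi^{-1}$ prefactor in \eqref{NF13a}---produces the super-geometric decay $\prod_k((k+2)!)^{-4(1-\ta)}$, from which \eqref{NX3} follows.

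\smallskip
\noindent\textbf{Proof of (iii): the main obstacle.} Comparing \eqref{NF13} with \eqref{NF12}, $\EE_\dl^{(j)}$ has the same tree structure as $\NN_2^{(j-1)}$ but with only $j-1$ denominators and one extra bounded factor $|\varphi_\dl|\le 2$; the analysis of (i) applied verbatim, combined with \eqref{NX3a}, yields
\[
A_j := C\,K^{C_0}\,K^{-4(j-1)\ta}\,R^{j+1},\qquad \sup_{t\in\R}\bigl\|\EE_\dl^{(j)}(\uu_\dl)\bigr\|_{L^2_\xi}\le A_j,
\]
uniformly in $0<\dl\le 1$, and \eqref{NX4a}, namely $K^{4\ta}>R$, makes $S(K,R):=\sum_{j\ge 1}A_j<\infty$; set $\tau_*(R):=1/S(K,R)$. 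By dominated convergence in the counting measure in $j$, dominated by $\{A_j\}\in\ell^1$, the claim \eqref{NX6} reduces to showing, for each fixed $j\ge 1$, that
\[
\bigl\|\ind_{|\xi|\le N}\cdot \EE_\dl^{(j)}(t)(\uu_\dl)\bigr\|_{L^2_\xi}\longrightarrow 0 \qquad\text{as } \dl\to 0, \ \text{uniformly in } |t|\le T.
\]
This pointwise-in-$j$ convergence is the hard part, and it is here that the two hypotheses of (iii) are used in tandem. For each ordered tree $\TT_j$ we decompose its integration domain at a large frequency scale $M=M(\eps,T,N,R,j)\gg N$. On the \emph{inner-high} piece where some terminal $\xi_a$, $a\in\TT_j^\infty$, exceeds $M$, the factor $\ft\uu_\dl(\xi_a)$ is supported in $|\xi_a|>M$; choosing $M\ge N_3(\eps',R)$ in \eqref{NX5} makes its $L^2$-norm smaller than any prescribed $\eps'$ for $0<\dl<\dl_3$, and the bilinear bound from (i) delivers an arbitrarily small contribution. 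On the \emph{inner-low} complement, the last-generation triple $(\xi^{(j)},\xi_1^{(j)},\xi_2^{(j)})$ is confined to the compact box $\{|\cdot|\le CM\}$; the limit \eqref{phi2}, uniform on compacta in $(\xi,\xi_1,\xi_2)$ for $|t|\le T$, combined with dominated convergence against the $L^2$-majorant supplied by the bilinear estimate and $|\varphi_\dl|\le 2$, drives this contribution to $0$ as $\dl\to 0$. Summing over the finitely many trees $\TT_j\in\Tfr(j)$ completes the proof, since then $\sum_j\|\ind_{|\xi|\le N}\cdot\EE_\dl^{(j)}\|_{L^2_\xi}<\eps/\tau_*$ for $\dl<\dl_4(\eps,T,N,R)$, and $\tau<\tau_*$ gives $\tau\sum_j<\eps$.
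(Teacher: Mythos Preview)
For Parts (i) and (ii) your sketch follows the paper's tree analysis and is correct in outline, though the argument for $\NN_1^{(j)}$ (and for Part~(ii) on $\R$) is considerably more delicate than you indicate: it requires a careful path analysis linking the $(j-1)$st and $j$th generations (see \eqref{PQ9} and Steps~4--5 in Section~\ref{SEC:Euc}), not merely the observation that there is one fewer denominator.

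For Part (iii) there is a concrete error: the bound $\sup_t\|\EE_\dl^{(j)}(\uu_\dl)\|_{L^2_\xi}\le A_j$ \emph{without} the cutoff $\ind_{|\xi|\le N}$ is false. Observe that $\EE_\dl^{(j)}$ carries the multiplier of $\NN_2^{(j)}$, not of $\NN_2^{(j-1)}$ as you write: $j$ factors $\xi^{(k)}$ in the numerator against only $j-1$ factors $\wt\mu_k$ in the denominator, and no restriction to $A_j$. Already for $j=1$ this is $i\xi$ times a bounded bilinear convolution, which lands in $L^\infty_\xi$ but not in $L^2_\xi$. The correct uniform-in-$\dl$ majorant is the $L^\infty_\xi$ estimate from Part~(ii) (this is precisely why the paper passes from $\NN_2^{(j)}$ to $\wt\NN_2^{(j)}$ by dividing out $\xi$); after inserting the cutoff one obtains $\|\ind_{|\xi|\le N}\EE_\dl^{(j)}\|_{L^2_\xi}\le N^{3/2}A_j$ with $\sum_j A_j<\infty$. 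Your dominated-convergence strategy then goes through, but your definition $\tau_*(R)=1/S(K,R)$ is ill-posed as written; in fact once $\sum_j\to 0$ as $\dl\to 0$, any fixed $\tau_*$ suffices, so the $N$-dependence of the dominator is harmless.

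The paper takes a different route for (iii): rather than fixing $j$ and choosing $M=M(j)$, it introduces a single $\dl$-dependent threshold $\dl^{-2/5+\eps_0}$ uniform in $j$, applies the quantitative bound $|\varphi_\dl|\les T\dl^{5\eps_0}$ (from \eqref{L3}) below the threshold and equicontinuity above it (see \eqref{PE7c}), and sums over all $j$ directly via the factorial decay. This yields an explicit rate in $\dl$ and makes the role of $\tau_*$ transparent: it absorbs the constant in \eqref{PE11}, where the $N^{3/2}$ loss from the cutoff is exactly cancelled by the $N^{-3/2}$ gain built into \eqref{PE7c}. Your dominated-convergence approach is softer but, once the cutoff issue above is repaired, equally valid.
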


In Section~\ref{SEC:per}, 
we present a proof of 
Proposition~\ref{PROP:NN} in the periodic setting.
In Section~\ref{SEC:Euc}, 
we prove
Proposition~\ref{PROP:NN} in the Euclidean setting.
As it is clear from the proof, the multilinear 
terms in \eqref{NF12} and \eqref{NF13}
are indeed absolutely integrable
(and summable in $j \in \N$).

\medskip

By taking the difference of \eqref{NF10} and \eqref{NF11}, 
multiplying by the frequency cutoff 
$\ind_{|\xi| \le N}$, 
and taking a limit as 
 $J\to\infty$, 
 it follows from Proposition \ref{PROP:NN}\,(ii)
 that 
 \begin{align}
\label{NF14}
\begin{split}
\ind_{|\xi| \le N}(\ft\vv- \ftvvd)(t, \xi) 
&
= 
\ind_{|\xi| \le N}(\ft v_{0} - \ft v_{0, \dl})(\xi)\\
& \quad + 
\sum_{j=1}^{\infty} \ind_{|\xi| \le N}\Big( \NN_0^{(j)} ( \vv )(t', \xi) - \NN_0^{(j)} ( \vvd )(t', \xi)  \Big)
\bigg|_0^t 
\\
&
\quad
+
\int_0^t 
\sum_{j=1}^{\infty}
\ind_{|\xi| \le N}\Big( \NN_1^{(j)} (\vv) (t', \xi) - \NN_1^{(j)} (\vvd) (t', \xi)
\Big)
dt' 
\\
&
\quad 
+
\int_0^t 
\sum_{j=1}^{\infty}
\ind_{|\xi| \le N}\cdot \EE_\dl^{(j)} (\vvd) (t', \xi)
dt'  
\end{split}
\end{align}

\noi
for each $\xi \in \ft \M_*$
and also in the $L^2_\xi$-sense thanks to the frequency cutoff.
By assuming Proposition \ref{PROP:NN}, 
we now present a proof of 
 Proposition \ref{PROP:low}.

\begin{proof}[Proof of Proposition \ref{PROP:low}]
Fix small $\eps > 0$ and $T > 0$.
In view of the convergence of $v_{0, \dl}$
to $v_0$ in $L^2(\M)$, 
there exists $R \ge1 $ such that 
\begin{align}
\begin{split}
\| v_{0}\|_{L^2}
+  \sup_{0 <  \dl \le \dl_0 }
\| v_{0, \dl}\|_{L^2}
& \le R, \\
\sup_{0 <  \dl \le \dl_0 }
\| v_0 - v_{0, \dl}\|_{L^2} & < \frac {\eps'} 8
\end{split}
\label{LL1}
\end{align}

\noi
for some small $\dl_0 > 0$, 
where $\eps' = \eps'(\eps,T, R) > 0$ is a small number to be chosen later.
Then, from the conservation of the $L^2$-norm
under KdV~\eqref{KDV} and the scaled ILW~\eqref{sILW1}, 
we have 
\begin{align}
\| \vv\|_{L^\infty_t L^2_x}
+  \sup_{0 <  \dl \le\dl_0 }
\| \vvd \|_{L^\infty_t L^2_x}
\le R.
\label{LL2}
\end{align}

\noi
Also, from 
Proposition \ref{PROP:EC}, 
 there exist
$N_1 = N_1(\eps', R)\gg1 $ and small $\dl_1 = \dl_1(\eps', R) > 0$ such that 
\begin{align}
 \| \P_{> N}\vv \|_{L^\infty_t L^2_x}
+ \sup_{0 <  \dl \le  \dl_1}
 \| \P_{> N}\vvd \|_{L^\infty_t L^2_x} < \frac {\eps'}4
\label{LL3}
\end{align}

\noi
for any $N \ge N_1$.
In the following, we assume that $N \ge N_1(\eps', R)$.

Write $[0, T] = \bigcup_{\l = 0}^{[T/\tau]} I_\l$, 
where $I_\l = [\l\tau, (\l+1)\tau] \cap [0, T]$.
Here, $[x]$ denotes the integer part of $x \in \R$.
Fix 
small $\tau =  \tau(R) > 0$ (to be chosen later).
Then, 
from \eqref{NF14} (starting at time $\l \tau$), 
we have 
\begin{align}
\label{LL4}
\begin{split}
\| \P_{\le N} (\vv - \vvd) \|_{L^\infty_{I_\l} L^2_x}
&
\le  \| \P_{\le N} (\vv - \vvd) (\l \tau) \|_{L^2_x}\\
& + 
2\sum_{j=1}^\infty \big\| \NN_0^{(j)} (\vv) - \NN^{(j)}_0 (\vvd)  \big\|_{L^\infty_{I_\l} L^2_\xi}
\\
&
\quad 
+
\tau \sum_{j=1}^\infty  \big\|  \NN_1^{(j)} (\vv) - \NN^{(j)}_1 (\vvd)   \big\|_{L^\infty_{I_\l} L^2_\xi}
\\
&
\quad 
+
\tau \sum_{j=1}^\infty \big\| \ind_{|\xi| \le N} \cdot  \EE^{(j)}_\dl (\vvd)  \big\|_{L^\infty_{I_\l} L^2_\xi}.
\end{split}
\end{align}

Fix $\frac14 <  \ta < \frac13$
and choose $K = K(R) \ge 1$ such that 
\begin{align}
K^{4\ta} > R.
\label{LL5}
\end{align}

\noi
Then, from 
 Proposition~\ref{PROP:NN}\,(i)
 with \eqref{LL2},  \eqref{LL5}, and \eqref{LL3}, we have 
\begin{align}
\begin{split}
&
2 \sum_{j=1}^\infty \big\| 
 \NN_0^{(j)} (\vv) - \NN^{(j)}_0 (\vvd)  \big\|_{L^\infty_{I_\l} L^2_\xi}
\\
& \quad 
\le 
C 
\bigg(\sum_{j=1}^\infty (K^{-4\ta} R)^j \bigg)
\| \ft\vv - \ftvvd\|_{L^\infty_{I_\l} L^2_\xi}
\\
& \quad 
\le 
\frac 14
\Big( \| \P_{\le N} (\vv - \vvd)  \|_{L^\infty_{I_\l}L^2_x} + \| \P_{>N} (\vv- \vvd)\|_{L^\infty_{I_\l} L^2_x} \Big)
\\
& \quad 
\le  \frac{1}{4} \| \P_{\le N} (\vv - \vvd)  \|_{L^\infty_{I_\l} L^2_x} + \frac {\eps'} 8
\end{split}
\label{LL6}
\end{align}

\noi
by taking $K = K (R)\ge 1 $ sufficiently large.
Similarly, 
with $K = K(R)\ge 1$ satisfying \eqref{LL5}, 
we have 
\begin{align}
\label{LL7}
\begin{split}
&
\tau  \sum_{j=1}^\infty  \big\|    \NN_1^{(j)} (\vv) - \NN^{(j)}_1 (\vvd)  \big\|_{L^\infty_{I_\l}  L^2_\xi}
\\
& \quad 
\le 
C \tau  K^6
\Big( \| \P_{\le N} (\vv - \vvd)  \|_{L^\infty_{I_j} L^2_x} + \| \P_{>N} (\vv- \vvd)\|_{L^\infty_{I_\l} L^2_x} \Big)
\\
& \quad 
\le  \frac{1}{4} \| \P_{\le N} (\vv - \vvd)  \|_{L^\infty_{I_\l} L^2_x} + \frac {\eps'} 8
\end{split}
\end{align}

\noi
by taking 
$\tau = \tau (K) > 0$ sufficiently small.
Finally, 
from
Proposition \ref{PROP:EC}
(verifying the hypothesis \eqref{NX5}), 
it follows from Proposition \ref{PROP:NN}\,(iii) that 
there exist small 
$\tau_* = \tau_*(K) = \tau_*(R)> 0$
and 
$\dl_4 = \dl_4(\eps', T, N, K) = \dl_4(\eps', T, N, R)> 0$ 
such that 
\begin{align}
\tau \sum_{j=1}^\infty \big\| \ind_{|\xi| \le N} \cdot  \EE^{(j)}_\dl (\vvd)  \big\|_{L^\infty_T L^2_\xi}
< \frac {\eps'} 8
\label{LL8}
\end{align}

\noi
for any $0 < \tau < \tau_*$ and $0 < \dl < \dl_4$, 
under the condition \eqref{LL5}.

Hence, it follows from \eqref{LL4}, \eqref{LL6}, \eqref{LL7}, 
and
\eqref{LL8}
that 
there exists  small $\dl_5 = \dl_5(\eps', T, N, R)> 0$ 
such that 
\begin{align}
\begin{split}
\| \P_{\le N} (\vv - \vvd) \|_{L^\infty_{I_\l}  L^2_x} 
&
< 
 2 \| \P_{\le N} (\vv - \vvd) (\l\tau) \|_{L^2_x}
+
\frac {3} 4 \eps'
\end{split}
\label{LL8a}
\end{align}

\noi
for any  $0 < \dl \le \dl_5$, 
provided that 
$N = N(\eps', R) \gg1$, 
$K = K(R) \gg1$, 
and $0<\tau = \tau(R) \ll 1$.
When $\l = 0$, 
from \eqref{LL8a} with \eqref{LL1}, 
we have 
\begin{align*}
\| \P_{\le N} (\vv - \vvd) \|_{L^\infty_{I_0}  L^2_x} 
< \eps'.
\end{align*}

\noi
By iteratively applying \eqref{LL8a}, 
we obtain
\begin{align*}
\| \P_{\le N} (\vv - \vvd) \|_{L^\infty_{I_\l}  L^2_x} 
\le 
\bigg(\sum_{k = 0}^{\l} 2^k\bigg)\eps'
< 2^{\l+1}\eps'
\end{align*}

\noi
for $\l= 0, 1, \dots, \big[\frac T\tau \big]$.
Therefore, by choosing small $\eps' = \eps'(\eps, T, R)> 0$ 
such that $2^{\frac T\tau + 1} \eps' \le \eps$, 
we obtain
\begin{align*}
\| \P_{\le N} (\vv - \vvd) \|_{C_T  L^2_x} 
< \eps.
\end{align*}

\noi
This proves \eqref{low00}.
\end{proof}

\section{Proof of Proposition \ref{PROP:NN}
on the circle}
\label{SEC:per}

In this section, we present a proof of Proposition~\ref{PROP:NN} 
on the circle $\M=\T$. 
In the following, 
all the estimates hold uniformly in $t \in \R$
and thus 
we drop the $t$-dependence of the multilinear operators
(except for Part (iii)).


\begin{proof}[Proof of Proposition \ref{PROP:NN}
on the circle]
(i)
We first prove 
\begin{align}
\sup_{t \in \R}
& \big\| \NN_0^{(j)} (t) (\uu)   \big\|_{\l^2_\xi} 
 \le 
C
K^{-4j \ta}
 \| \ft\uu\|_{\l^2_\xi}^{j+1},
\label{PQ1}\\
\sup_{t \in \R}
& \big\|   \NN^{(j)}_1(t)  (\uu)  \big\|_{\l^2_\xi}
  \le C 
 K^{6- 4j \ta}
 \| \ft\uu\|_{\l^2_\xi}^{j+1}, 
 \label{PQ2}
 \end{align}

\noi
uniformly in  $j \in \N$ and $K \ge 1$.
We then briefly discuss
how to obtain the difference estimates~\eqref{NX2} and~\eqref{NX1}.

From \eqref{NF12} with \eqref{A4},  we have
\begin{align}
\|  \NN_0^{(j)} (\uu) \|_{\l^2_\xi}
\les 
\bigg\| 
 \sum_{\TT_j \in \Tfr(j)}
\sum_{\substack{\xii \in \Nf(\TT_j) \\ \xi_{r} = \xi} } \ind_{\bigcap_{k=1}^{j} A_k^c} \frac{  \prod_{k=1}^j | \xi^{(k)}|  }{\prod_{k=1}^{j} |\mu_k|} 
\prod_{a \in \TT^\infty_j} |\ft\uu(\xi_a)|
\bigg\|_{\l^2_\xi}.
\label{PQ3}
\end{align}

\noi
From \eqref{A4}, we have
\begin{align}
\begin{split}
 \frac{  \prod_{k=1}^j | \xi^{(k)}|  }{\prod_{k=1}^{j} |\mu_k|} 
&
\le C^j
\frac{\prod_{a \in \TT^{0}_j} |\xi_a| }
{\prod_{a \in \TT^{0}_j  } |\xi_{a} \xi_{a_1} \xi_{a_2}|} \\
& \le C^j
\bigg(\prod_{k = 1}^j(k+2)!\bigg)^{-4\ta} K^{-4j\ta}
\prod_{a \in \TT^{0}_j } \frac{|\xi_a|^\ta}{ | \xi_{a_1} \xi_{a_2}|^{1-\ta}} \\
& \le C^j 
\bigg(\prod_{k = 1}^j(k+2)!\bigg)^{-4\ta} K^{-4j\ta}
\prod_{a \in \TT^{0}_j } \frac{1}{\min(|\xi_{a_1}| ,  | \xi_{a_2}|)^{2-3\ta}}
\end{split}
\label{PQ4}
\end{align}

\noi
on $\bigcap_{k=1}^{j} A_k^c$.
Then, from \eqref{PQ3}, \eqref{PQ4}, 
 \eqref{TJ0}, 
and Cauchy-Schwarz's inequality,  
 we have 
\begin{align}
&
\|  \NN_0^{(j)} (\uu) \|_{\l^2_\xi}
\notag \\
& \quad 
\le C^j
\frac{j!}{\prod_{k = 1}^j\big((k+2)!\big)^{4\ta}} 
K^{-4j \ta}
\notag \\
& \quad 
 \quad 
\times 
\sup_{\TT_j \in \Tfr(j)}
\bigg\| 
\sum_{\substack{\xii\in \mathfrak{N}(\TT_{j})\\ \xii_r=\xi}}
 \bigg( \prod_{a\in\TT^0_j}
 \frac{1}{\min(|\xi_{a_1}|,  | \xi_{a_2}|)^{2-3\ta}} \bigg) 
 \prod_{a \in \TT_{j}^\infty} |\ft\uu(\xi_a)| 
 \bigg\|_{\l^2_\xi} 
\notag \\
& \quad 
\le C^j
\frac{j!}{\prod_{k = 1}^j\big((k+2)!\big)^{4\ta}} 
K^{-4j\ta}  \|\ft\uu\|_{\l^2_\xi}^{j+1} 
\label{PQ5}\\
& \quad 
 \quad 
 \times 
\sup_{\TT_j \in \Tfr(j)}
\bigg\| \bigg(
\sum_{\substack{\xii\in \mathfrak{N}(\TT_{j})\\ \xii_r=\xi}}
 \prod_{a\in\TT^0_j}
 \frac{1}{\min(|\xi_{a_1}|,  | \xi_{a_2}|)^{4-6\ta}} 
\bigg)^\frac 12  \bigg\|_{\l^\infty_\xi} 
\notag \\
& \quad 
\le C^j
\frac{j!}{\prod_{k = 1}^j\big((k+2)!\big)^{4\ta}} 
 K^{-4j \ta} \|\ft\uu\|_{\l^2_\xi}^{j+1} 
\bigg( \sum_{\zeta \in \Z_*} \frac{1}{|\zeta|^{4-6\ta}} \bigg)^{\frac{j}2}
\notag \\
& \quad 
\le
C^{j}
\frac{j!}{\prod_{k = 1}^j\big((k+2)!\big)^{4\ta}} 
 K^{-4j \ta} \|\ft\uu\|_{\l^2_\xi}^{j+1} 
\les
 K^{-4j \ta} \|\ft\uu\|_{\l^2_\xi}^{j+1} , 
\notag
\end{align}

\noi
uniformly in $j \in \N $ and $K \ge1 $, 
 provided that $0  <   \ta < \frac12$. 
This proves
\eqref{PQ1}.

\begin{remark}\label{REM:order}\rm

(i)
With the notations in Definition \ref{DEF:tree3}
and~\eqref{fre1}, 
we have  $\{ a \}_{a \in \TT^0_j}   = \{ r^{(j)}\}_{k = 1}^j$
and $\xi_{r^{(k)}} = \xi^{(k)}$, $k = 1, \dots, j$, 
where $r^{(k)}$ denotes the $k$th root node.
In carrying out the summation in \eqref{PQ5}, 
we start from the last generation.
Namely, for fixed 
$\xi^{(k)}$, $k = 1, \dots, j$, 
we sum over $\xi^{(j)}_1$ (or $\xi^{(j)}_2$).
Then, for fixed 
$\xi^{(k)}$, $k = 1, \dots, j-1$, 
we sum over $\xi^{(j-1)}_1$ (or $\xi^{(j-1)}_2$), 
and so on.
We repeat this process until
we sum over
$\xi^{(1)}_1$ (or $\xi^{(1)}_2$)
for fixed 
$\xi^{(1)} = \xi$.
In handling the other multilinear operators, 
we often proceed in a similar manner.

\smallskip

\noi
(ii)
In view of the gain in the product of factorials
in \eqref{PQ5}, 
any exponential (or factorial) loss in $j$ is negligible and we may drop
such dependence on $j$ in the remaining part of the paper.

\end{remark}


Next, we prove \eqref{PQ2}.
From \eqref{NF12} with \eqref{A4a},  we have
\begin{align}
\|  \NN_1^{(j)} (\uu) \|_{\l^2_\xi}
\les 
\bigg\| 
 \sum_{\TT_j \in \Tfr(j)}
\sum_{\substack{\xii \in \Nf(\TT_j) \\ \xi_{r} = \xi} } \ind_{A_j\cap (\bigcap_{k=1}^{j-1} A_k^c)} \frac{  \prod_{k=1}^j | \xi^{(k)}|  }{\prod_{k=1}^{j-1} |\mu_k|} 
\prod_{a \in \TT^\infty_j} |\ft\uu(\xi_a)|
\bigg\|_{\l^2_\xi}.
\label{PQ6}
\end{align}

\noi
We first consider the   case $j = 1$.
From \eqref{A1}, Young's inequality, and Cauchy-Schwarz's inequality, 
we have 
\begin{align*}
\|  \NN_1^{(1)} (\uu) \|_{\l^2_\xi}
\les K^4 \bigg\|
\sum_{\xi = \xi_1 + \xi_2} \prod_{k = 1}^2 \frac{|\ft \uu(\xi_k)|}{|\xi_k|}
\bigg\|_{\l^2_\xi}
\les K^4 \|\ft  \uu \|_{\l^2_\xi}^2, 
\end{align*}

\noi
yielding \eqref{PQ2} in this case.

Next, we consider the case $j = 2$.
Without loss of generality, assume that $ r^{(2)} = r_1$
(= the left child of the root node $r$ of $\TT_2$), 
which implies that $|\xi^{(2)}| = |\xi^{(1)}_1|$. 
From \eqref{PQ6} with \eqref{A4a}, we have 
\begin{align}
\|   \NN_1^{(2)} (\uu) \|_{\l^2_\xi}
& \les
\bigg\|\sum
_{\substack{\xii \in \Nf(\TT_2) \\ \xi_{r} = \xi} }
\frac{|\ft \uu(\xi^{(1)}_2)|}
{|\xi^{(1)}_2|}
 \ind_{A_2\cap  A_1^c} 
\prod_{k = 1}^2 |\ft \uu(\xi^{(2)}_k)| 
 \bigg\|_{\l^2_{\xi}}.
\label{PQ6a}
\end{align}

\noi
In order to avoid a logarithmic divergence, 
we will make use of dyadic decompositions and Schur's test.
By symmetry, we assume that 
$|\xi^{(2)}_1|\ge |\xi^{(2)}_2|$.
First, suppose that $|\xi^{(1)}_2| \ges |\xi^{(1)}_1|$, 
which implies $|\xi^{(1)}| \les |\xi^{(1)}_2|$.
From \eqref{A4a}, 
we have 
\begin{align}
|\xi^{(1)}_1\xi^{(2)}_1\xi^{(2)}_2|
= |\xi^{(2)}\xi^{(2)}_1\xi^{(2)}_2|
\les |\xi^{(1)}\xi^{(1)}_1\xi^{(1)}_2|, 
\label{PQ6b}
\end{align}

\noi
which in turn implies
$|\xi^{(2)}_2| \les |\xi^{(1)}_2|$
in this case.
Next, we consider the case
 $|\xi^{(1)}_2| \ll |\xi^{(1)}_1|\sim |\xi^{(1)}|$.
If $|\xi^{(2)}_1| \sim |\xi^{(2)}_2| \gg |\xi^{(2)}| = |\xi^{(1)}_1|$, 
then 
it follows from \eqref{PQ6b} that 
\begin{align*}
|\xi^{(1)}|^2 \sim 
|\xi^{(1)}_1|^2
\ll |\xi^{(2)}_1\xi^{(2)}_2|
\les |\xi^{(1)}\xi^{(1)}_2| \ll|\xi^{(1)}|^2
\end{align*}

\noi
which is a contradiction.
Thus, we must have
 $|\xi^{(2)}_2|\les |\xi^{(2)}_1| \sim |\xi^{(2)}|$.
Recall that 
$|\xi^{(2)}| = |\xi^{(1)}_1| \sim |\xi^{(1)}|$.
Hence, from \eqref{PQ6b}, we  have
$|\xi^{(2)}_2| \les |\xi^{(1)}_2|$
once again.

We apply dyadic decompositions to the frequencies
$|\xi^{(k)}_2| \sim N^{(k)}_2\ge 1$, $k = 1, 2$, 
such that 
$N^{(2)}_2\les N^{(1)}_2$.
Then, 
from  \eqref{PQ6a}
and Schur's test, we have 
\begin{align}
\begin{split}
\|  \NN_1^{(2)} (\uu) \|_{\l^2_\xi}
& \les 
\sum_{\substack{N^{(1)}_2 \ges N^{(2)}_2 \ge1 \\\text{dyadic}}}
\frac{1}{N^{(1)}_2}
\bigg\| 
\sum_{\substack{\xi^{(1)} = \xi^{(1)}_1+\xi^{(1)}_2\\
\xi^{(1)}_1 = \xi^{(2)}_1+\xi^{(2)}_2}}
|\ft {\P_{N^{(1)}_2}\uu}(\xi^{(1)}_2)|
|\ft {\P_{N^{(2)}_2}\uu}(\xi^{(2)}_2)|\\
& \hphantom{XXXXXXXXX}
\times |\ft \uu(\xi^{(1)} - \xi^{(1)}_2- \xi^{(2)}_2)|
\bigg\|_{\l^2_{\xi^{(1)}}}\\
& \le
\sum_{\substack{N^{(1)}_2 \ges N^{(2)}_2 \ge1 \\\text{dyadic}}}
\frac{(N^{(2)}_2)^\frac 12}{(N^{(1)}_2)^\frac 12}
\|\ft {\P_{N^{(1)}_2}\uu}\|_{\l^2_\xi}
\|\ft {\P_{N^{(2)}_2}\uu}\|_{\l^2_\xi}
\|\ft\uu\|_{\l^2_\xi}\\
&
 \les
\|\ft\uu\|_{\l^2_\xi}^{3},
\end{split}
\label{PQ6c}
\end{align}

\noi
yielding \eqref{PQ2} when $j = 2$.
Here, 
 $\P_N$ denotes the Littlewood-Paley projector
onto frequencies $\{|\xi|\sim N\}$.

\medskip

Fix $j \ge 3$.
Recalling 
Definition \ref{DEF:tree3}, 
let 
$r^{(k)}$  be the non-terminal node in $\pi_k(\TT_j)$
such that 
$\xi_{r^{(k)}} = \xi^{(k)}$.
Note from 
\eqref{A4a} that  we have 
\begin{align}
|\mu_{j-1}| = 
3|\xi^{(j-1)}\xi^{(j-1)}_1\xi^{(j-1)}_2|
\ges 
j^{-4} |\mu_j|
= 3j^{-4}|\xi^{(j)}\xi^{(j)}_1\xi^{(j)}_2|.
\label{PQ8}
\end{align}

\noi
Recall also from 
Definition~\ref{DEF:tree3}
that 
 $\pb(r^{(j)})$ and $\sbb(r^{(j)})$ denote the parent and sibling nodes of $r^{(j)}$. 
Then, from \eqref{A4a} and \eqref{PQ8}, we have, 
on $A_j \cap (\bigcap_{k=1}^{j-1} A_k^c)$, 
\begin{align}
\begin{split}
 \frac{  \prod_{k=1}^j | \xi^{(k)}|  }{\prod_{k=1}^{j-1} |\mu_k|} 
&
\le C^j
\frac{\prod_{a \in \TT^{0}_j} |\xi_a| }
{\prod_{a \in \TT^{0}_j \setminus \{r^{(j)}\} } |\xi_{a} \xi_{a_1} \xi_{a_2}|} 
\\
& 
\le
C^j j^3
\bigg(\prod_{k = 1}^{j-2} (k+2)!\bigg)^{-4\ta}
 K^{-4(j-2)\ta} 
\frac{|\xi_{\pb(r^{(j-1)})}| }{
|\xi_{\pb(r^{(j-1)})}\xi_{r^{(j-1)}}\xi_{\sbb(r^{(j-1)})}| }\\
& \quad \times 
\frac{|\xi_{r^{(j-1)}}||\xi_{r^{(j)}} | }{|\mu_{j-1}|^{\frac 13 - \frac 12 \ta }
|\mu_{j}|^{\frac 23- \frac 12 \ta} }
\prod_{a \in \TT^{0}_j\setminus \{\pb(r^{(j-1}), r^{(j-1)}, r^{(j)}\}} \frac{|\xi_a|^\ta }
{ | \xi_{a_1} \xi_{a_2}|^{1-\ta}} 
\\
& 
\le C^j
j^3 \bigg(\prod_{k = 1}^{j-2} (k+2)!\bigg)^{-4\ta}
 K^{-4(j-2)\ta} 
\frac{1}{|\xi_{\sbb(r^{(j-1)})}| }\\
& \quad \times 
\frac{1}{(\xi^{(j)}_{\max})^{\frac 13-\ta} (\xi^{(j)}_{\min})^{\frac 23- \frac 12 \ta }}
\frac{1}{(\xi^{(j-1)}_{\max})^{\frac 23 -  \ta }(\xi^{(j-1)}_{\min})^{\frac 13- \frac 12 \ta }}\\
& \quad \times 
\prod_{a \in \TT^{0}_j\setminus \{\pb(r^{(j-1)}), r^{(j-1)}, r^{(j)}\}} \frac{1}
{\min( | \xi_{a_1}|, | \xi_{a_2}|)^{2-3\ta}} 
\\
& \le C^j 
j^3 \bigg(\prod_{k = 1}^{j-2} (k+2)!\bigg)^{-4\ta}
 K^{-4(j-2)\ta} 
\prod_{a \in \TT^{0}_j}\frac{1}
{
\min( | \xi_{a_1}|, | \xi_{a_2}|)^{\frac{1}{2}+\eps_0}} 
\end{split}
\label{PQ9}
\end{align}

\noi
for some small $\eps_0> 0$, 
provided that $0 < \ta <  \frac 13$, 
where 
\begin{align*}
\xi^{(k)}_{\max} = \max\big(|\xi^{(k)}|,|\xi^{(k)}_1|, |\xi^{(k)}_2|\big)
\qquad \text{and}  \qquad
\xi^{(k)}_{\min} = \min\big(|\xi^{(k)}|,|\xi^{(k)}_1|, |\xi^{(k)}_2|\big).
\end{align*}

\noi
Then, proceeding as in \eqref{PQ5}
with \eqref{PQ6},  \eqref{PQ9}, and \eqref{TJ0}, we have 
\begin{align*}
&
\|  \NN_1^{(j)} (\uu) \|_{\l^2_\xi}
\\
& \quad 
\le
C^j
j^3\cdot j!\cdot  \bigg(\prod_{k = 1}^{j-2} (k+2)!\bigg)^{-4\ta}
K^{-4(j-2) \ta}\\
& \quad 
 \quad 
\times 
\sup_{\TT_j \in \Tfr(j)}
\bigg\| 
\sum_{\substack{\xii\in \mathfrak{N}(\TT_{j})\\ \xii_r=\xi}}
 \bigg( \prod_{a\in\TT^0_j}
 \frac{1}{\min(|\xi_{a_1}|,  | \xi_{a_2}|)^{\frac 12 + \eps_0}} \bigg) 
 \prod_{a \in \TT_{j}^\infty} |\ft\uu(\xi_a)| 
 \bigg\|_{\l^2_\xi} 
\\
& \quad 
\le  
C^{j}
j^3\cdot j!\cdot  \bigg(\prod_{k = 1}^{j-2} (k+2)!\bigg)^{-4\ta}
 K^{-4(j-2) \ta} \|\ft\uu\|_{\l^2_\xi}^{j+1} \\
& \quad 
\les  
K^{-4(j-2) \ta} \|\ft\uu\|_{\l^2_\xi}^{j+1} ,  
\end{align*}

\noi
uniformly in $j \ge 3$ and $K \ge 1$, 
 provided that $0  <   \ta < \frac13$. 
This proves
\eqref{PQ2}.

Let us briefly discuss
the difference estimate \eqref{NX2}.
A similar consideration holds for~\eqref{NX1}.
For \eqref{NX2},  
by the multilinearity of $\NN_0^{(j)}$, 
we can write 
$\NN_0^{(j)}(\uu_1) - \NN_0^{(j)} (\uu_2) $
as the sum of $O(j)$ differences, 
each of which contains exactly one factor of $\ft \uu_1 - \ft \uu_2$;
see the proof of \cite[Lemma~3.11]{GKO}.
This $O(j)$ loss does not affect the bound \eqref{NX2}
thanks to the fast decay in $j$, appearing in~\eqref{PQ5}.
The same  comment applies to the real line case
presented in the next section.
This concludes the proof of Part (i).

\medskip

\noi
(ii)
From \eqref{NF13a} with \eqref{NF12} and \eqref{A4},  we have
\begin{align}
\|  \wt \NN_2^{(j)} (\uu) \|_{\l^\infty_\xi}
\les 
\bigg\| 
 \sum_{\TT_j \in \Tfr(j)}
\sum_{\substack{\xii \in \Nf(\TT_j) \\ \xi_{r} = \xi} } \ind_{\bigcap_{k=1}^{j-1} A_k^c} \frac{  \prod_{k=2}^j | \xi^{(k)}|  }{\prod_{k=1}^{j-1} |\mu_k|} 
\prod_{a \in \TT^\infty_j} |\ft\uu(\xi_a)|
\bigg\|_{\l^\infty_\xi}.
\label{PE1}
\end{align}

\noi
Here, we replaced $\bigcap_{k=1}^{j} A_k^c$
in \eqref{NF12} by $\bigcap_{k=1}^{j-1} A_k^c$
so that the analysis presented in this part
can be adapted to study $\EE_\dl^{(j)}(\uu)$
in \eqref{NF13}.

Let $j=1$.\footnote{In proving the limit \eqref{NX3}, we do not need to consider the  case
$j = 1, 2$.
We, however, present it here since it is needed to treat the error term $\EE_\dl^{(j)}$
in Part (iii) below.} From Young's inequality, we have
\begin{align}
\|  \wt \NN_2^{(1)} (\uu) \|_{\l^\infty_\xi}
\les  \| \ft\uu\|_{\l^2_\xi}^2.
\label{PE1a}
\end{align}

Next, we consider the case $j = 2$.
Without loss of generality, assume that $ r^{(2)} = r_1$
(=~the left child of the root node $r$ of $\TT_2$), 
which implies that $|\xi^{(2)}| = |\xi^{(1)}_1|$. 
Then, 
from \eqref{PE1} and~\eqref{A4}, we have
\begin{align}
\begin{split}
\|  \wt \NN_2^{(2)} (\uu) \|_{\l^\infty_\xi}
& \les
\bigg\|\sum_{\xi^{(1)} = \xi^{(1)}_1+\xi^{(1)}_2}
\frac{|\ft \uu(\xi^{(1)}_2)|}
{|\xi^{(1)}_2|}
\bigg\|_{\l^\infty_{\xi^{(1)}}}
 \bigg\|\sum_{\xi^{(1)}_1 = \xi^{(2)}_1+\xi^{(2)}_2}
\prod_{k = 1}^2 |\ft \uu(\xi^{(2)}_k)| \bigg\|_{\l^\infty_{\xi^{(1)}_1}}\\
\\
& \les  \| \ft\uu\|_{\l^2_\xi}^3.
\end{split}
\label{PEE1}
\end{align}

Fix $j\ge3$. 
From 
\eqref{A4}, we have 
\begin{align*}
  |\mu_k| = 3 |\xi^{(k)} \xi^{(k)}_1 \xi^{(k)}_2| 
\sim  |\wt\mu_k |
> 
\big((k+2)! K\big)^4, 
\quad k=1, \ldots, j-1, 
\end{align*}

\noi
on the domain of integration $\bigcap_{k=1}^{j-1} A_k^c$.
Recalling 
Definition \ref{DEF:tree3}, 
let 
$r^{(j)}$ be the non-terminal node in $\pi_j(\TT_j)$
such that 
$\xi_{r^{(j)}} = \xi^{(j)}$.
Note that 
$r^{(j)}  \ne r$ since $j \ge 3$.
Recall also from 
Definition~\ref{DEF:tree3}
that 
 $\pb(r^{(j)})$ and $\sbb(r^{(j)})$ denote the parent and sibling nodes of $r^{(j)}$. 
Then, 
 we can rewrite the multiplier in \eqref{PE1} as 
\begin{align}
& \frac{  \prod_{k=2}^j | \xi^{(k)}|  }{\prod_{k=1}^{j-1} |\mu_k|} 
\notag\\
&\quad 
\le C^j
\frac{\prod_{a \in \TT^{0}_j\setminus\{r\}} |\xi_a| }
{\prod_{a \in \TT^{0}_j \setminus \{r^{(j)}\} } |\xi_{a} \xi_{a_1} \xi_{a_2}|} 
\notag\\
&\quad  = 
C^j \frac 1{|\xi_{r} \xi_{r_1} \xi_{r_2}|} 
\frac{|\xi_{\pb(r^{(j)})} \xi_{r^{(j)}}|}{|\xi_{\pb(r^{(j)})} \xi_{r^{(j)}} \xi_{\sbb(r^{(j)})}|}
\prod_{a \in \TT^{0}_j\setminus\{r, r^{(j)}, \pb(r^{(j)})\}}\frac{ |\xi_a| }
{|\xi_{a} \xi_{a_1} \xi_{a_2}|} 
\label{PE3}
\\
&\quad  \le C^j
\bigg(\prod_{k = 1}^{j-2} (k+2)!\bigg)^{-4\ta}
 K^{-4(j-2)\ta}
\frac{1}{| \xi_{\sbb(r^{(j)})}|}
\prod_{a \in \TT^{0}_j\setminus\{r^{(j)}, \pb(r^{(j)})\}}\frac{ |\xi_a|^\ta }
{|\xi_{a_1} \xi_{a_2}|^{1-\ta}} 
\notag\\
&\quad 
\le C^j \bigg(\prod_{k = 1}^{j-2} (k+2)!\bigg)^{-4\ta}K^{-4(j-2)\ta} 
\prod_{a \in \TT^0_j \setminus \{r^{(j)}\}} \frac{1}{\min(|\xi_{a_1}| ,  | \xi_{a_2}|)^{\frac 12 + \eps_0}}
\notag
\end{align}

\noi
for some small $\eps_0> 0$, 
provided that $0 <  \ta <  \frac 12$, 
where, in the last step, we applied the triangle inequality:
$|\xi_a|\les \max(|\xi_{a_1}|, |\xi_{a_2}|)$.
Hence, noting that
any child node has 
a unique parent
and recalling the definition of the projection $\Pi_{j-1}$
from Definition \ref{DEF:tree3}, 
it follows from~\eqref{PE3}, 
\eqref{TJ0}, 
Remark~\ref{REM:terminal}, 
and Cauchy-Schwarz's inequality
that 
\begin{align}
&
\text{RHS of }\eqref{PE1}
\notag \\
& \quad 
\le C^j 
j!\cdot\bigg(\prod_{k = 1}^{j-2} (k+2)!\bigg)^{-4\ta}
K^{-4(j-2)\ta}
\notag \\
& \quad 
 \quad 
\times 
\sup_{\TT_j \in \Tfr(j)}
\Bigg\{
\bigg\| 
\sum_{\substack{\xii\in \mathfrak{N}(\Pi_{j-1}(\TT_{j}))\\ \xii_r=\xi}}
 \bigg( \prod_{a\in\TT^0_j \setminus\{r^{(j)} \}} 
 \frac{1}{\min(|\xi_{a_1}|,  | \xi_{a_2}|)^{\frac 12 + \eps_0}} \bigg) 
 \notag \\
& \hphantom{XXXXXX}
\times 
\bigg( \prod_{a \in \Pi_{j-1}(\TT_{j})^\infty\setminus\{r^{(j)}\}} |\ft\uu(\xi_a)| \bigg)
 \bigg\|_{\l^\infty_\xi} 
 \notag \\
& \hphantom{XXXXXX}
\times 
\sup_{\xi_{r^{(j)}} \in \Z_*}
 \sum_{\xi_{r^{(j)}} = \xi^{(j)}_1 + \xi^{(j)}_2 }
|\ft\uu(\xi_1^{(j)})| |\ft\uu(\xi_2^{(j)})| \Bigg\}
\label{PE4}\\
& \quad 
\le C^j 
j!\cdot
\bigg(\prod_{k = 1}^{j-2} (k+2)!\bigg)^{-4\ta} K^{-4(j-2)\ta}\|\ft\uu\|_{\l^2_\xi}^{j+1} 
\notag \\
& \quad 
 \quad 
 \times 
\sup_{\TT_j \in \Tfr(j)}
\bigg\| \bigg(
\sum_{\substack{\xii\in \mathfrak{N}(\Pi_{j-1}(\TT_{j}))\\ \xii_r=\xi}}
 \prod_{a\in\TT^0_j \setminus\{r^{(j)} \}} 
 \frac{1}{\min(|\xi_{a_1}|,  | \xi_{a_2}|)^{1+2\eps_0}} 
\bigg)^\frac 12  \bigg\|_{\l^\infty_\xi} 
\notag \\
& \quad 
\le C^j
j!\cdot\bigg(\prod_{k = 1}^{j-2} (k+2)!\bigg)^{-4\ta} 
K^{-4(j-2)\ta} \|\ft\uu\|_{\l^2_\xi}^{j+1} 
\bigg( \sum_{\zeta \in \Z_*} \frac{1}{|\zeta|^{1+2\eps_0}} \bigg)^{\frac{j-1}2}
\notag \\
& \quad 
\le
C^j j!\cdot\bigg(\prod_{k = 1}^{j-2} (k+2)!\bigg)^{-4\ta}
K^{-4(j-2)\ta}
 \|\ft\uu\|_{\l^2_\xi}^{j+1} 
  \too 0, 
\notag 
\end{align}

\noi
as $j \to \infty$, 
 provided that $0 <   \ta < \frac12$. 
This proves
\eqref{NX3}.

\medskip

\noi
(iii) We now prove the uniform (in $|t| \le T$) bound \eqref{NX6}, assuming \eqref{NX5}.
Fix small $\eps > 0$ and 
 $N \in \N$.
Then, 
given small $\eps_0 > 0$, 
%
it follows from the assumption
\eqref{NX5}
that 
\begin{align}
 \|\P_{> \dl^{-\frac 25 +\eps_0}} \uu_\dl\|_{L^2} < N^{-\frac 32} \eps
\label{PE7c}
\end{align}

\noi
for any sufficiently small
$\dl = \dl(\eps,   N, R)>0$.
In the following, we only consider the case $j \ge 3$, 
since similar analysis holds for the case 
 $j = 1, 2$ (but is simpler, using
the  bounds~\eqref{PE1a} and~\eqref{PEE1}).

\smallskip

\noi
$\bullet$ {\bf Case 1:}
 $\max\big(|\xi^{(j)}_1|, |\xi^{(j)}_2|\big) \le \dl^{-\frac 25 +\eps_0}$.
\\
\indent
In this case, we have  $|\xi^{(j)}| \les \dl^{-\frac 25 +\eps_0}$.
Then, 
from \eqref{Xi1} and \eqref{L3}, we have 
\begin{align}
\big|\Xi_\KDV \big(\xi^{(j)}, \xi_1^{(j)},\xi_2^{(j)}\big)
- \Xi_\dl \big(\xi^{(j)}, \xi_1^{(j)},\xi_2^{(j)}\big)\big|\les \dl^{5\eps_0}.
\label{PE8}
\end{align}

\noi
Thus, 
from 
\eqref{phi1} and 
the mean value theorem 
with \eqref{PE8}, we have 
\begin{align}
\big|\varphi_\dl\big(t, \xi^{(j)}, \xi^{(j)}_1, \xi^{(j)}_2\big)\big|
\les T \dl^{5\eps_0}
\label{PE9}
\end{align}

\noi
for any $|t| \le T$.
Then, 
by repeating  the computations in \eqref{PE4}
but without throwing away the factor
$\varphi_\dl\big(t, \xi^{(j)}, \xi^{(j)}_1, \xi^{(j)}_2\big)$, 
appearing in \eqref{NF13}, 
and applying \eqref{PE9} and \eqref{NX3a}, 
we have 
\begin{align*}
&
\sum_{j=1}^\infty \| \ind_{|\xi| \le N} \cdot  \EE^{(j)}_\dl(t) (\uu_\dl) \|_{\l^2_\xi} 
\\
& \quad 
\le
N^{\frac32} 
\sum_{j=1}^\infty
C^j j!\cdot\bigg(\prod_{k = 1}^{j-2} (k+2)!\bigg)^{-4\ta}
K^{-4(j-2)\ta}\\
& \quad 
 \quad 
\times \sup_{\TT_j \in \Tfr(j)}
\bigg\| 
\sum_{\substack{\xii\in \mathfrak{N}( \TT_{j})\\ \xii_r=\xi}}
\ind_{\max(|\xi^{(j)}_1|, |\xi^{(j)}_2|) \le \dl^{-\frac 25 +\eps_0}}
\cdot \big|\varphi_\dl\big(t, \xi^{(j)}, \xi^{(j)}_1, \xi^{(j)}_2\big)\big|\\
& \hphantom{XXXXXXX}\times 
 \bigg( \prod_{ a\in\TT^0_j \setminus \{r^{(j)} \}} 
 \frac{1}{\min(|\xi_{a_1}|,  | \xi_{a_2}|)^{\frac 12 + \eps_0}} \bigg) \prod_{a \in \TT^\infty_j} 
 |\ft\uu_\dl(\xi_a) |\bigg\|_{\l^\infty_\xi} 
\\
& \quad 
\les T N^{\frac32} 
 \dl^{5\eps_0}
\sum_{j = 1}^\infty K^{-4(j-2)\ta}
 R^{j+1}  \\
& \quad \les T N^{\frac32} 
 \dl^{5\eps_0}
 K^{12 \ta}, 
\end{align*}

\noi
uniformly in $j \ge 3$, 
 provided that $0 <   \ta < \frac12$, 
 where, in the last step,  we used 
\begin{align*}
\sum_{j = 1}^\infty K^{-4(j-2)\ta}
 R^{j+1} 
 \les K^{4\ta} R^2 
<   K^{12\ta}
\end{align*}

\noi
under the assumption \eqref{NX4a}: $K^{4\ta} > R$.
Therefore, 
by first choosing $\tau = \tau(K) = \tau(R)> 0$ sufficiently 
small 
and then $\dl = \dl(\eps,  T, N)>0$ sufficiently small, 
we obtain
\begin{align}
\text{LHS of \eqref{NX6}} \le C \tau 
T N^\frac{3}{2}
 K^{12\ta}
 \dl^{5\eps_0} 
 < \frac \eps 2.
\label{PE10}
\end{align}

\medskip

\noi
$\bullet$ {\bf Case 2:}
 $\max\big(|\xi^{(j)}_1|, |\xi^{(j)}_2|\big) >  \dl^{-\frac 25 +\eps_0}$.
\\
\indent
In this case, we do not make use
of a decay from $\big|\varphi_\dl\big(t, \xi^{(j)}, \xi^{(j)}_1, \xi^{(j)}_2\big)\big|$
but we simply bound it by $2$.
Without loss of generality, assume 
$|\xi^{(j)}_1| >  \dl^{-\frac 25 +\eps_0}$.
We note that 
the node $r^{(j)}_1$ corresponding to the frequency 
$\xi^{(j)}_1$
belongs to $\TT^\infty_j$.
Then, from a slight modification
of~\eqref{PE4} (and summing in $j \in \N$)
with \eqref{PE7c}
(for sufficiently small
$\dl = \dl(\eps,   N, R)>0$),
we obtain 
\begin{align}
\text{LHS of \eqref{NX6}} 
\le C \tau 
 N^\frac{3}{2} K^{8\ta}
 \cdot  N^{-\frac 32} \eps
<   \frac \eps 2
\label{PE11}
\end{align}

\noi
\noi
by  choosing $\tau = \tau(K) = \tau (R)> 0$ sufficiently 
small,  
provided that $K^{4\ta} > R$
such that we have 
\begin{align*}
\sum_{j = 1}^\infty K^{4(2-j) \ta}
 R^{j} 
 \les K^{4\ta} R
<   K^{8\ta}.
\end{align*}

\medskip

Therefore, 
the desired bound 
\eqref{NX6}
follows 
from \eqref{PE10} and \eqref{PE11}.
This concludes the proof of Proposition \ref{PROP:NN}
in the periodic case.
\end{proof}

\section{Proof of Proposition \ref{PROP:NN}:
on the real line}
\label{SEC:Euc}

In this section, we present a proof of Proposition~\ref{PROP:NN} 
on the real line  $\M=\R$. 
In the periodic setting presented in Section \ref{SEC:per}, 
thanks to the mean-zero assumption, 
frequencies were lower bounded by 1 in size, 
which is no longer true  on the real line.
In particular, 
we need to deal with  low frequency issues.
In some cases, 
a change of variables comes to the rescue.
In some other cases, however, 
we need to proceed with more intricate analysis;
see, for example, the discussion on 
$G_{\TT_j}$ defined in \eqref{RE4}
 below.

As in Section \ref{SEC:per}, 
all the estimates hold uniformly in $t \in \R$
and thus 
we drop the $t$-dependence of the multilinear operators
(except for Part (iii)).

\begin{proof}[Proof of Proposition \ref{PROP:NN}
on the real line]
(i)
We only prove 
\begin{align}
\sup_{t \in \R}
& \big\| \NN_0^{(j)} (t) (\uu)   \big\|_{L^2_\xi} 
 \le 
C
K^{-4j \ta}
 \| \ft\uu\|_{L^2_\xi}^{j+1},
\label{PK1}\\
\sup_{t \in \R}
& \big\|   \NN^{(j)}_1(t)  (\uu)  \big\|_{L^2_\xi}
  \le C 
 K^{6- 4j \ta}
 \| \ft\uu\|_{L^2_\xi}^{j+1}, 
 \label{PK2}
 \end{align}

\noi
uniformly in  $j \in \N$.
Once we have \eqref{PK1} and \eqref{PK2}, 
the difference estimates
\eqref{NX2} and~\eqref{NX1}
follow from a straightforward modification;
see the discussion at the end of Part (i) in 
the proof of Proposition \ref{PROP:NN}
on the circle, presented in Section \ref{SEC:per}.

Let us first establish the following basic estimate:
\begin{align}
\begin{split}
 \bigg\|
\int_{\xi = \xi_1 + \xi_2}
\ind_{|\xi \xi_1\xi_2|\ge \wt K}
\frac{|\xi|}{|\xi \xi_1\xi_2|} \prod_{k = 1}^2 f_k (\xi_k) d\xi_1 
\bigg\|_{L^2_\xi}
& 
\les \wt K^{-\ta} \prod_{k = 1}^2 \|f_k \|_{L^2_\xi}, 
\end{split}
\label{X1}
\end{align}

\noi
uniformly in $\wt K \ge 1$, 
provided that 
$0 <  \ta < \frac 12$.

By Cauchy-Schwarz's inequality, 
we have 
\begin{align}
\text{LHS of }\eqref{X1}
\les 
\|A(\xi)\|_{L^\infty_\xi}^\frac 12 
\prod_{k = 1}^2 \|f_k \|_{L^2_\xi}, 
\label{X2}
\end{align}

\noi
where $A(\xi)$ is given by 
\begin{align}
A(\xi) = \intt_{\xi = \xi_1 + \xi_2}
\ind_{|\xi \xi_1\xi_2|\ge \wt K}
\frac{|\xi|^2}{\jb{\xi \xi_1\xi_2}^{2}} d\xi_1, 
\label{X3}
\end{align}

\noi
where 
$\jb{x} = (1+x^2)^\frac 12$.
By symmetry, assume  that $|\xi_1| \ge |\xi_2| $, 
which implies $|\xi_1|\ges |\xi|$.
First, suppose that 
  $ |\xi_2| \ges 1$. 
Then, we have 
\begin{align}
\begin{split}
\eqref{X3}
&
\les 
\wt K^{-2\ta}
\intt_{\substack{\xi = \xi_1 + \xi_2\\|\xi_2|\ges 1}}
\frac{|\xi|^2}{\jb{\xi \xi_1\xi_2}^{2(1-\ta)}} d\xi_1
\les \wt K^{-2\ta}
\intt_{|\xi_2|\ges 1}
\frac{d\xi_2}{ | \xi_{2} |^{4-6\ta} } \\
& \les \wt K^{-2\ta}, 
\end{split}
\label{X4}
\end{align}

\noi
provided that $0 < \ta < \frac 12$.
Next, suppose that 
 $|\xi_{2}| \ll 1 \les  |\xi_{1}| \sim |\xi|$. 
In this case, 
we  perform a change of variables
 from $\xi_{1}$ to $\ze = \xi \xi_{1} (\xi - \xi_1)$. 
Note that we have 
\[|\partial_{\xi_{1}} \ze| = |\xi (\xi -2  \xi_{1})| 
= |\xi (\xi_{2} -  \xi_{1})|\sim |\xi|^2.\] 

\noi
Thus, we have 
\begin{align}
\begin{split}
\eqref{X3}
&
\les 
\wt K^{-2\ta}
\intt_{\substack{\xi = \xi_1 + \xi_2\\|\xi_{2}| \ll 1 \les  |\xi_{1}|}}
\frac{|\xi|^2}{\jb{\xi \xi_1\xi_2}^{2(1-\ta)}} d\xi_1
 \sim \wt K^{-2\ta}
\int \frac{1}{\jb{\ze}^{2(1-\ta)}} d\ze\\
& \les \wt K^{-2\ta}, 
\end{split}
\label{X5}
\end{align}

\noi
provided that $0 < \ta < \frac 12$.
Hence, the basic estimate \eqref{X1}
follows from \eqref{X2}, \eqref{X3}, \eqref{X4}, and~\eqref{X5}.

\medskip

We are now ready to prove \eqref{PK1}.
From \eqref{NF12} with \eqref{A4},  we have
\begin{align*}
\|  \NN_0^{(j)} (\uu) \|_{L^2_\xi}
\les 
\bigg\| 
 \sum_{\TT_j \in \Tfr(j)}
\intt_{\substack{\xii \in \Nf(\TT_j) \\ \xi_{r} = \xi} } \ind_{\bigcap_{k=1}^{j} A_k^c} \frac{  \prod_{k=1}^j | \xi^{(k)}|  }{\prod_{k=1}^{j} \jb{\mu_k}} 
\prod_{a \in \TT^\infty_j} |\ft\uu(\xi_a)|
\bigg\|_{L^2_\xi}.
\end{align*}

\noi
By iteratively applying the basic estimate \eqref{X1}
to the frequency sets $(\xi_{r^{(k)}}, \xi_{r^{(k)}_1}, \xi_{r^{(k)}_2})$
for the $k$th generation
with the lower bound on $|\mu_k|$ as in \eqref{A4}
and \eqref{TJ0}, 
we obtain
\begin{align}
\begin{split}
\|  \NN_0^{(j)} (\uu) \|_{L^2_\xi}
& \le
C^{j} j!\cdot
\bigg(\prod_{k = 1}^{j} (k+2)!\bigg)^{-4\ta}
 K^{-4j \ta} \|\ft\uu\|_{L^2_\xi}^{j+1} \\
& \les
 K^{-4j \ta} \|\ft\uu\|_{L^2_\xi}^{j+1} , 
\end{split}
\label{PK8}
\end{align}

\noi
 provided that $0  <   \ta < \frac12$. 
This proves
\eqref{PK1}.

\medskip

Next, we prove \eqref{PK2}.
From \eqref{NF12} with \eqref{A4a},  we have
\begin{align}
\|  \NN_1^{(j)} (\uu) \|_{L^2_\xi}
\les 
\bigg\| 
 \sum_{\TT_j \in \Tfr(j)}
\intt_{\substack{\xii \in \Nf(\TT_j) \\ \xi_{r} = \xi} } \ind_{A_j\cap (\bigcap_{k=1}^{j-1} A_k^c)} \frac{  \prod_{k=1}^j | \xi^{(k)}|  }{\prod_{k=1}^{j-1} \jb{\mu_k}} 
\prod_{a \in \TT^\infty_j} |\ft\uu(\xi_a)|
\bigg\|_{L^2_\xi}.
\label{PKK1}
\end{align}

\noi
{\bf Step 1:}
We first consider the   case $j = 1$.
If $|\xi|\les 1$, then Young's inequality yields
\begin{align*}
\|  \NN_1^{(1)} (\uu) \|_{L^2_\xi}
\les \| \ft \uu \|_{L^2_\xi}^2.
\end{align*}

\noi
Thus, we assume that $|\xi|\gg 1$.
From \eqref{A1}, we have $\jb{\xi\xi_1\xi_2} \les K^4$.
Then, by the basic estimate \eqref{X1} with $\ta = 0$, we have 
\begin{align*}
\|  \NN_1^{(1)} (\uu) \|_{L^2_\xi}
& \les K^4 \bigg\|
\intt_{\xi = \xi_1 + \xi_2} \frac{|\xi|}{\jb{\xi\xi_1\xi_2}}\prod_{k = 1}^2 |\ft \uu(\xi_k)|
d\xi_1 \bigg\|_{\l^2_\xi}\\
& \les K^4 \| \ft \uu \|_{L^2_\xi}^2.
\end{align*}

\noi
This proves  \eqref{PK2} for $j = 1$.

\medskip

\noi
{\bf Step 2:}
Fix $j \ge 2$.
We first consider the case
$|\xi^{(j)}|\les 1$.
From \eqref{PKK1}
with the notations in Definition~\ref{DEF:tree3}, we have 
\begin{align*}
& \|  \NN_1^{(j)} (\uu) \|_{L^2_\xi}\\
& \quad \les 
 \sum_{\TT_j \in \Tfr(j)}
\Bigg\{\bigg\| 
\intt_{\substack{\xii \in \Nf(\Pi_{j-1}(\TT_j)) \\ \xi_{r} = \xi} } \ind_{\bigcap_{k=1}^{j-1} A_k^c} 
\frac{  \prod_{k=1}^{j-1} | \xi^{(k)}|  }{\prod_{k=1}^{j-1} |\mu_k|} 
 \prod_{a \in \Pi_{j-1}(\TT_{j})^\infty \setminus \{r^{(j)}\}} |\ft\uu(\xi_a)|
\bigg\|_{L^2_\xi}\\
& \quad  \hphantom{XXXXXX}
\times 
\bigg\| \intt_{\xi^{(j)}= \xi^{(j)}_1+ \xi^{(j)}_2}
|\ft \uu(\xi^{(j)}_1)||\ft \uu(\xi^{(j)}_2)|d \xi^{(j)}_1
\bigg\|_{L^\infty_{|\xi^{(j)}|\les1}}
\Bigg\}.
\end{align*}

\noi
Noting that the first factor on the right-hand side
essentially corresponds to 
$ \NN_0^{(j-1)} (\uu)$ to which we apply \eqref{PK8} (with $j$ replaced by $j-1$)
and applying Young's inequality to the second factor
on the right-hand side above, 
we obtain
\begin{align*}
\|  \NN_1^{(j)} (\uu) \|_{L^2_\xi}
& \le
C^{j}\cdot j!\cdot 
\bigg(\prod_{k = 1}^{j-1} (k+2)!\bigg)^{-4\ta}
K^{-4(j-1) \ta} \|\ft\uu\|_{L^2_\xi}^{j+1} \\
& 
\les  
K^{-4(j-1) \ta} \|\ft\uu\|_{L^2_\xi}^{j+1} ,  
\end{align*}

\noi
 provided that $0  <   \ta < \frac12$. 
In the following, we assume that $|\xi^{(j)}|\gg 1$.

\medskip

\noi
{\bf Step 3:}
Next, we consider the  case $j = 2$
(assuming that $|\xi^{(j)}|\gg 1$).
Without loss of generality, assume that $ r^{(2)} = r_1$
(= the left child of the root node $r$ of $\TT_2$), 
which implies that $|\xi^{(2)}| = |\xi^{(1)}_1| \gg1 $.
Moreover, by symmetry, we assume that 
$|\xi^{(2)}_1|\ge |\xi^{(2)}_2|$.
Under these assumptions,  arguing as in the $j = 2$ case 
in the proof of  \eqref{PQ2}
presented in Section \ref{SEC:per}, 
we see that 
$|\xi^{(2)}_2| \les |\xi^{(1)}_2|$.
Then, by proceeding as in 
\eqref{PQ6c}
with Schur's test
(which also holds for 
dyadic numbers  $N^{(k)}_2\in 2^\Z$, $k = 1, 2$), 
 we obtain 
\eqref{PK2} for $j = 2$.

\medskip

\noi
{\bf Step 4:}
We now consider the general case $j \ge 3$
(assuming that $|\xi^{(j)}|\gg 1$).
We first consider the case 
 $\pb(r^{(j)})  = r^{(j-1)}$.
In this case, 
from \eqref{PKK1}, we have 
\begin{align}
\begin{split}
\|  \NN_1^{(j)} (\uu) \|_{L^2_\xi}
& \les 
\bigg\| 
 \sum_{\substack{\TT_j \in \Tfr(j)\\\pb(r^{(j)})  = r^{(j-1)}}}
\intt_{\substack{\xii \in \Nf(\Pi_{j-2}(\TT_j)) \\ \xi_{r} = \xi} } \ind_{\bigcap_{k=1}^{j-2} A_k^c} \frac{  \prod_{k=1}^{j-2} | \xi^{(k)}|  }{\prod_{k=1}^{j-2} \jb{\mu_k}} \\
&\hphantom{XX} 
\times \prod_{a \in \Pi_{j-2}(\TT^\infty_j) \setminus\{r^{(j-1)}\}} 
|\ft\uu(\xi_a)|
\cdot |Q(\xi_{r^{(j-1)}})|
\bigg\|_{L^2_\xi}, 
\end{split}
\label{PKK11}
\end{align}

\noi
where
$Q(\xi_{r^{(j-1)}})$ is given by 
\begin{align*}
Q(\xi_{r^{(j-1)}})
= 
\intt_{\substack{\xi^{(j-1)} = \xi^{(j-1)}_1+\xi^{(j-1)}_2\\
\xi^{(j-1)}_1 = \xi^{(j)}_1+\xi^{(j)}_2}}
\ind_{A_j\cap A_{j-1}^c} 
\frac{|\xi^{(j-1)}\xi^{(j)}|}{\jb{\mu_{j-1}}}
|\ft \uu(\xi^{(j-1)}_2)|
\prod_{k = 1}^2|\ft \uu(\xi^{(j)}_k)|.
\end{align*}

\noi
Here, without loss of generality, we assumed that 
$r^{(j)} = r^{(j-1)}_1$, 
namely, 
$r^{(j)}$
 is the left child of $r^{(j-1)}$
(in the planar graphical representation of the binary tree $\pi_{j-1}(\TT_j)$;
see Definition~\ref{DEF:tree3}
for the notation $\pi_{j-1}(\TT_j)$).

From the $j = 2$ case discussed above, 
we immediately see that 
\begin{align}
\|Q(\xi_{r^{(j-1)}})\|_{L^2_{\xi_{r^{(j-1)}}}}
 \les
\|\ft\uu\|_{L^2_\xi}^{3}.
\label{PKK12}
\end{align}

\noi
Furthermore, by noting 
that the right-hand side of 
 \eqref{PKK11} 
corresponds to $\NN_0^{(j-2)}$ 
(where $\ft \uu (\xi_{r^{(j-1)}})$
is replaced by $Q(\xi_{r^{(j-1)}})$), 
it follows from 
\eqref{PK1} and 
\eqref{PKK12}
that 
\begin{align*}
\|  \NN_1^{(j)} (\uu) \|_{L^2_\xi}
 \le 
C
K^{-4(j-2) \ta}
 \| \ft\uu\|_{L^2_\xi}^{j+1},
 \end{align*}

\noi
yielding
\eqref{PK2} in this case, 
provided that $0  < \ta < \frac 12$.

\medskip

\noi
{\bf Step 5:}
Finally, we consider the case  $\pb(r^{(j)})  \ne r^{(j-1)}$
(assuming that $|\xi^{(j)}|\gg 1$).
In this case,  $\pi_{j-1}(\TT_j)$
and $\pi_{j}(\TT_j)$ are disjoint
and both appear at the bottom of the ordered tree $\TT_j$
(in the planar graphical representation of  $\TT_j$, 
growing downward).
There exists a unique node $r^{(\l)}$ for some 
$\l \in \{1,2, \dots, j-2\}$
such that 
\begin{align}
\pi_{j-1}(\TT_j), 
\pi_{j}(\TT_j)
\subset \TT_{r^{(\l)}}:= \big\{ a\in \TT_j: a\le r^{(\l)}\big\}
\label{XA0}
\end{align}

\noi
but no (connected) subtree of 
$\TT_{r^{(\l)}}$
contains both 
$\pi_{j-1}(\TT_j)$ and $\pi_{j}(\TT_j)$.
Namely, if we consider the unique
paths from $r^{(k)}$, $k = j-1, j$,  to the root node of $\TT_j$,
then these paths meet at $r^{(\l)}$.
We note that both children of $r{(\l)}$ are non-terminal.
This fact allows us to prove an improved estimate
for the contribution coming from $\pi_\l(\TT_j)$
(see $S_1(f_1, f_2)$ in \eqref{XA2} and~\eqref{XA2a}).
Without loss of generality, we assume that 
\begin{align}
\begin{split}
\pi_{j-1}(\TT_j)
& \subset \TT_{r^{(\l)}_1}:= \big\{ a\in \TT_j: a\le r^{(\l)}_1\big\}, \\
\pi_{j}(\TT_j)
& \subset \TT_{r^{(\l)}_2}:= \big\{ a\in \TT_j: a\le r^{(\l)}_2\big\}
\end{split}
\label{XA1a}
\end{align}

\noi
in the following, 
where $r^{(\l)}_1$
(and $r^{(\l)}_2$) denotes the left child 
(and the right child, respectively) of $r^{(\l)}$
(in the planar graphical representation of  $\TT_j$).

The contribution from the subtree 
$\TT_{r^{(\l)}}$ is bounded by 
\begin{align}
\begin{split}
R(\xi_{r^{(\l)}}) = 
\int
\frac{\prod_{a\in \TT_{r^{(\l)}}^0}|\xi_{a}|}
{\prod_{a\in \TT_{r^{(\l)}}^0\setminus\{r^{(j-1)}, r^{(j)}\}}
\jb{\mu_{\gf(a)}}}
\frac{1}{\jb{\mu_{j-1}}} 
 \prod_{a\in \TT_{r^{(\l)}}^\infty}
|\ft \uu(\xi_a)|, 
\end{split}
\label{XA1}
\end{align}

\noi
where 
the integration is over 
\begin{align*}
\xi_{a} = \xi_{a_1}+\xi_{a_2}, \quad 
a\in \TT_{r^{(\l)}}^0 
\end{align*}

\noi
and
these frequencies satisfy \eqref{A4a}.
Here, $\gf(a)$ is as in Definition \ref{DEF:tree3}
such that   $a = r^{(\gf(a))}$,
namely, 
$\gf(a)$ denotes the generation
(in the original ordered tree $\TT_j$)
where $a\in \TT_{r^{(\l)}}^0\subset \TT_j$ appears as a parent.

In the following, 
we estimate $R(\xi_{r^{(\l)}})$
by successively applying various trilinear estimates.
For this purpose, we need to introduce several trilinear operators.
Given  $0 < \ta < \frac 12$, 
fix  $\al = \al(\ta) > 0$ (to be chosen later).
Define $S_1(f_1, f_2)$ by setting
\begin{align}
S_1(f_1, f_2)(\xi) = 
\int_{\xi = \xi_1 + \xi_2}
\ind_{|\xi \xi_1\xi_2|\ge 1}
\frac{|\xi| |\xi_1|^\al |\xi_2|^\al }{|\xi \xi_1\xi_2|^{1-\ta}} 
\prod_{k = 1}^2 |f_k (\xi_k)| d\xi_1. 
\label{XA2}
\end{align}

\noi
This term represents
the contribution from $\pi_\l(\TT_j)$
(i.e.~the first generation within the subtree
$\TT_{r^{(\l)}}$ defined in \eqref{XA0}), 
where we also absorb $\al$-derivatives
from both children
which are assumed to be non-terminal.
Next, we define $S_2(f_1, f_2)$ by 
\begin{align}
S_2(f_1, f_2)(\xi) 
& = 
\int_{\xi = \xi_1 + \xi_2}
\ind_{|\xi \xi_1\xi_2|\ge 1}
\frac{|\xi|^{1-\al} |\xi_1|^\al }{|\xi \xi_1\xi_2|^{1-\ta}} 
\prod_{k = 1}^2 |f_k (\xi_k)| d\xi_1, 
\label{XA2x}
\end{align}

\noi
where $\al = \al(\ta)> 0 $ is as above.
The term $S_2$ 
represents the 
contribution from 
 subtrees of one generation in $\TT_{r^{(\l)}}$
lying on the path from 
$r^{(\l)}_1$ to $r^{(j-1)}$
(or from $r^{(\l)}_2$ to $r^{(j)}$)
{\it except} for 
the last two subtrees $\pi_{j-1}(\TT_j)$
and  $\pi_{j}(\TT_j)$, 
where we assume that the left child is non-terminal.
Lastly,  we  consider the contribution
from 
the last two subtrees $\pi_{j-1}(\TT_j)$
and  $\pi_{j}(\TT_j)$.
Without loss of generality, assume that 
\begin{align*}
|\xi_1^{(k)}|\ge |\xi_2^{(k)}|,\quad  k = j-1, j.
\end{align*}

\noi
This implies that   
\begin{align}
|\xi_1^{(k)}| \ges |\xi^{(k)}|
\quad \text{and}\quad 
|\xi_1^{(k)}|\gg 1, \quad k = j-1, j, 
\label{XX1b}
\end{align}

\noi
in view of  $|\mu_{j-1}|\gg1$ and $|\xi^{(j)}|\gg1$.
Moreover, 
from  \eqref{A4a}, we have 
\begin{align}
\frac{1}{|\mu_{j-1}|} \les
j^{2-\al}
\frac 1{\jb{\mu_{j-1}}^{\frac 12 - \frac 14 \al}}
\frac {1}{\jb{\mu_{j}}^{\frac 12 - \frac 14 \al}}. 
\label{XX1c}
\end{align}

\noi
In view of \eqref{XX1b} and 
\eqref{XX1c}, we then 
define $S_3(f_1, f_2)$ by 
\begin{align}
S_3(f_1, f_2)(\xi) 
& = 
\int_{\xi = \xi_1 + \xi_2}
\ind_{|\xi_1 |\gg 1}
\frac{|\xi|^{1-\al} }{|\xi \xi_1\xi_2|^{\frac 12 - \frac 14 \al}} 
\prod_{k = 1}^2 |f_k (\xi_k)| d\xi_1, 
\label{XA5}
\end{align}

\noi
representing the 
contribution from 
 $\pi_{j-1}(\TT_j)$
or   $\pi_{j}(\TT_j)$, 
where $\al = \al(\ta)> 0 $ is as above.
The whole point of ``shifting $\al$-derivatives'' to previous generations
for the nodes
lying
 on the path from 
$r^{(\l)}_1$ to $r^{(j-1)}$
(or from $r^{(\l)}_2$ to $r^{(j)}$)
 is to guarantee boundedness of  $S_3$
from $(L^2_\xi)^{\otimes 2}$
 into $L^2_\xi$.
 When $|\xi_2|\ll 1$, 
we need the power of $|\xi_2|$ in the denominator (in \eqref{XA5}) to be less than $\frac12$
(such that we can sum over dyadic blocks
for the frequency $|\xi_2|\ll 1$), 
which in turn forces us to have the power of $|\xi|$ in the numerator
to be less than $1$.
See Case 5.3 below.

We claim that there exists $\al = \al(\ta) > 0$ such that 
\begin{align}
\|S_m(f_1, f_2)\|_{L^2_\xi}
\les 
\prod_{k = 1}^2 \|f_k \|_{L^2_\xi}, \quad m = 1, 2, 3, 
\label{XA2a}
\end{align}

\noi
provided that $\frac 1{14} <  \ta < \frac 12$.

\medskip

\noi
$\bul$ {\bf Case 5.1:} $m = 1$.
\\
\indent
By symmetry,  assume that $|\xi_1|\ge |\xi_2|$, 
which implies $|\xi_1|\ges 1$
under the constraint
$|\xi \xi_1\xi_2|\ge 1$.
We first consider the case  $|\xi_2| \les |\xi|$.
By applying dyadic decompositions $|\xi_k|\sim N_k$, $k = 1, 2$, 
and Cauchy-Schwarz's inequality
to \eqref{XA2}, we have 
\begin{align*}
\|S_1(f_1, f_2)\|_{L^2_\xi}
\les \sum_{\substack{N_1 \ges \max(N_2, 1)\\\text{dyadic}}}
\frac{N_2^{\ta + \al - \frac 12} }{N_1^{1- 2\ta - \al}}
\prod_{k = 1}^2 \|\P_{N_k} f_k \|_{L^2_\xi}.
\end{align*}

\noi
Then, by choosing 
\begin{align}
\al = \frac 23 - \frac 43 \ta> 0,
\label{XA3a} 
\end{align}

\noi
we have 
\begin{align}
\|S_1(f_1, f_2)\|_{L^2_\xi}
\les \sum_{\substack{N_1 \ges N_2\\\text{dyadic}}}
\frac{N_2^{\frac 14  \al} }{N_1^{\frac 12 \al}}
\prod_{k = 1}^2 \|\P_{N_k}f_k \|_{L^2_\xi}
\les 
\prod_{k = 1}^2 \|f_k \|_{L^2_\xi}.
\label{XA3}
\end{align}

\noi
Here, 
 the  power 
$N_2^{\frac 14 \al}$ in the numerator
is needed to sum over 
$N_2\ll1$.

Next, we  consider the case  $|\xi_2| \gg |\xi|$, 
which implies $|\xi_1|\sim |\xi_2|\ges 1$.
Then, by Cauchy-Schwarz's inequality, we have 
\begin{align}
\|S_1(f_1, f_2)\|_{L^2_\xi}
\les \sum_{\substack{N_1 \sim N_2 \ges 1\\\text{dyadic}}}
\frac{1}{N_1^{\frac 14\al}}
\prod_{k = 1}^2 \|\P_{N_k} f_k \|_{L^2_\xi}
\les 
\prod_{k = 1}^2 \|f_k \|_{L^2_\xi}.
\label{XA4}
\end{align}

\noi
Hence, the bound \eqref{XA2a} for $m = 1$ follows
from \eqref{XA3} and \eqref{XA4}, 
provided that $0 < \ta < \frac 12$.

\medskip

\noi
$\bul$ {\bf Case 5.2:} $m = 2$.
\\
\indent
We first consider the case  $|\xi_1|\gg |\xi_2|$, which implies $|\xi|\sim |\xi_1|$.
In this case, we have
$|\xi|^{1-\al} |\xi_1|^\al \sim |\xi|$
and thus 
\eqref{XA2a} follows from 
(the proof of)
the basic estimate \eqref{X1}.

Next, we consider the case
 $|\xi_1|\les |\xi_2|$.
Then, we have $|\xi_2| \ges 1$
under the constraint
$|\xi \xi_1\xi_2|\ge 1$.
If $|\xi|\ges 1$, then
it follows from \eqref{XA2} and \eqref{XA2x} that  $|S_2(f_1, f_2)| \les 
|S_1(f_1, f_2)| $.
Thus,  \eqref{XA2a}
follows from Case 5.1.
If $|\xi|\ll 1$, then
we have $|\xi_1|\sim| \xi_2|\ges 1 \gg |\xi|$.
By applying dyadic decompositions 
$|\xi|\sim N$ and 
$|\xi_k|\sim N_k$, $k = 1, 2$, 
and Cauchy-Schwarz's inequality
with \eqref{XA3a}, we have 
\begin{align*}
\|S_2(f_1, f_2)\|_{L^2_\xi}
& \les \sum_{\substack{N_1 \sim N_2 \ges 1 \gg N\\\text{dyadic}}}
\frac{N^{1- \frac 74\al} }{N_1^{1+ \frac 12 \al}}
\prod_{k = 1}^2 \|\P_{N_k} f_k \|_{L^2_\xi} \les 
\prod_{k = 1}^2 \|f_k \|_{L^2_\xi}
\end{align*}

\noi
provided that $0 < \al <  \frac 47$, 
namely, $\frac 1{14} <  \ta < \frac 12$ in view of \eqref{XA3a}.

\medskip

\noi
$\bul$ {\bf Case 5.3:} $m = 3$.
\\
\indent
We first consider the case  $|\xi_2| \les |\xi|$, 
which implies
  $|\xi|\sim |\xi_1|\gg 1$.
By applying dyadic decompositions,  
 Cauchy-Schwarz's inequality, and \eqref{XA3a}
to~\eqref{XA5}, we have 
\begin{align}
\|S_3(f_1, f_2)\|_{L^2_\xi}
\les \sum_{\substack{N_1 \ges \max(N_2, 1)\\\text{dyadic}}}
\frac{N_2^{\frac 14 \al} }{N_1^{\frac 12  \al}}
\prod_{k = 1}^2 \|\P_{N_k} f_k \|_{L^2_\xi}
\les 
\prod_{k = 1}^2 \| f_k \|_{L^2_\xi}.
\label{XA6}
\end{align}

\noi
Next, we  consider the case  $|\xi_2| \gg |\xi|$, 
which implies $|\xi_1|\sim |\xi_2|\gg 1$.
Then, by Cauchy-Schwarz's inequality with \eqref{XA3a}, we have 
\begin{align}
\|S_3(f_1, f_2)\|_{L^2_\xi}
\les \sum_{\substack{N_1 \sim N_2 \ges 1\\\text{dyadic}}}
\frac{1}{N_1^{\frac 14\al}}
\prod_{k = 1}^2 \|\P_{N_k} f_k \|_{L^2_\xi}
\les 
\prod_{k = 1}^2 \|f_k \|_{L^2_\xi}.
\label{XA7}
\end{align}

\noi
Hence, the bound \eqref{XA2a} for $m = 3$ follows
from \eqref{XA6} and \eqref{XA7}, 
provided that $0 < \ta < \frac 12$.

\medskip

Now, we turn our attention to estimating  $R(\xi_{r^{(\l)}})$ in \eqref{XA1}.
Under the assumption \eqref{XA1a}, 
consider the  (disjoint) paths
$\mathfrak{P}_1$ and $\mathfrak{P}_2$ in $\TT_{r^{(\l)}}$ given by 

\smallskip
\begin{itemize}
\item
$\mathfrak{P}_1= (a^1, a^2, \dots, a^{m_1})$
from 
$a^1 : = r^{(\l)}_1$ to $a^{m_1} := r^{(j-1)}$, 
where $\pb(a^{\l+1}) =  a^{\l}$, and 

\smallskip
\item 
$\mathfrak{P}_2 = (b^1, b^2,  \ldots,  b^{m_2})$
from 
$b^1 : = r^{(\l)}_2$ to $a^{m_2} := r^{(j)}$ 
where $\pb(b^{\l+1}) =  b^{\l}$.

\end{itemize}

\smallskip

\noi
By symmetry, we  assume that $a^{\l+1}$ 
(and $b^{\l+1}$) is always the left child of $a^\l$ (and of $b^\l$, respectively).
We also write 
$a^k = r^{(\l_k)}$
and 
$b^k = r^{(\wt \l_k)}$
for some $\l_k$ and $\wt \l_k$ which are increasing in $k$
such that $\l_{m_1} = j-1$ and $\wt \l_{m_2} = j$.
With this notation, 
we first apply \eqref{XA2a} with $m = 1$ 
to $\pi_\l(\TT_j)$
and then iteratively apply

\smallskip

\begin{itemize}
\item \eqref{XA2a} with $m = 2$
to $\pi_{\l_k}(\TT_j)$, $k = 1, \dots, m_1-1$, 
and 
$\pi_{\wt \l_k}(\TT_j)$, $k = 1, \dots, m_2-1$,
namely subtrees of one generation 
(consisting of three nodes; one parent with two children, 
at least one of which is non-terminal)
in $\TT_{r^{(\l)}_1}$ or $\TT_{r^{(\l)}_2}$
which have
non-trivial intersection with the path $\mathfrak{P}_1$
or $\mathfrak{P}_2$,

\smallskip

\item
\eqref{XX1c} and
\eqref{XA2a} with $m = 3$
to $\pi_{j-1}(\TT_j)$ and $\pi_{j}(\TT_j)$,

\smallskip

\item the basic estimate \eqref{X1}
to all the other 
subtrees of one generation
in $\TT_{r^{(\l)}_1}$ or $\TT_{r^{(\l)}_2}$
defined in \eqref{XA1a}.

\end{itemize}

\smallskip

\noi
Recalling from \eqref{PKK1}
that we are on $A_j\cap (\bigcap_{k=1}^{j-1} A_k^c)$,
we use 
\eqref{A4a}
in applying \eqref{XA2a} for $m = 1$ or $2$
in the process described above.
This yields
\begin{align}
\begin{split}
\|R(\xi_{r^{(\l)}})\|_{L^2_{\xi_{r^{(\l)}}}}
& \le C^j
j^{2-\al}
\bigg(\prod_{a\in \TT_{r^{(\l)}}^0\setminus\{r^{(j-1)}, r^{(j)}\}}
(\gf(a)+2)! 
\bigg)^{-4\ta}\\
& \quad \times K^{-4(M-2)\ta}
\|\ft \uu\|_{L^2_\xi}^M, 
\end{split}
\label{XA8}
\end{align}

\noi
provided that 
$\frac 1{14} <  \ta < \frac 12$, 
where $M = |\TT_{r^{(\l)}}^\infty|$.
Therefore, 
iteratively applying the basic estimate~\eqref{X1}
and also applying \eqref{XA8}
and \eqref{TJ0}
to the right-hand side of \eqref{PKK1}, 
we obtain~\eqref{PK2} in this case, 
 provided that $\frac 1{14}<  \ta < \frac 12$.
This concludes the proof of Proposition \ref{PROP:NN}\,(i).

\medskip

\noi
(ii) 
From \eqref{NF13a} with \eqref{NF12} and \eqref{A4},  we have
\begin{align}
\begin{split}
 \|  \wt \NN_2^{(j)} (\uu) \|_{L^\infty_\xi}
& \les 
\bigg\| 
 \sum_{\TT_j \in \Tfr(j)}
\intt_{\substack{\xii \in \Nf(\TT_j) \\ \xi_{r} = \xi} }
 \ind_{\bigcap_{k=1}^{j-1} A_k^c} 
\Big(\ind_{|\xi^{(1)}|\ge \frac{1}{2}}  
+  \ind_{|\xi^{(1)}|<  \frac{1}{2}}\cdot 
|\xi^{(1)}|\Big)\\
& 
\hphantom{XXXXXXXX}
\times  \frac{  \prod_{k=2}^j | \xi^{(k)}|  }{\prod_{k=1}^{j-1} \jb{\mu_k}} 
\prod_{a \in \TT^\infty_j} |\ft\uu(\xi_a)|
\bigg\|_{L^\infty_\xi}.
\end{split}
\label{RE1}
\end{align}

Let $j = 1$.\footnote{As noted in Section \ref{SEC:per}, 
we present the $j = 1$ case since it is needed to treat the error term $\EE_\dl^{(j)}$
in Part (iii) below.} From Young's inequality, we have 
\begin{align*}
\text{RHS of }\eqref{RE1}
\les  \| \ft\uu\|_{L^2_\xi}^2.
\end{align*}

\noi
Fix $j\ge2$.
Recall from 
Definition \ref{DEF:tree3}
that 
$r^{(j)}$ is the non-terminal node in $\pi_j(\TT_j)$
such that 
$\xi_{r^{(j)}} = \xi^{(j)}$.
Then,  from \eqref{RE1}, \eqref{mu1},  and  Cauchy-Schwarz's inequality
(see \eqref{PE4}), 
we have 
\begin{align}
\text{RHS of }\eqref{RE1}
\le C^j 
\| \ft\uu\|_{L^2_\xi}^{j+1}
\sum_{\TT^j \in \Tfr(j)} 
\|
 G_{\TT_j}(\xi) ^\frac 12
\|_{L^\infty_\xi}, 
\label{RE3}
\end{align}

\noi
where
$G_{\TT_j}$ is defined by 
\begin{align}
\begin{split}
 G_{\TT_j}(\xi) 
 &  = \intt_{\substack{\xii \in \Nf(\Pi_{j - 1}(\TT_j)) \\ \xi_{r} = \xi} }
\ind_{\bigcap_{k=1}^{j-1} A_k^c}
\Big(\ind_{|\xi^{(1)}|\ge \frac{1}{2}}  
+  \ind_{|\xi^{(1)}|<  \frac{1}{2}}\cdot 
|\xi^{(1)}|^2\Big)\\
& 
\hphantom{XXXXXX}
\times 
 |\xi_{r^{(j)}}|^2 
 \frac{ \prod_{a \in \TT^0_j \setminus\{r, r^{(j)}\}}|\xi_a|^2}
 { \prod_{a \in \TT^0_j \setminus\{r^{(j)}\}}\jb{\xi_a \xi_{a_1} \xi_{a_2}}^2}, 
\end{split}
\label{RE4}
\end{align}

\noi
where $r$ denotes the root node of $\TT_j$
and $\Pi_{j - 1}(\TT_j)$ is as in Definition \ref{DEF:tree3}.
In the following, 
we estimate  the $j-1$ integrals, appearing  in the definition of $G_{\TT_j}$.

Fix $\TT_j \in \Tfr(j)$.
Consider the unique path in  $\TT_j$ from  
$b^1 : = r $ (= the root node of $\TT_j$) to $b^m := r^{(j)}$ (for some $m \in \{1, \dots, j\}$) given by 
\begin{align*}
(b^1, b^2, \ldots,  b^m), 
\end{align*}

\noi
where $\pb(b^{\l+1}) =  b^{\l}$.
By symmetry, we  assume that $b^{\l+1}$ is always the left child of $b^\l$
(in the planar graphical representation of the binary tree $\TT_j$)
 and thus we write $b^{\l+1} = b^\l_1$, 
 while $\sbb(b^{\l+1}) = b^\l_2$, $\l = 1, \dots, m-1$.
We divide the argument into the following three cases:

\smallskip
\begin{itemize}
\item[\bf (ii.1)]
$|\xi_{b^{m-1}_2}| = |\xi_{\sbb(b^m)}| \ges 1$,

\smallskip
\item[\bf (ii.2)]
there exists $m_* \in \{1, \dots, m-2\}$ such that 
$|\xi_{b^\l_2}| \ll 1 $ for $\l=m_*+1, \ldots, m -1$ and $|\xi_{b^{m_* }_2 }| \ges 1$, 

\smallskip

\item[\bf (ii.3)] 
$|\xi_{b^\l_2}| \ll 1 $ for $\l=1, \ldots, m-1$.
\end{itemize}

\smallskip

\noi
In the following, 
we estimate the integrals in \eqref{RE4}
involving $\xi_{a} = \xi_{a_1} + \xi_{a_2}$ for $a\in 
\{b^\l, b^{\l+1}, \ldots, b^m\}$, 
where $\l = m-1$ in Case\,(ii.1), 
$\l = m_*$ in Case\,(ii.2), 
and $\l = 1$ in Case\,(ii.3).

\medskip
\noi 
$\bul$ \textbf{Case (ii.1):} $|\xi_{b^{m-1}_2}| = |\xi_{\sbb(b^m)}| \ges 1$.
\\
\indent
Recalling that 
$r^{(j)} = b^m = b^{m-1}_1$, 
we have
\begin{align}
\label{RE5}
\begin{split}
& \intt_{\xi_{b^{m-1}} 
= \xi_{b^{m-1}_1} + \xi_{b^{m-1}_2}} 
\frac{|\xi_{r^{(j)}}\xi_{b^{m-1}} |^2}{\jb{\xi_{b^{m-1}} \xi_{b^{m-1}_1} \xi_{b^{m-1}_2}}^2} d\xi_{b^{m-1}_1}\\
&\hphantom{XXXX}
\les 
\intt_{|\xi_{b^{m-1}_2}|\ges 1} 
\frac{1}{ | \xi_{b^{m-1}_2} |^2  } d \xi_{b^{m-1}_2} \les 1. 
\end{split}
\end{align}

\medskip

\noi 
$\bul$ \textbf{Case (ii.2):} 
$|\xi_{b^\l_2}| \ll 1 $, $\l=m_*+1, \ldots, m -1$,  and $|\xi_{b^{m_* }_2 }| \ges 1$
for some $m_* \in \{1, \dots, m-2\}$.
\\
\indent
In \eqref{RE4}, the frequencies  $(\xi_{b^\l}, \xi_{b^{\l}_1}, \xi_{b^{\l}_2})$ are restricted to 
the non-resonant set $A^c_{\gf(b^\l)}$.
Here,   $\gf(a)$ is as in Definition \ref{DEF:tree3}, 
denoting  the generation
(in the original ordered tree $\TT_j$)
where $a\in  \TT_j$ appears as a parent.
Thus,  we have 
 $|\xi_{b^\l} \xi_{b^\l_1} \xi_{b^\l_2}| \gg 1$ 
for $\l=1, \ldots, m-1$, which in turn implies that 
\begin{align}
\label{RE6}
\begin{split}
& |\xi_{b^{\l}_2} |  \ll 1 \les | \xi_{b^{\l+1}} | = |\xi_{b^{\l}_1}| \sim |\xi_{b^{\l}}| = |\xi_{b^{\l-1}_1}|, 
\quad \l=m_*+1, \ldots, m-1, \\
&  \text{and} 
\quad 
|\xi_{b^{m_*}_2}| \ges 1. 
\end{split}
\end{align}

\noi
In particular, we have
\begin{align}
|\xi_{r^{(j)}}| = |\xi_{b^m}| \le C^j |\xi_{b^{m_* }_1}|.
\label{RE7}
\end{align}

We first consider the case $m_*\ge 2$.
Using \eqref{RE6} and
\eqref{RE7} with \eqref{A4}, 
we can bound 
 the $m-m_*$ integrals on $\xi_{b^\l} = \xi_{b^\l_1} + \xi_{b^\l_2}$, 
$\l=m_*, \ldots, m -1$, 
in \eqref{RE4} by 
\begin{align}
&
\intt_{\substack{\xi_{b^\l} = \xi_{b^\l_1} + \xi_{b^\l_2} \\ \l=m_*, \ldots, m-1 }}
|\xi_{r^{(j)}}|^2 \prod_{\l=m_*}^{m-1 } \frac{|\xi_{b^\l}|^2 }{ \jb{\xi_{b^\l} \xi_{b^\l_1} \xi_{b^\l_2}}^{2}} 
\notag \\
&
\quad 
\les
\intt_{\substack{\xi_{b^\l} = \xi_{b^\l_1} + \xi_{b^\l_2} \\ \l=m_*, \ldots, m-1 }} 
\frac{| \xi_{r^{(j)}}\xi_{b^{m_*}}  |^2 }{ | \xi_{b^{m_*}}  \xi_{b^{m_*}_1}  \xi_{b^{m_*}_2} |^2 } 
\prod_{\l=m_*+1}^{m -1 } \frac{|\xi_{b^\l}|^2 }{ \jb{\xi_{b^\l} \xi_{b^\l_1} \xi_{b^\l_2}}^{2} } 
\notag \\
&\quad 
\le C^j
\bigg(\prod_{\l = m_*+1}^{m-1}(\gf(b^\l)+2)! \bigg)^{-8\ta}
K^{-8(m-m_*-1)\ta}
\intt_{\substack{\xi_{b^{m_*}} = \xi_{b^{m_*}_1} + \xi_{b^{m_*}_2}\\|\xi_{b^{m_*}_2}| \ges 1}} 
\frac{ d\xi_{b^{m_*}_2} }{ |  \xi_{b^{m_*}_2} |^2 } 
\label{RE7a}\\
& \hphantom{XXX} \intt_{\xi_{b^{m_* +1}} = \xi_{b^{m_* +1}_1} + \xi_{b^{m_*+1}_2}} 
\frac{ |\xi_{b^{m_*+1}}|^2d\xi_{b^{m_*+1}_1} }{ \jb{\xi_{b^{m_*+1 }} 
\xi_{b^{m_*+1}_1} \xi_{b^{m_*+1}_2}}^{2(1-\ta)} }
\notag \\
& \hphantom{XXX}
 \cdots
\intt_{\xi_{b^{m-1}} = \xi_{b^{m-1}_1} + \xi_{b^{m-1}_2}} 
\frac{|\xi_{b^{m-1}}|^2d\xi_{b^{m-1}_1}  }{ \jb{\xi_{b^{m-1 }} \xi_{b^{m-1}_1} \xi_{b^{m-1}_2}}^{2(1-\ta)} }, 
\notag 
\end{align}

\noi
provided that $\ta > 0$, 
where the integrals on the right-hand side 
are nested.

To evaluate the inner $m-m_*-1$ integrals on $\xi_{b^\l} = \xi_{b^{\l}_1} + \xi_{b^{\l}_2}$, 
$\l=m_*+1, \ldots, m-1$, 
we consider $\xi_{b^\l}$ as fixed (by the outer integrals), and 
perform a change of variables from $\xi_{b^\l_1}$ to $\ze = \xi_{b^\l} \xi_{b^\l_1} (\xi_{b^\l} - \xi_{b^\l_1})$
as in \eqref{X5}. 
Note that we have 
\[|\partial_{\xi_{b^\l_1}} \ze| = |\xi_{b^\l} (\xi_{b^\l} -2  \xi_{b^\l_1})| 
= |\xi_{b^\l} (\xi_{b^\l_2} -  \xi_{b^\l_1})|\sim |\xi_{b^\l}|^2.\] 

\noi
Thus, we have 
\begin{align}\label{RE9}
\intt_{\xi_{b^\l} = \xi_{b^\l_1} + \xi_{b^\l_2} } 
\frac{|\xi_{b^\l}|^2 }{ \jb{\xi_{b^\l} \xi_{b^\l_1} \xi_{b^\l_2}}^{2(1-\ta)} } 
d\xi_{b^\l_1}
\sim 
\intt \frac{1}{\jb{\ze}^{2(1-\ta)}} d\ze \les 1
\end{align}

\noi
for $\l=m_*+1, \ldots, m-1$, 
provided that $0 <  \ta <  \frac12$. 
Hence, from \eqref{RE7a} and \eqref{RE9}, 
we obtain
\begin{align}
\begin{split}
&
\intt_{\substack{\xi_{b^\l} = \xi_{b^\l_1} + \xi_{b^\l_2} \\ \l=m_*, \ldots, m-1 }}
|\xi_{r^{(j)}}|^2 \prod_{\l=m_*}^{m-1 } \frac{|\xi_{b^\l}|^2 }{ \jb{\xi_{b^\l} \xi_{b^\l_1} \xi_{b^\l_2}}^{2}} 
\\
& \hphantom{XXXXX}
\le C^j
\bigg(\prod_{\l = m_*+1}^{m-1}(\gf(b^\l)+2)! \bigg)^{-8\ta}
K^{-8(m-m_*-1)\ta}.
\end{split}
\label{RE10}
\end{align}

Next, we consider the case $m_* = 1$.
In this case, 
 the contribution to 
 \eqref{RE4} 
 from the 
 $m-1$ integrals on $\xi_{b^\l} = \xi_{b^\l_1} + \xi_{b^\l_2}$, 
$\l=1, \ldots, m -1$, 
is given by 
\begin{align*}
&
\intt_{\substack{\xi_{b^\l} = \xi_{b^\l_1} + \xi_{b^\l_2} \\ \l=1, \ldots, m-1 }}
\Big(\ind_{|\xi^{(1)}|\ge \frac{1}{2}}  
+  \ind_{|\xi^{(1)}|<  \frac{1}{2}}\cdot 
|\xi^{(1)}|^2\Big)
%
|\xi_{r^{(j)}}|^2  \frac{\prod_{\l=2}^{m-1 } |\xi_{b^\l}|^2 }{\prod_{\l=1}^{m-1 } \jb{\xi_{b^\l} \xi_{b^\l_1} \xi_{b^\l_2}}^{2}} \\
& \hphantom{XXX}
\les \intt_{\substack{\xi_{b^\l} = \xi_{b^\l_1} + \xi_{b^\l_2} \\ \l=1, \ldots, m-1 }}
|\xi_{r^{(j)}}|^2 
\prod_{\l=m_*}^{m-1 }
 \frac{ |\xi_{b^\l}|^2 }{ \jb{\xi_{b^\l} \xi_{b^\l_1} \xi_{b^\l_2}}^{2}} 
\end{align*}

\noi
to which we can apply
 \eqref{RE7a} and \eqref{RE9}, 
 thus yielding \eqref{RE10} in this case.

\medskip

\noi
$\bul$ \textbf{Case (ii.3):} $|\xi_{b^\l_2}| \ll 1 $, $\l=1, \ldots, m-1$
\\
\indent
As in \eqref{RE6}, we have
\begin{align}\label{RE11}
|\xi_{b^{\l}_2} | & \ll 1 \les | \xi_{b^{\l+1}} | = |\xi_{b^{\l}_1}| \sim |\xi_{b^{\l}}| , \quad \l=1, \ldots, m-1, 
\end{align}

\noi
in this case.
In particular, we have
\begin{align}
|\xi_{r^{(j)}}| = |\xi_{b^m}| \le C^{j} |\xi_{b^1}|.
\label{RE12}
\end{align}

\noi
Then, 
from \eqref{RE11}, \eqref{RE12}, 
and changes of variables as in \eqref{RE9}, 
we can bound 
 the $m-1$ integrals on $\xi_{b^\l} = \xi_{b^\l_1} + \xi_{b^\l_2}$, 
$\l=1, \ldots, m -1$, 
in~\eqref{RE4} by 
\begin{align}
&
\intt_{\substack{\xi_{b^\l} = \xi_{b^\l_1} + \xi_{b^\l_2} \\ \l=1, \ldots, m-1 }}
|\xi_{r^{(j)}}|^2  
\frac{\prod_{\l=2}^{ k }|\xi_{b^\l}|^2 }{ \prod_{\l=1}^{ k } \jb{\xi_{b^\l} \xi_{b^\l_1} \xi_{b^\l_2}}^{2} }
\notag \\
&
\quad 
\le C^j
\bigg(\prod_{\l = 1}^{m-1}(\gf(b^\l)+2)! \bigg)^{-8\ta}
K^{-8(m-1)\ta}
\intt_{\substack{\xi_{b^\l} = \xi_{b^\l_1} + \xi_{b^\l_2} \\ \l=1, \ldots, m-1 }}
\prod_{\l=1}^{ k }\frac{|\xi_{b^\l}|^2 }{  \jb{\xi_{b^\l} \xi_{b^\l_1} \xi_{b^\l_2}}^{2(1-\ta)} } 
\label{RE13}\\
&\quad 
\le C^j
\bigg(\prod_{\l = 1}^{m-1}(\gf(b^\l)+2)! \bigg)^{-8\ta}
K^{-8(m-1)\ta}
\notag 
\end{align}

\noi
provided that $0 < \ta < \frac 12$.

\medskip

We note that  those integrals in \eqref{RE4}
which have not been treated in Cases (ii.1)-(ii.3)
are
 of the form 
 \eqref{X3}
 which are bounded in \eqref{X4}
and \eqref{X5}.
Therefore, from 
\eqref{RE3}, \eqref{RE4}, 
 Cases (ii.1)-(ii.3) (\eqref{RE5}, \eqref{RE10}, and \eqref{RE13}),  
  \eqref{X4} and \eqref{X5}
with \eqref{TJ0}, 
we obtain
\begin{align*}
\|  \wt \NN_2^{(j)} (\uu) \|_{L^\infty_\xi}
\le 
C^j 
\bigg(\prod_{k = 1}^{j-1} (k+2)!\bigg)^{-4\ta}
K^{-4(j-1) \ta} 
\|\ft \uu\|_{L^2_\xi}^{j+1}\too 0, 
\end{align*}

\noi
as $j \to \infty$, 
provided that $0 < \ta < \frac 12$.
This proves
\eqref{NX3}.

\medskip

\noi
(iii)
As in 
the periodic case presented in 
Section \ref{SEC:per}, 
the bound \eqref{NX6}
follows
from a modification from the argument in Part (ii), 
using 

\smallskip

\begin{itemize}
\item
 \eqref{PE9}
when 
 $\max\big(|\xi^{(j)}_1|, |\xi^{(j)}_2|\big) \le \dl^{-\frac 25 +\eps_0}$,

\smallskip

\item
\eqref{PE7c}
when 
 $\max\big(|\xi^{(j)}_1|, |\xi^{(j)}_2|\big) > \dl^{-\frac 25 +\eps_0}$.

\end{itemize}

\smallskip

\noi
Since the required modification is exactly the same
as in the periodic case, 
we omit details.
\end{proof}

\begin{ackno}\rm
A.C.~was supported by  CNRS-INSMI through a grant ``PEPS Jeunes chercheurs et jeunes chercheuses 2025''.
G.L. was supported by the NSFC (grant no.~12501181).
T.O.~was supported by the European Research Council (grant no.~864138 ``SingStochDispDyn").
T.Z.~was supported by the National Natural Science Foundation of China (grant no.~12271051 and 12371239).

\end{ackno}

\end{document}